\documentclass{amsart}

\usepackage{amsthm,amsfonts,amsmath,amssymb,latexsym,epsfig}
\usepackage{upref,amssymb,eucal,ae,enumitem,wasysym}
\usepackage[all,cmtip]{xy}

\newtheorem{theorem}{Theorem}

\newtheorem{corollary}[theorem]{Corollary}

\newenvironment{remark}{\medskip \refstepcounter{theorem}
\noindent  {\bf Remark \thetheorem}.\rm}{\,}

\def\rk3{\rm K3}

\def\<{\langle}
\def\>{\rangle}

\def\tn{\tilde{n}}
\def\tg{\tilde{g}}
\def\mb#1{{\mathbb #1}}
\def\mc#1{{\mathcal #1}}
\def\mf#1{{\mathfrak #1}}
\def\BOne{{\mathchoice {\rm 1\mskip-4mu l} {\rm 1\mskip-4mu l}
                          {\rm 1\mskip-4.5mu l} {\rm 1\mskip-5mu l}}}

\begin{document}
\title[Manifolds, model geometries, and volume related invariants]
{Closed manifolds, model geometries, and volume related differentiable 
invariants}

\begin{abstract}
By analyzing smooth paths of metrics on $M^n$ through their associated paths of
isotopic isometric embeddings into a standard sphere of large dimension, and 
the homotopy lift by Yamabe metrics of the associated path of conformal 
classes, we prove that if $M$ carries a metric $g$ of constant scalar 
curvature $s_g$ and Ricci tensor $r_g \leq 0$, and if $M$ does not carry 
scalar flat metrics other than Ricci flat ones, then $M$ 
is not a manifold of Kazdan-Warner type I, and if the space of Ricci flat 
metrics is not empty, $M$ is a manifold of Kazdan-Warner type II, while 
otherwise, $M$ is of Kazdan-Warner type III. 
If $M$ has a contractible universal cover and carries no Ricci flat 
metrics at all, $M$ is of Kazdan-Warner type III. 
If $M$ carries a metric $g'$ of 
nontrivial scalar curvature  $s_{g'}\geq 0$, and an Einstein metric $g_{-}$ 
such that $r_{g_{-}}<0$, then $M$ must carry both, scalar flat non Ricci flat 
and Ricci flat metrics, and if orientable, it is spinnable. No such manifold 
exists if $n\leq 3$, and if the Ricci tensor $r_{g'}$ is assumed further to be 
positive, no such manifold exists if $n\leq 4$, and in these dimensions, 
$M^{n}$ can admit Einstein metrics of scalar curvature of at most one sign.
If $[0,1]\ni t \rightarrow f_{g_t}:(M,g_t) \rightarrow (
\mb{S}^{\tn},\tg)$ is a smooth path of {\it differentiable} isometric 
embeddings of a smooth path $[0,1]\ni t \rightarrow g_{t}$ of differentiable
Yamabe metrics on $M$,  
the sectional curvature of $(f_{g_t}(M),g_t)$ is constant at some $t$ 
if, and only if, it is constant for all $t$, and the sign of the constant
functions is unique. 
A manifold $M$ with a model geometry 
$({\rm Isom}(X,g),X)$ is of the form $M=X/\Gamma_M$ where $\Gamma_M$ is a
discrete subgroup of ${\rm Isom}(X,g)$ that acts without fixed points, and has 
its differentiable structure and a distinguished metric $g_M$  
determined by the model $(X,g)$ and fundamental group $\Gamma_M$. 
We show that properties of $\Gamma_M$, and the isometric embedding 
$f_{g_M}: (M,g_{M}) \rightarrow (\mb{S}^{\tn},\tg)$ and its isotopic 
deformations, lead to the determination of the Kazdan-Warner type of $M$
and sigma invariant of its model induced differentiable structure. 
We compute $\sigma(M)$ of such $M$ in several specific cases with 
standard model geometries of Thurston.  Notably, we show that a manifold 
$M^n=\mb{H}^n/\Gamma_M$ of hyperbolic model $(\mb{H}^n,g_{\mb{H}^n})$ is of 
Kazdan-Warner 
type III, that if $n\geq 3$ its $\Gamma_M$ invariant  
hyperbolic metric $g_M$ and class realize its sigma invariant, 
and that the space of hyperbolic metrics on $M$ is path connected and 
consists of isotopic deformations $f_{g_t}$ of $f_{g_0}:=f_{g_M}$ of 
equal volume 
metrics $g_t$ of constant sectional curvature $-1$, with 
$(M,g_t)$ isometric to $(M,g_M)$ for all $t$, while $3$d nil, solv, 
$\widetilde{\mb{P}\mb{S}\mb{L}}(2,\mb{R})$ and $\mb{R}\times \mb{H}^2$ 
manifolds are all of Kazdan-Warner type III also, but have 
vanishing nonachievable sigma invariant. 
\end{abstract}
\author{Santiago R. Simanca}
\email{srsimanca@gmail.com}
\maketitle

\section{Introduction}
The group ${\rm Isom}(X,g)$ of isometries of a Riemannian manifold $(X^n,g)$ 
is a Lie group of dimension at most $n(n+1)/2$, with the upper bound 
achieved only if $g$ is of constant sectional curvature, and the 
isotropy subgroup ${\rm Isom}_p(X,g)$ of any $p\in X$ is closed \cite{mest}. 
If and when ${\rm Isom}(X,g)$ acts transitively on $X$, we 
may recast $(X,g)$ as the orbit space ${\rm Isom}\, (X,g)/{\rm Isom}_p (X,g)$ 
provided with its natural metric, exhibiting $(X,g)$ as a Riemannian
manifold that metrically looks the same about any point $p$ in it. We then 
say that $(X,g)$ is {\it homogeneous}. A complete Riemannian manifold $(M,g')$ 
is said to be {\it modeled} on a homogeneous manifold $(X,g)$ if every 
point of $M$ has a neighborhood that is isometric to an open set of the model 
$(X,g)$. We say then that $(M,g')$ as {\it locally homogeneous}. If
$(X,g)$ is the universal cover of $M$, and $g$ is the lift of $g'$, then 
$(X,g)$ is a homogeneous Riemannian manifold, $(M,g')$ is 
modeled on it, so the local geometry of $(M,g')$ is encoded in the 
geometry of $(X,g)$, and by completeness, the developing map is a covering 
map, and $M$ is of the form $X/\Gamma$ where $\Gamma$ is a discrete subgroup 
of ${\rm Isom}(X,g)$ acting without fixed points.  If ${\rm Isom}(X,g)$ 
is not contained in any larger group of diffeomorphism of $X$ with compact 
point stabilizers, we say that $({\rm Isom}(X,g),X)$ is a model geometry 
defining the geometric structure of the locally homogeneous manifold 
$(M,g')$. Conversely, a pair $({\rm Isom}(X,g),X)$ of this type is said 
to be a model geometry if there exists at least one compact manifold 
$M$ modeled on it, in which case, $M=X/\Gamma_M$ where $\Gamma_M$ is a 
discrete subgroup of ${\rm Isom}(X,g)$ acting on $X$ without fixed points, and 
the metric $g$ on $X$ passes to a metric $g_M$ on $M$ that is 
invariant under the action of $\Gamma_M \cong \pi_1(M)$.  
The model $(X,g)$ and $\Gamma_M$ define the differentiable structure 
of $M$, and together, yield a distinguished element $g_M$ in the cone 
$\mc{M}(M)$ of smooth metrics on it.

In Thurston's pioneering approach, compact manifolds of dimension $n$ are to be 
``classified'' by decomposing each of them into ``locally homogeneous pieces'' 
of finite volume modeled by suitably specified geometric models of the said 
dimension. If $n>3$, such an ambitious goal 
is difficult to achieve without additional restricting conditions, but it is 
straightforward if $n=1$, it follows from 
classical results if $n=2$, and Thurston's executes it for $n=3$, describing 
eight homogeneous manifolds admitting actions by discrete subgroups of quotient 
$3$d manifolds of finite volume, and proving the bulk of his geometrization 
theorem (TGT), originally \cite[Conjecture 1.1]{thurs0},   
that the interior of a compact 3d manifold have a canonical decomposition into 
pieces with geometric structures modeled by them (the proof completed 
after Perelman's solution \cite{per1,per2,per3}
of the subsumed elliptic case, originally asked for 
by Poincar\'e), the pieces here being the prime 
factors in the connected sum decomposition of the manifold, suitably cut 
when needed along 2-tori, with the resulting portions of 
finite volume modeled by one of the said eight geometries. One of our main 
purposes here is to show that if $M$ is a closed locally homogeneous 3d 
manifold modeled by anyone of the eight Thurston's 3d 
model geometries $({\rm Isom}(X^3,g),X^3)$,
so for a discrete subgroup $\Gamma_M$ of 
${\rm Isom}(X^3,g)$, we have that  
$M=X/\Gamma_M$, and this $M$ carries a $\Gamma_M$ invariant 
metric $g_M$, the data given by 
$\Gamma_M$ and $g_M$ suffice for the determination of the sigma invariant of 
$M$ and diffeomorphism type as a smooth $3$d manifold. In more generality, 
we use Thurston's vision restricted to the study of closed manifolds $M^n$ with
model geometries. Given $M$, we view the cone of smooth metrics $\mc{M}(M^n)$ 
and their deformations through the set 
of isometric embeddings of its elements into a standard sphere 
$(\mb{S}^{\tn=\tn(n)},\tg)$ and their Palais isotopic deformations, with the 
space $\mc{C}(M)$ of conformal classes corresponding to the set of classes of 
metrics whose embeddings can be deformed into each other in the said background 
by conformal isotopies. We characterize special metrics on these 
modeled $M$s through properties of their isometric embeddings, and 
in several relevant cases, show that the isometric embedding of $g_M$ itself,
or suitable isotopic deformations of it, provides $(M,\Gamma_M)$  
with an optimal geometry encoding a differentiable invariant
of the manifold, and often a canonical conformal class of metrics that 
realize it, and whose additional isometric invariants are not always, even in
dimension three, determined uniquely by the fundamental group.

If $f_g: (M^n,g) \rightarrow (\mb{S}^{\tn},\tg)$ is the isometric embedding
of a metric $g$ of scalar curvature $s_g$, we have that
$$
s_g = n(n-1) +\| H_{f_g}\|^2 -\| \alpha_{f_g}\|^2 \, ,  
$$
where $n(n-1)=\sum K^{\tg}(e_i,e_j)$, $H_{f_g}$ and $\alpha_{f_g}$ are the 
exterior scalar curvature, mean curvature vector and second fundamental form 
of $f_g$, respectively. (If $n\geq 2$, by the existence of volume preserving 
deformations of isometric embeddings of $g$, it follows that
$\| H_{f_g}\|^2$ is a constant function on $f_g(M)$ \cite[Theorem 6]{sim5}.)
These three summands are the densities of 
extrinsic functionals $\Theta_{f_g}(M)$, $\Psi_{f_g}(M)$ 
and $\Pi_{f_g}(M)$ such that, if
$\mc{W}_{f_g}(M) := (1-\delta_{n,1})(n/n-1)\Theta_{f_g}(M)+ \Phi_{f_g}(M)$ 
and $\mc{D}_{f_g}(M) := (1-\delta_{n,1})(1/n-1)\Theta_{f_g}(M)+ \Pi_{f_g}(M)$
(by abuse of notation we take $(1-\delta_{n,1})/(n-1)$ as $0$ when
$n=1$), we have that
$$ 
{\displaystyle \mc{S}_g(M)=\int s_g d\mu_g } = {\displaystyle  
\Theta_{f_g}(M)  
+ \Psi_{f_g}(M)  
- \Pi_{f_g}(M) } 
= \mc{W}_{f_g}(M) 
-\mc{D}_{f_g}(M) 
\, .  
$$ 
Both $\mc{W}_{f_g}(M)$ and $\mc{D}_{f_g}(M)$ are well-defined functionals 
in the space $\mc{M}_{[g]}(M)$ of metrics in a conformal class 
$[g] \in \mc{C}(M)$, unlike $\Theta_{f_g}(M)$, $\Psi_{f_g}(M)$ and 
$\Pi_{f_g}(M)$ that are so only in the special case when $M$ is a 
manifold that admits a single conformal class of metrics on it.
The sigma invariant of $M$ is associated to 
conformal classes for which there exists simultaneous minimizers $f_g$ of 
$\mc{W}_{f_g}(M)$ and $\mc{D}_{f_g}(M)$ with $f_g$ minimal, and $g$ 
a metric of constant scalar curvature. If and when it is realized, 
$\sigma(M)$ is determined by a geometrically optimal such class. 

If $n=1$, $M^n$  is topologically a circle, it carries one 
conformal class of metrics $[g]$ all of whose representatives $e^{2u}g$ are 
flat, and an embedding $f_g$ 
minimizes $\mc{W}_{f_g}(M)$ and is minimal if, and only if, 
$f_g(M) \subset \mb{S}^{\tn}$ is a geodesic circle, 
and $\mu_g(M)=2\pi$. We then define $\mc{W}(M,[g])=2\pi = \mc{D}(M,[g])$, and 
$\sigma(M)=0=\mc{W}(M,[g]) - \mc{D}(M,[g])$.
For $n\geq 2$, we instead define $\mc{W}(M,[g])=\inf_{g\in \mc{M}_{[g]}(M)}
\mc{W}_{f_g}(M)$ and $\mc{D}(M,[g])=\inf_{g\in \mc{M}_{[g]}(M)}
\mc{D}_{f_g}(M)$, respectively. 
 If $n=2$, $\mc{W}_{f_g}(M)$ and $\mc{D}_{f_g}(M)$ are 
conformally invariant functionals, $\mc{W}_{f_g}(M)=\mc{W}(M,[g])$ 
and $\mc{D}_{f_g}(M)=\mc{D}(M,[g])$,
and by the Gauss-Bonnet theorem, we have that
$4\pi \chi(M)=\mc{W}_{f_g}(M)-\mc{D}_{f_g}(M)$. 
In each conformal class $[g]$ there is always a metric $g$ with 
minimal $f_g$ such that $\mc{W}_{f_g}(M)=\mc{W}(M,[g])$, and if necessary, 
this $f_g$ can be conformally deformed by area preserving minimal embeddings 
to the isometric embedding of an Einstein metric \cite[Theorem 1]{sim8}. 
A surface $M$ of topological genus $k$ carries a distinguished 
conformal class $[g_k]\in \mc{C}(M)$ such that
$\mc{W}_{f_g}(M)=\mc{W}(M,[g]) \geq \mc{W}(M,[g_k])$ 
\cite[Theorems 1 \& 9]{sim2}, and the invariant  
$\sigma(M):= 8\pi \chi(M)/\mc{W}(M,[g_k])^{\frac{1}{2}}$   
can be taken to be realized by an optimal minimal $f_{g_k}$ of a metric $g_k$
of constant scalar curvature, even when $\chi(M)=0$, in which case any other 
metric $g$, minimizer or not of $[g]\rightarrow \mc{W}_{f_g}(M)=\mc{W}(M,[g])$, 
realizes the said value as well. 
If $n\geq 3$, the Yamabe functional is defined by the scale normalized total
scalar curvature 
\begin{equation}  \label{yafu} 
\lambda(M,g)= \frac{1}{\mu_g(M)^{\frac{n-2}{n}}} \left( \mc{S}_g(M) = 
\Theta_{f_g}(M)  + \Psi_{f_g}(M) - 
\Pi_{f_g}(M) 
=\mc{W}_{f_g}(M) -\mc{D}_{f_g}(M)\right) \, ,  
\end{equation} 
and its critical points over metrics of fixed volume in a conformal class
are metrics of constant scalar curvature. The minimizers in a class $[g]$ 
are the Yamabe metrics in $\mc{M}_{[g]}(M)$, and the critical value     
$$
\lambda(M^n,[g])= \inf_{g\in \mc{M}_{[g]}(M)} \lambda(M,g) 
$$
is a conformal invariant of $[g]\in \mc{C}(M)$
that is always realized by a $g\in \mc{M}_{[g]}(M)$, hence by any 
scale deformation $c^2g$ of it. The optimal scale of a Yamabe metric $g$ may 
be found by observing that $g$ is then either Einstein or scalar flat,  
in which case, if necessary, it can be dilated by the factor
\begin{equation} \label{minf}
c^2_{min}=c^2_{min}(g)=\left( 1 + \frac{1}{n^2}\| H_{f_g} \|^2 \right)
\end{equation}
so that the embedding $f_{c_{min}^2 g}: (M,c_{min}^2 g) \rightarrow
(\mb{S}^{\tn},\tg)$ is minimal, or otherwise, $s_g$ is constant, and $f_g$ is
outright a minimal embedding that  
minimizes both, $\mc{W}_{f_g}(M)$ and $\mc{D}_{f_g}(M)$.  
\cite[\S 2]{sim7}, \cite[Theorem 3]{sim6}. 
In any of these three possible situations, if $g$ is the Yamabe metric in 
$\mc{M}_{[g]}(M)$ of minimal $f_{g}$, which is unique up to volume preserving 
conformal isometries and therefore, fixes the canonical scale of Yamabe 
metrics in the class, 
we have that 
$$
\lambda(M,[g])=s_{g}\mu_{g}(M)^{\frac{2}{n}} =
\frac{1}{\mu_g(M)^{\frac{n-2}{n}}}\left( \mc{W}_{f_g}(M)=\mc{W}(M,[g])-
\mc{D}_{f_g}(M)=\mc{D}(M,[g]\right) \, . 
$$ 
We have Aubin's universal bound \cite{au}
\begin{equation} \label{aub}
\lambda(M^n,[g]) 
\leq \lambda(\mb{S}^n, \tg)=n(n-1)\omega_n^{\frac{2}{n}} =:\sigma(\mb{S}^n) \, ,
\end{equation}
and as the concept was originally introduced by Schoen \cite{sc2}, 
the sigma invariant of $M$ is given by 
$$
\sigma(M^n)= \sup_{[g]\in \mc{C}(M)}\lambda(M,[g]) \leq
\sigma(\mb{S}^n)\, .  
$$
Its realizability now is a more complicated issue, since knowledge of a 
particular Yamabe metric on $M$ is far from being enough information to 
determine even the Kazdan-Warner type of the manifold, let alone its 
$\sigma$ invariant. However, the bundle $\mc{M}(M) \rightarrow \mc{C}(M)$ has a 
homotopy lifting property by Yamabe metrics \cite[Theorem 3]{sim6}, and we 
can exploit
the special nature of the isometric embeddings of paths of these metrics 
to show that, under suitable geometric conditions on $M$, deformations of
metrics of this type across conformal classes is sufficiently obstructed in
a way that allows us to describe firstly the Kazdan-Warner type of the
manifold, mimicking the situation for surfaces and circle, and then attempt 
to compute the sigma invariant of some manifolds with suitable model 
geometries on the basis of the insight so gained. 
We cast the former type of results we obtain into statements that are 
applicable in general, subsumming in them the properties we already know of 
in dimensions one and two.

If $g'$ is a metric on $M$ of nontrivial scalar curvature $s_{g'}\geq 0$, then
there exists a metric $g\in [g']$ of constant positive scalar curvature, 
$M$ is a manifold of Kazdan-Warner type I, and $\sigma(M)>0$. Suppose now 
that $g$ is a metric on $M^n$ of constant scalar 
curvature $s_g$ and Ricci tensor $r_g\leq 0$, and that if $n\geq 3$, $M$ is 
a manifold that carries no scalar flat metrics other than Ricci flat ones, 
if any. We then prove that $M$ cannot be of Kazdan-Warner type I, and if the
space of Ricci flat metrics on it is not empty, $M$ is of Kazdan-Warner type 
II, while otherwise, it is of Kazdan-Warner type III.
It follows that  if $M$ is a manifold with contractible universal cover that 
carries no Ricci flat metric at all, $M$ must be of Kazdan-Warner type III. 
We can then show that if a manifold $M^n$ 
carries a metric $g'$ of nontrivial $s_{g'}\geq 0$, and an Einstein metric 
$g_{-}$ of $r_{g_{-}}<0$, then $M$ must carry both, scalar flat non Ricci flat 
and Ricci flat metrics, and if orientable, it is spinnable, this last result 
following by using a Ricci flat metric to compute a metric representative of 
$w_2(M)$ and show that it vanishes in cohomology.  No such manifold exists if 
$n\leq 3$, and if $g'$ is assumed further to be such that $r_{g'}>0$, no such 
manifold exists 
if $n\leq 4$, and an $M^n$ of these dimensions can admit Einstein metrics of 
scalar curvature of at most one sign. For then the existence of Einstein 
metrics of 
different signs forces the condition $n>3$, as otherwise, $M$ would be 
diffeomorphic to closed space forms of different curvatures. If $n=4$, the 
condition $r_{g'}>0$,  
makes it possible to apply Meyer's theorem, and provide the orientable universal
cover with a metric of positive scalar curvature, which by
Lichnerowicz theorem would have to be of zero signature, and an Einstein 
metric of negative Ricci curvature. By Freedman's classification theorem in the 
topological category, this cover would then have to be either 
$\mb{S}^4$, or a connected sum $\#_{i=1}^k \mb{S}^2 \times \mb{S}^2$, with 
differentiable structures compatible with that of $M$, and carrying 
a Ricci flat metric that we then use conveniently to define an almost complex 
structure on it, thus excluding from consideration all but $\mb{S}^2 \times 
\mb{S}^2$, as the other possibilities are manifolds obstructed to carry 
almost complex structures by the integrality of the arithmetic genus. The 
remaining case of 
$\mb{S}^2\times \mb{S}^2$ has a unique differentiable structure, and 
on it, we can explicitly produce an Einstein metric of positive scalar 
curvature, but this differentiable structure is not compatible with 
the differentiable structure of the homeomorphic manifold that
carries the Ricci flat metric on it,
since it would be then impossible to produce a path of Yamabe 
metrics connecting the Einstein metric of positive scalar curvature with it, 
contradicting the homotopy lifting property by Yamabe metrics of the
bundle $\mc{M}(M) \rightarrow \mc{C}(M)$. On the basis of this result, 
we show that if $M^{n\leq 4}$ carries Einstein metrics at all, the sign of 
the scalar curvatures of the set of these metric on $M$ is unique. 
On the other hand,  smooth paths of 
differentiable isometric embeddings $[0,1]\ni t \rightarrow f_{g_t}: (M^n,g_t) 
\rightarrow ( \mb{S}^{\tn},\tg)$ of smooth paths $t\rightarrow g_t$ of 
differentiable Yamabe metrics of isometry group of maximal dimension each  
are quite special. For we have then that
$t\rightarrow g_t \rightarrow s_{g_t}$ is differentiable, and then 
that the  sectional curvature of $(f_{g_{t}}(M), \tg\mid_{f_{g_{t}}(M)})$ 
is constant for some $t$ if, and only if, it is constant for all $t$, in which 
case, the sign of the constant functions in the differentiable path of 
sectional curvatures is unique. 
This generalizes what happens in dimensions one and two, 
with the proper extension of the concepts we have introduced earlier. For 
smooth deformations of a geodesic in $(\mb{S}^{\tn},\tg)$ are necessarily 
of intrinsic vanishing scalar curvatures, while smooth deformations across 
conformal classes of metrics of constant scalar curvature on a surface $M$ 
maintain the sign of the constant,
which is determined by that of $\chi(M)$, and where the differentiability of 
the path of scalar curvatures follows as an application of the  
extreme case of the Cauchy Schwarz inequality. 

A general principle for the computation of $\sigma(M)$ 
begins by the observation that scalar flat metrics on an $M$ of Kazdan-Warner 
type II are Ricci flat, any of them realizes the invariant, and that there is 
one such $g$ that is optimal in that $f_{g}$ is minimal, and 
$\mc{W}_{f_g}(M) \leq \mc{W}_{f_{g'}}(M) = 
\mc{D}_{f_{g'}}(M) \geq  \mc{D}_{f_{g}}(M)$ among all the Ricci flat metrics 
$g'$ on $M$, thus extending to all $n$s what happens with this type of manifolds
in 1 and 2d \cite[Theorem 1]{sim7}. When it can be found explicitly, 
 this $g$ defines a 
canonical volume $\mu_g(M)$ and smallest value of $\| \alpha_{f_g}\|^2$ 
among all Ricci 
flat metrics on the manifold, instances of which we have illustrated 
already for each 
of the ten manifolds of Euclidean 3d model geometry \cite[Theorem 5]{sim7}.  
The situation for manifolds of Kazdan-Warner
type I and III is markedly different, and even among those whose invariant is
achieved, it is not always the case that the achievable conformal class
carries Einstein metrics in it \cite[Corollary 3]{sim4}, and an       
asymmetric distinction between manifolds of type I and III is reflected
in the differentiability of the scalar curvature function on the lift by 
constant scalar curvature metrics of a smooth path of conformal classes, 
where if the metrics are of nonpositive scalar curvature, the scalar 
curvature varies differentiably across classes, in contrast with the 
positive alternative where if $n\geq 3$ and the lift is by Yamabe metrics, 
only its continuity is always ensured \cite[Theorem 3]{sim6}. 
In the type I case, we may use Aubin's bound (\ref{aub}) and a continuity
based method to compare equal volume Yamabe metrics of positive
scalar curvature, by which we can often identify a Yamabe metric $g$ of 
sufficiently large volume and minimal $f_g$ so that no other 
equal volume Yamabe metric has scalar curvature larger than $s_g$, and so
$\lambda(M,[g])=\sigma(M)$, e.g. \cite[Theorems 2 \& 5]{sim4}. We have applied 
this method, and the explicit description of all finite subgroups of 
$\mb{S}\mb{O}(4)$, to find the value of the invariant, realizing class, 
and canonical volume of its representative with minimal isometric embedding
for all elliptic 3d manifolds as well, the canonical volume then an explicit 
function of $|\pi_1(M)|$ \cite[Theorem 6]{sim7}. We extend this result here 
to Lens spaces of any dimension. On a manifold $M^{n\geq 3}$ of hyperbolic 
model $(H^n,g_{H^n})$, 
we use the Mostow rigidity theorem, and the rigidity of differentiable paths
of homotopy lifts by Yamabe metrics of constant volume metrics that start at 
a metric of constant sectional curvature, 
to show that the hyperbolic metric $g_M$ on $M=H^n/\Gamma_M$ and 
its conformal class realize $\sigma(M)=-n(n-1) \mu_{g_M}(M)^{\frac{2}{n}}$, 
that on any class $[g] \in \mc{C}(M)$ there are metrics of negative constant 
sectional curvature representatives, and that the space of hyperbolic metrics 
on this $M$ 
is path connected and consists of isotopic deformations of $g_M$ by 
equal volume hyperbolic metrics that are all isometric to each other, but since 
the structure of the associated subgroup $\Gamma_M \subset {\rm Isom}(H^n,
g_{H^n})$ is not now as explicit as in the previous cases, we cannot determine 
the volume of a canonical metric of constant sectional curvature with 
minimal isometric embedding, hence, we are unable to 
determine explicitly the canonical volume of $M$. 
By the uniqueness of the differentiable structure of a manifold of dimension 
$n\leq 3$, when $n=3$, it follows that ${\rm Diff}_0(M)$ is contractible, a 
particular case, with a different proof,  
of D. Gabai's \cite[Theorem 7.3]{gab} for a hyperbolic 3d 
manifold of finite volume, and which does not extend to $n\geq 4$, as
for instance, if $n=4$, then the set $\pi_0({\rm Diff}_0(M))$ is infinitely 
generated and there are 
hyperbolic metrics in different path components of ${\rm Diff}_0(M)$ 
\cite{buga} \footnotemark. 
We further discuss the invariant $\sigma(M)$ for 3d manifolds with any 
of the remaining five model geometries of Thurston, and in some 
cases, their $n$ dimensional versions as well.   
\footnotetext{We thank D. Gabai for pointing out references \cite{gab,buga} to 
us in response to a question of ours.}
 
We organize the article as follows: In \S2, we prove our technical results 
of earlier, characterizing the Kazdan-Warner type of a manifold under 
suitable geometric assumptions on it, and proving that any $M^{n\leq 4}$ can 
carry Einstein metrics of at most one sign, if at all. 
We show here also that smooth paths of differentiable isometric embeddings
of smooth paths of differentiable Yamabe metrics can be of constant sectional 
curvature at some value of the path parameter if, and only if, they are so 
for all such. 
In \S3, we first describe the standard geometric models $({\rm Isom}(X^n,g),
X^n)$ of Thurston in full dimensional generality, and then discuss the $\sigma$ 
invariant of a few manifolds with these model geometries, in particular, the 3d 
manifolds with these geometric models in Thurston's TGT.
We show that the results of \S2 immediately lead to the 
conclusion that the hyperbolic metric and class of a hyperbolic $n$ manifold 
$M$ of fundamental group $\Gamma_M$ realize its sigma invariant, and that 
the space of hyperbolic metrics on $M$ is path connected. We also prove
then that Heisenberg group nilmanifolds are of 
Kazdan-Warner type III with nonrealizable vanishing $\sigma$ invariant, 
properties that are shared by 3d solv, $\widetilde{\mb{P}\mb{S}\mb{L}}(2,
\mb{R})$ and $\mb{R}\times \mb{H}^2$ manifolds also. 
We end in \S4 by computing the 
$\sigma^{J_0}(M)$ invariant of several 4d almost complex manifolds $(M,J_0)$, 
comparing it in each case with the invariant $\sigma(M)$. 
This illustrate some subtleties arising  
in the classification of manifolds as we transition in dimension
to $n=2m \geq 4$, where the appearance of nontrivial $(M,J_0)$s  
play a distinctive role, even when the a priori fixed $M$ has
a geometric model that is the product of lower dimensional ones, and 
carries Einstein metrics.

\section{Constant scalar curvature metrics and Kazdan-Warner type}
\begin{theorem} \label{thty}
Suppose that $g$ is a metric on $M^n$ of constant scalar curvature $s_g$ and
Ricci tensor $r_g \leq 0$. If $n\geq 3$, assume further 
that any scalar flat metric on
$M$ is Ricci flat. Then $M$ is not a manifold of Kazdan-Warner type 
{\rm I}, and if the space of Ricci flat metrics on $M$ is not empty, $M$ is a 
manifold of Kazdan-Warner type {\rm II}, while otherwise, $M$ is a manifold of 
Kazdan-Warner type {\rm III}.   
\end{theorem}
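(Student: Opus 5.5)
The plan is to reduce everything to one claim: under the hypotheses, $M$ carries no metric of positive scalar curvature. The type II/III dichotomy then follows directly from the definitions of the Kazdan-Warner types.

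\textbf{The cases $n=1,2$.} These are handled as in the Introduction. For $n=1$, $M$ is a circle whose metrics are all flat, hence Ricci flat, so $M$ is of type II. For $n=2$, $r_g\leq 0$ with $s_g$ constant forces $s_g\leq 0$. Gauss-Bonnet, $4\pi\chi(M)=\int s_g\,d\mu_g$, then gives $\chi(M)\leq 0$. So $M$ carries no metric of positive scalar curvature, since such a metric would give $\chi(M)>0$. It carries a flat metric exactly when $\chi(M)=0$, and for a surface flat is the same as Ricci flat. This gives types II and III in the two cases.

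\textbf{The case $n\geq 3$: setting up a contradiction.} I argue by contradiction and suppose $g'$ is a metric with $s_{g'}>0$. It suffices to treat $s_{g'}>0$, because a metric of nontrivial $s_{g'}\geq 0$ can be conformally changed to one with positive constant scalar curvature. Then:
\begin{itemize}
\item Since $r_g\leq 0$ and $s_g$ is constant, $s_g\leq 0$, so $\lambda(M,[g])\leq 0$. On the other hand $\lambda(M,[g'])>0$.
\item I join $[g]$ to $[g']$ by a smooth path of conformal classes, for instance the classes of $(1-t)g+tg'$.
\item I lift this path by Yamabe metrics $g_t$, using the homotopy lifting property of \cite[Theorem 3]{sim6}. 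I normalize the scale so that each $f_{g_t}$ is minimal, via the factor (\ref{minf}).
\item The function $t\mapsto\lambda(M,[g_t])$ is continuous, so there is a first parameter $t_0$ with $\lambda(M,[g_{t_0}])=0$. The Yamabe metric $g_{t_0}$ is then scalar flat, and by hypothesis it is Ricci flat.
\end{itemize}
So the path must cross from $\lambda\leq 0$ to $\lambda>0$ through a Ricci flat metric. The remaining task is to show that such a crossing cannot occur.

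\textbf{Ruling out the crossing: the main obstacle.} My first approach uses two facts.
\begin{itemize}
\item On $[0,t_0]$ the Yamabe metrics have nonpositive scalar curvature. By \cite[Theorem 3]{sim6}, $s_{g_t}$ is therefore differentiable in $t$ there, and on the positive side it is at least continuous.
\item At the Ricci flat metric $g_{t_0}$, the first variation of the total scalar curvature vanishes in every direction, because $d\mc{S}_g(h)=\int\langle \tfrac12 s_g g-r_g,h\rangle\,d\mu_g=0$ when $s_g=0=r_g$.
\end{itemize}
I would then study the second variation at $g_{t_0}$, restricted to the slice of volume-normalized Yamabe metrics. For $t$ slightly larger than $t_0$, each $g_t$ is a Yamabe metric of positive constant scalar curvature near a Ricci flat one, and I expect a contradiction from either of two routes.
\begin{itemize}
\item Koiso-type local rigidity: near a Ricci flat metric, every constant scalar curvature metric in a Yamabe-minimizing slice has $s\leq 0$. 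This is the standard fact that Ricci flat metrics are local maxima of the Yamabe invariant modulo the trace-free TT directions.
\item Alternatively, a perturbation of $g_t$ for $t>t_0$ to a scalar flat metric that is not Ricci flat, which the hypothesis forbids.
\end{itemize}
Making this step rigorous, and in particular controlling the direction of the crossing, is where I expect the real difficulty.

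\textbf{Conclusion.} Once no metric of positive scalar curvature exists, $M$ is not of type I. If a Ricci flat metric exists, $M$ carries a scalar flat metric but no metric of positive scalar curvature, so it is of type II. If no Ricci flat metric exists, then by hypothesis there is no scalar flat metric either. In that case no metric has nontrivial $s\geq 0$, so $M$ is of type III.
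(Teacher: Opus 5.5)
\textbf{Where the proposal has a gap.} Your reduction to ``no metric of positive scalar curvature'' and your set-up for $n\geq 3$ match the paper's. You lift a path of classes by Yamabe metrics and find a scalar flat, hence Ricci flat, Yamabe metric where $\lambda$ turns positive. The proposal then stops at the step that carries the content, so as written it is not a proof. The paper handles this step by rescaling to minimal embeddings and arguing from (\ref{veq})--(\ref{veqp}) that $\lambda(M,\tilde g^Y_t)$ is stationary just to the right of the crossing. You offer no replacement that works.

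The vanishing of $d\mc{S}$ at a Ricci flat metric only says that metric is critical; it says nothing about the direction of the crossing. Your first route is not a standard fact. At a Ricci flat $g_0$ and a TT tensor $h$, the second derivative at $\epsilon=0$ of $\lambda(M,[g_0+\epsilon h])$ is a negative multiple of $\int\langle \Delta_L h,h\rangle\, d\mu_{g_0}$. So if $g_0$ were linearly unstable, classes with $\lambda>0$ would accumulate at $[g_0]$. Local maximality of the Yamabe invariant at a Ricci flat metric is known under linear stability together with integrability of infinitesimal Ricci flat deformations (Dai--Wang--Wei for parallel spinors, Kr\"oncke in general). Whether every compact Ricci flat metric is stable is open, and nothing in the hypotheses supplies it. Also, any local argument needs $t_0=\sup\{t:\lambda(M,[g_t])\le 0\}$ rather than the first zero, since after the first zero the path may return to negative values.

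\textbf{How the second route could be completed.} Your second route is the one that can be made to work. It amounts to a converse of Bourguignon's deformation theorem: a type I manifold of dimension $n\geq 3$ carries a scalar flat metric that is not Ricci flat. Three ingredients are needed.
\begin{enumerate}
\item In a class with $\lambda=0$ the scalar flat metric is unique up to scale. So the class contains a Ricci flat metric if and only if its Yamabe metric is Ricci flat.
\item By Ebin's slice theorem and Koiso's result that nearby Ricci flat metrics form a finite-dimensional real-analytic set in the slice, conformal classes of Ricci flat metrics have infinite codimension in $\mc{C}(M)$. Hence a path from a class with $\lambda<0$ (these exist for every $n\geq 3$) to one with $\lambda>0$ can be perturbed to avoid them.
\item By continuity of $\lambda$, the perturbed path meets $\lambda=0$ in a class whose scalar flat representative is not Ricci flat, contradicting the hypothesis.
\end{enumerate}
Filled in this way, you would have an intrinsic proof quite different from the paper's extrinsic one. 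For $n\geq 3$ it would not even use the metric $g$ with $r_g\leq 0$. Without these steps, the central part of the argument is missing.
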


{\it Proof}. If $n<3$, the statement holds by the definition of 
Kazdan-Warner type for manifolds of such dimensions \cite[\S 2]{sim6}. 
So we proceed assuming that $n\geq 3$. 
Any metric of nonpositive constant scalar curvature is
a Yamabe metric in its conformal class. We prove firstly that  
the hypothesis on the scalar flat metrics of $M^{n}$ imply that $M$ does not 
carry metrics of positive scalar curvature, and therefore, it is not 
a manifold of Kazdan-Warner type I. 
 
For if $M^{n}$ carries a metric of nonnegative scalar curvature 
that is positive somewhere, then $M$ carries Yamabe metrics of constant 
positive scalar curvature. We take $g'$ to be one such metric, and consider
a path $[0,1]\ni t \rightarrow g_t$ of metrics connecting
$g$ to $g'$, and lift of the path of classes 
$t \rightarrow [g_t]$ to an associated path $t\rightarrow g^Y_t$ of 
$\mu_{g_t}$ volume Yamabe metrics in $[g_t]$
such that $g^Y_0=g$ and $g^Y_1=g'$ \cite[Theorem 3]{sim6}. By the continuity 
of $t \rightarrow \lambda(M,[g^Y_t])=\lambda(M,g^Y_t)$ and the intermediate 
value theorem, there exists a first $\bar{t} \in [0,1)$ and an $\varepsilon >0$
such that $[0,\bar{t}+\varepsilon)\subset [0,1]$, 
$(0,\bar{t})\ni t \rightarrow s_{g_t^Y} \leq 0$,  $s_{g_{\bar{t}}^Y}=0$, 
and $(\bar{t},\bar{t}+\varepsilon) \ni t \rightarrow s_{g_t^Y}> 0$
and $(\bar{t},\bar{t}+\varepsilon) \ni t \rightarrow s_{g_t^Y}/(n-1) \not
\in {\rm Spec}\, \Delta^{g^Y_t}$. 

By the assumed hypothesis on $M$, the
metric $g^Y_{\bar{t}}$ is Ricci flat.   
If necessary, we homothetically deform 
the Yamabe metric $g$ by  a factor $c^2=c^2_{min}(g)$ in (\ref{minf}) so that 
the isometric embedding 
$f_{c^2 g}: (M,g) \rightarrow (\mb{S}^{\tn}, \tg)$ is minimal, and consider 
the rescaled 
path $t\rightarrow \tilde{g}_t^Y:=(\mu_{c^2g}/\mu_{g_t^Y})^{\frac{2}{n}} g_t^Y$ 
of constant $\mu_{c^2g}$ volume Yamabe metrics in $[g_t^Y]$ 
connecting $\tilde{g}^Y_0=c^2 g$ with $\tilde{g}^Y_1=
(\mu_{c^2g}/\mu_{g_t^Y})^{\frac{2}{n}}g'$,   
path of isometric embeddings $t\rightarrow f_{\tilde{g}_t^Y}$, and
corresponding paths of functionals $t\rightarrow \lambda(M^n,f_{\tilde{
g}_t^Y})$, $t\rightarrow \mc{W}_{f_{\tilde{g}_t^Y}}(M^n)$, 
$t\rightarrow \mc{D}_{f_{\tilde{g}_t^Y}}(M^n)$,
$t\rightarrow \Phi_{f_{\tilde{g}_t^Y}}(M^n)$, and  
$t\rightarrow \Pi_{f_{\tilde{g}_t^Y}}(M^n)$,
respectively. By construction we have that
$(0,\bar{t})\ni t \rightarrow s_{\tilde{g}_t^Y} \leq 0$, $s_{\tilde{g}_{
\bar{t}}^Y}=0$, $\tilde{g}^Y_{\bar{t}}$ is Ricci flat, and   
$(\bar{t},\bar{t}+\varepsilon) \ni t \rightarrow s_{\tilde{g}_t^Y}> 0$
and $(\bar{t},\bar{t}+\varepsilon) \ni t \rightarrow s_{\tilde{g}_t^Y}/(n-1) 
\not \in {\rm Spec}\, \Delta^{\tilde{g}^Y_t}$ so $(0,\bar{t}+\varepsilon)
\ni t \rightarrow s_{g_t^Y}$ is differentiable. 

Over the interval $(0,\bar{t}+\varepsilon)$, the functionals 
$t\rightarrow \lambda(M,\tilde{g}_t^Y)$, $t\rightarrow \Psi_{f_{\tilde{g}_t^Y}
}(M)$ and $t\rightarrow \Pi_{f_{\tilde{g}_t^Y}}(M)$ are differentiable, and 
since the variations are by constant volume metrics, by (\ref{yafu}) 
we obtain that 
\begin{equation} \label{veq}
\begin{array}{rcl}
{\displaystyle \frac{d}{dt} \lambda(M,\tilde{g}^Y_t) =- 
\frac{1}{\mu_{c^2g}^{\frac{2}{N}}} \int (r_{\tilde{g}_t^Y},h_t)_{\tilde{g}_t^Y}
d\mu_{\tilde{g}_t^Y}} & = & {\displaystyle \frac{1}{\mu_{c^2g}^{\frac{2}{N}}}
\left( \frac{d}{dt} \Psi_{f_{\tilde{g}^Y_t}}(M) - \frac{d}{dt} 
\Pi_{f_{\tilde{g}^Y_t}}(M) \right) } \vspace{1mm} \\ & = & {\displaystyle  
\frac{1}{\mu_{c^2g}^{\frac{2}{N}}}\frac{d}{dt}\left( 
\mc{W}_{f_{\tilde{g}_t^Y}}(M) - \mc{D}_{f_{\tilde{g}_t^Y}}(M)\right) } \, , 
\end{array}
\end{equation}
where $N=2n/(n-2)$, $r_{\tilde{g}_t^Y}$ is the Ricci tensor of 
$\tilde{g}_t^Y$, and  $h_t=\dot{\tilde{g}}^Y_t$ is a symmetric tensor of zero 
$\tilde{g}^Y_t$-trace.
As $\tilde{g}_t^Y$ moves across conformal classes, this 
expression implies that $s_{\tilde{g}_t^Y}$ can become positive to the 
immediate right of $\bar{t}$ through Yamabe metrics such that $(\bar{t}, 
\bar{t}+\varepsilon)\ni t \rightarrow f_{\tilde{g}_t^Y}$ is a minimal minimizer 
of both, $\mc{W}_{f_g}$ and $\mc{D}_{f_g}$. Otherwise, $(\bar{t},
\bar{t}+\varepsilon) \ni 
t \rightarrow \tilde{g}_t^Y$ would have to be a path of Einstein metrics of 
positive scalar curvature, and by continuity, a path of such metrics on 
$[\bar{t},\bar{t}+\varepsilon)$, but then the left side of (\ref{veq})  
would imply that 
$\lambda(M,\tilde{g}_t^Y)$, which has value $0$ at $t=\bar{t}$, has vanishing
differential, and so vanishes identically on 
$[\bar{t},\bar{t}+\varepsilon)$, as must then do $s_{\tilde{g}_t^Y}$, 
contradicting its positivity. 
By using now the right side of (\ref{veq}) instead, we show that  
this minimality leads to the same contradiction.

Indeed, since the variation is by Yamabe metrics, the functionals 
$\Psi_{f_{\tilde{g}_t^Y}}(M)$ and $\Pi_{f_{ \tilde{g}_t^Y}}(M)$ are each 
stationary in the directions of the normal bundle $\nu(f_{\tilde{g}_t^Y}(M))$ 
of the submanifold $f_{\tilde{g}_t^Y}(M)$ 
inside the background sphere $\mb{S}^{\tn}$, so if we let $T=df_{\tilde{g}_t^Y}
(\partial_t)=T^{\tau}+T^{\nu}$ be the decomposition of the variational vector 
field of the embedding $f_{\tilde{g}_t^Y}$ into tangential and normal 
components, their variations vanish in the direction of $T^{\nu}$, 
and so they equal their variations along $T^{\tau}$. By \cite[Theorems 3.1 \& 
3.2]{gracie}, these variations are thus given by
\begin{equation} \label{grvea}
\begin{array}{rcl}
{\displaystyle \frac{d \Pi_{f_{\tilde{g}_t^Y}}(M)}{dt} } & = & {\displaystyle
\int_{f_{\tilde{g}_t^Y}(M)} 2 \< \nabla^{\tg}_{e_j}\nabla_{e_i}^{\tg}
\alpha_{f_{\tilde{g}_t^Y}}(e_i,e_j),T^{\tau}\> d\mu_{\tilde{g}_t^Y} } \, , 
\vspace{1mm}\\
& & -{\displaystyle \int_{f_{\tilde{g}_t^Y}(M)} 
2(e_i\< T^{\tau},e_l\> +e_l\<T^{\tau},e_i\>) \< \alpha_{f_{\tilde{g}_t^Y}}
(e_i, e_j), \alpha_{f_{\tilde{g}_t^Y}}(e_l,e_j)\> d\mu_{\tilde{g}_t^Y} } \, , 
\end{array}
\end{equation}
and 
\begin{equation} \label{grvep}
\begin{array}{rcl}
{\displaystyle \frac{d \Psi_{f_{\tilde{g}_t^Y}}(M)}{dt} } & = & {\displaystyle
\int_{f_{\tilde{g}_t^Y}(M)} 2 \< \nabla_{e_i}^{\tg}\nabla_{e_i}^{\tg}H_{f_{
\tilde{g}g_t^Y}}, T^{\tau}\> d\mu_{\tilde{g}_t^Y} } \, , \vspace{1mm} \\
& & -2{\displaystyle \int_{f_{\tilde{g}_t^Y}(M)} (e_i\< T^{\tau},e_j\> +
e_j\<T^{\tau}, e_i\>) \< \alpha_{f_{\tilde{g}_t^Y}} (e_i, e_j),H_{f_{
\tilde{g}_t^Y}} \>d\mu_{\tilde{g}_t^Y} }\, , 
\end{array}
\end{equation}
respectively, expressions in which $\{ e_i=e_i^t\}$ is an orthonormal 
$\tilde{g}_t^Y$ frame in a sufficiently small neighborhood of the integral 
curve of $T$ through the point $(p,t)$ where the densities of the integrals are 
computed, at which in addition we have that $[T,e_i]\mid_{(p,t)}=0=
\nabla^{g_t^Y}_{e_i}e_j\mid_{(p,t)}$. 
 
By (\ref{grvep}) and the minimality of $(\bar{t},\bar{t}+\varepsilon)\ni t 
\rightarrow f_{\tilde{g}_t^Y}$, we obtain that
\begin{equation} \label{veqh}
(\bar{t},\bar{t}+\varepsilon) \ni t \rightarrow 
{\displaystyle \frac{d \Psi_{f_{\tilde{g}_t^Y}}(M)}{dt} } = 0 \, .
\end{equation}
On the other hand, 
by Codazzi's equation, we have that  
$$
\nabla_{e_i}^{\tg}\alpha_{f_{\tilde{g}_t^Y}}(e_i,e_j)=(R^{\tg}(e_i,e_j)e_i)^{
\nu}+ \nabla^{\tg}_{e_j} H_{f_{\tilde{g}_t^Y}}+ A_{H_{f_{\tilde{g}_t^Y}}}e_j-
A_{\alpha_{f_{\tilde{g}_t^Y}} (e_j,e_i)}e_i \, , 
$$
where $A$ is the shape operator of the embedding, and we obtain that
$$
\begin{array}{rcl}
{\displaystyle \int_{f_{\tilde{g}_t^Y}(M)} \< \nabla^{\tg}_{e_j}\nabla_{e_i}^{
\tg} \alpha_{f_{\tilde{g}_t^Y}}(e_i,e_j),T^{\tau}\> d\mu_{\tilde{g}_t^Y} } & = 
& {\displaystyle  \int_{f_{\tilde{g}_t^Y}(M)} \left( \< H_{f_{\tilde{g}_t^Y}}, 
\nabla^{\tg}_{e_j}\nabla_{e_i}^{\tg} T^{\tau}\>
- \< A_{H_{f_{\tilde{g}_t^Y}}}e_j, \alpha_{f_{\tilde{g}_t^Y}}(e_i,e_j),
T^{\tau}\>\right) d\mu_{\tilde{g}_t^Y}} \\   &  & {\displaystyle +\int_{f_{
\tilde{g}_t^Y}(M)} \< \alpha_{f_{\tilde{g}_t^Y}}(e_i,e_j), \alpha_{f_{
\tilde{g}_t^Y}}(e_i,e_l)\> e_j\< e_l,T^{\tau}\> d\mu_{\tilde{g}_t^Y} } \, .    
\end{array}
$$
Thus, by (\ref{grvea}) and the minimality of $f_{\tilde{g}_t^Y}$, we 
conclude that 
\begin{equation} \label{veqp}
(\bar{t},\bar{t}+\varepsilon) \ni t \rightarrow
{\displaystyle \frac{d \Pi_{f_{\tilde{g}_t^Y}}(M)}{dt} 
= -2 \int_{f_{\tilde{g}_t^Y}(M)} \< \alpha_{f_{\tilde{g}_t^Y}}(e_i,e_j),
\alpha_{f_{\tilde{g}_t^Y}}(e_i,e_l)\> e_l\< e_j,T^{\tau}\> d\mu_{\tilde{g}_t^Y}
} = 0\, , 
\end{equation}
the last vanishing equality because at the point where the density of the 
integral 
is computed, the tangential frame $\{ e_i\}$ is taken to be 
geodesic and all the $e_i$s 
commute with $T$. 
By (\ref{veq}), (\ref{veqh}) and (\ref{veqp}), $\lambda(M,\tilde{g}_t^Y)$ is 
stationary over the interval $(\bar{t},\bar{t}+\varepsilon)$, and therefore, 
identically zero on $[\bar{t},\bar{t}+\varepsilon)$, contradicting the fact 
that $s_{\tilde{g}_t^Y}>0$ for $t \in (\bar{t},\bar{t}+\varepsilon)$. Hence,  
$s_{\tilde{g}_t^Y}$ can never become positive on the interval $[0,1]$, which 
contradicts the fact that $s_{\tilde{g}_1^Y}$ is positive, and this conclusion
for $\tilde{g}_t^Y$ implies the same for the unnormalized $g_t^Y$. This proves 
that $M$ does not carry metrics of nonnegative scalar curvature other than 
possibly Ricci flat metrics, and so $M$ is not a manifold of Kazdan-Warner
type I.

If the space of Ricci flat metrics on $M$ is not empty, any such metric
would have zero scalar curvature and conformal class with zero Yamabe 
invariant. Hence, $M$, which is not of Kazdan-Warner type I, must be then of 
Kazdan-Warner type II. If otherwise the said space is empty, the manifold
does not carry zero scalar curvature metrics, and $M$, which is 
not of Kazdan-Warner type I, is not of Kazdan-Warner type II either. So 
$M$ is then a manifold of Kazdan-Warner type III. 
\qed

\begin{corollary} \label{co3}
If $M^n$ is a closed manifold with contractible universal cover
that carries no Ricci flat metric at all, then $M$ does not carry scalar
flat metrics either, and it is an $n$d manifold of Kazdan-Warner type 
{\rm III}.
\end{corollary}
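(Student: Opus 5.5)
The plan is to reduce the statement to Theorem \ref{thty}, applied to a metric of constant scalar curvature and nonpositive Ricci tensor on $M$. First I would show directly that $M$ carries no scalar flat metric at all. Suppose $g_0$ were a scalar flat metric on $M$. Since $M$ has no Ricci flat metrics, $g_0$ is not Ricci flat. By the classical Bourguignon/Kazdan--Warner argument, a scalar flat, non Ricci flat metric can be deformed in the direction $-r_{g_0}$ so that the total scalar curvature increases to first order. Concretely, along $g_0-\epsilon r_{g_0}$ the first variation of $\int s\,d\mu$ is $\int |r_{g_0}|^2 d\mu_{g_0}>0$. The first eigenvalue of the conformal Laplacian therefore becomes positive, and $M$ would carry a metric of positive scalar curvature.

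Next I would use the contractibility of the universal cover $\tilde M$, so that $M$ is aspherical. I would invoke the known obstruction that aspherical closed manifolds carry no metrics of positive scalar curvature: Gromov--Lawson and Schoen--Yau in the enlargeable or low-dimensional cases, and more generally the theorems of Chodosh--Li and Gromov. This contradicts the previous step, so $M$ has no scalar flat metrics, which is the first assertion. It also shows that $M$ carries no metric of nonnegative nontrivial scalar curvature, so $M$ is not of Kazdan--Warner type I.

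For the type classification, I would note two facts. First, the hypothesis of Theorem \ref{thty} on scalar flat metrics holds vacuously. Second, every conformal class then has negative Yamabe invariant, since a class of invariant $\ge 0$ contains a metric of constant scalar curvature $\ge 0$. This is excluded either by the non-type-I conclusion or by the absence of scalar flat metrics. Hence a Yamabe metric $g$ in any class has constant negative scalar curvature. To literally invoke Theorem \ref{thty} one would also need $r_g\le 0$, which is not automatic. I would therefore argue directly from the Kazdan--Warner trichotomy: type I requires a positive scalar curvature metric, type II requires a scalar flat metric and no positive one, and both are excluded. So $M$ is of type III, and this agrees with the last clause of Theorem \ref{thty}, whose conclusion in the Ricci flat empty case this reproduces.

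The main obstacle is the asphericity step, that is, showing that a closed manifold with contractible universal cover admits no positive scalar curvature metric in all dimensions. This is a deep input. I would first try to cite it, falling back to the enlargeable case (Gromov--Lawson) if a fully general citation is unavailable. The deformation argument in the first step is routine.
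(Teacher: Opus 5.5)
\textbf{There is a genuine gap in your Step 2.} Your Step 1 and your final bookkeeping are fine. The deformation along $-r_{g_0}$ does show that a scalar flat, non Ricci flat metric yields a metric of positive scalar curvature (psc). This is the same classical fact the paper imports to rule out type II. Your passage from ``no psc, no scalar flat'' to type III via the Yamabe problem is correct, and so is your remark that Theorem \ref{thty} is not literally applicable.

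Step 2 asserts that a closed manifold with contractible universal cover carries no psc metric. That is the Gromov--Lawson/Schoen--Yau aspherical conjecture. It is known for $n\le 3$ (Gauss--Bonnet; Schoen--Yau or Gromov--Lawson together with geometrization), for $n=4,5$ (Chodosh--Li, Gromov), and for enlargeable manifolds. It is open in general for $n\geq 6$, so there is nothing to cite. The enlargeable fallback does not reach the general case, because it is not known that closed aspherical manifolds are enlargeable. As written, you have proved the corollary only for $n\le 5$ and for special classes such as enlargeable $M$.

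This gap is not an artifact of your route. By Cheeger--Gromoll, a Ricci flat metric on a closed manifold with contractible universal cover is flat, so the hypothesis just says $M$ admits no flat metric. By your Step 1, together with the Kazdan--Warner fact that a manifold carrying a psc metric also carries scalar flat metrics, the conclusion of the corollary is equivalent to $M$ carrying no psc metric. Flat manifolds are already known to carry no psc metric. Hence the corollary in all dimensions is equivalent to the aspherical conjecture itself.

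The paper's own argument for $n\geq 3$ is of a different nature. It takes a Lohkamp metric with $r_g<0$, joins it by Yamabe metrics to a putative psc Yamabe metric, and asserts via (\ref{veq})--(\ref{veqp}) that $\lambda$ is stationary just after the last zero of the scalar curvature. That argument never uses the contractibility of $\tilde M$ when $n\ge 3$. It would apply word for word to $\mathbb{S}^3$, which has no Ricci flat metric yet carries the round metric. So it cannot be sound, and it does not supply your missing input either.
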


{\it Proof}. The hypothesis implies that $n>1$. By \cite[Theorem 1]{sim7}, 
$M$ cannot be of Kazdan-Warner type II. 
If $n=2$, $M$ must be of type III, as the type is then determined by
the sign of $\chi(M)$, and the surfaces for which $\chi(M)>0$ do not have
contractible universal covers. If $n\geq 3$, there exists a metric 
$g$ of constant scalar curvature and negative Ricci tensor $r_g <0$
\cite{lohk}, so
$g$ is a Yamabe metric in its conformal class.  If $g'$ were
a Yamabe metric of positive scalar curvature, and $t \rightarrow g_t^Y$ 
a path of Yamabe metrics connecting $g$ to $g'$, we let $\bar{t}$ the  
last time this path reaches a metric of zero scalar curvature, rescale 
$g_{\bar{t}}^Y$ by the factor $c^2=c^2_{min}(g^Y_{\bar{t}})$ in (\ref{minf}) 
so that $f_{c^2 g^Y_{\bar{t}}}$ is minimal, and consider the path
$[0, 1]\ni t \rightarrow \tilde{g}_t^Y:=(\mu_{c^2g_{\bar{t}^Y}}/\mu_{g_t^Y})^{
\frac{2}{n}} g_t^Y$ of constant valume Yamabe metrics instead. By 
construction, 
for some $\varepsilon >0$, the path restricted to
$[\bar{t},\bar{t}+\varepsilon]$ consists of constant volume non Ricci flat
metrics of minimal embedding $f_{\tilde{g}_t^Y}$ each, 
$s_{\tilde{g}_{ \bar{t}}^Y}=0$, 
$(\bar{t},\bar{t}+\varepsilon) \ni t \rightarrow s_{\tilde{g}_t^Y}> 0$
and $(\bar{t},\bar{t}+\varepsilon) \ni t \rightarrow s_{\tilde{g}_t^Y}/(n-1) 
\not \in {\rm Spec}\, \Delta^{\tilde{g}^Y_t}$ so $(\bar{t},\bar{t}+\varepsilon)
\ni t \rightarrow s_{g_t^Y}$ is differentiable. By using the right side
of (\ref{veq}) as above, we then prove that $\lambda(M,\tilde{g}_t^Y)$, which 
vanishes at $t=\bar{t}$, is stationary on $(\bar{t},\bar{t}+\varepsilon)$,
so identically zero on $[\bar{t},\bar{t}+\varepsilon]$,     
 contradicting its positive value at $\bar{t}+\varepsilon$.
Thus, there are no Yamabe metrics $g'$ on $M$ with $s_{g'}>0$, and 
$M$ is neither of Kazdan-Warner type I nor of Kazdan-Warner type II.
\qed

\begin{theorem} \label{th2}
If $M^n$ carries a metric $g'$ of nontrivial scalar curvature $s_{g'}\geq 0$,
and an Einstein metric $g_{-}$ of Ricci tensor $r_{g_{-}}<0$, then $M$ 
carries both, scalar flat non Ricci flat and Ricci flat metrics, and if 
orientable, it is spinnable. No such manifold exists if $n\leq 3$, and if 
$g'$ is assumed to be such that $r_{g'}>0$, no such manifold exists if 
$n\leq 4$, and
in these dimensions, $M$ can admit Einstein metrics of scalar curvatures  
of at most one sign.
\end{theorem}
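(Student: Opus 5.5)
The plan is to read the statement as a contrapositive companion of Theorem \ref{thty}, and then to exploit low-dimensional rigidity. Since $g'$ has nontrivial $s_{g'}\geq 0$, the manifold $M$ carries a Yamabe metric of constant positive scalar curvature in $[g']$, so $M$ is of Kazdan-Warner type I. The Einstein metric $g_{-}$ has constant scalar curvature and $r_{g_{-}}<0$, so it satisfies the curvature hypotheses of Theorem \ref{thty}. If $n\leq 2$, that theorem applies with no extra hypothesis and already yields a contradiction; hence no such $M$ exists for $n\leq 2$. For $n\geq 3$, the only way out is that the remaining hypothesis of Theorem \ref{thty} fails. Thus $M$ carries a scalar flat metric that is not Ricci flat.

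To produce a Ricci flat metric as well, I will rerun the path argument from the proof of Theorem \ref{thty}. Take a path $g_t$ from $g_{-}$ to $g'$ and lift $t\rightarrow[g_t]$ by Yamabe metrics using \cite[Theorem 3]{sim6}. Let $\bar t$ be the first time the scalar curvature vanishes. Rescale by $c^2_{min}$ so that the embeddings are minimal, and use (\ref{veq}), (\ref{veqh}) and (\ref{veqp}). If $\tilde g^Y_{\bar t}$ were not Ricci flat, I would try to follow the path to the right of $\bar t$ and show that the crossing into positive scalar curvature is still forced to be stationary. I expect this to fail, so the crossing metric must be Ricci flat.

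For spinnability in the orientable case, I will use the Ricci flat metric $g_0$. The plan is to build a Chern-Weil type representative of $w_2(M)$ from the curvature of $g_0$, via its reduction to the holonomy group, and show that it is exact. The cleanest route is when the holonomy is special, and I would try that case first.

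The dimensional exclusions come next. For $n=3$, a Ricci flat metric is flat, so $M$ would be a Bieberbach manifold. But $g_{-}$ is hyperbolic, so $\pi_1(M)$ would be both virtually abelian and a cocompact hyperbolic lattice, which is impossible. Together with the case $n\leq 2$, this excludes $n\leq 3$.

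For $n=4$ with $r_{g'}>0$, Myers' theorem makes $\pi_1(M)$ finite, so the orientable universal cover $\tilde M$ is compact. It carries a lifted metric of positive scalar curvature, and it is spin by the previous step. Lichnerowicz then gives $\hat A=0$, so the signature of $\tilde M$ is zero. Since $\tilde M$ is simply connected, spin and of zero signature, Freedman's theorem says $\tilde M$ is homeomorphic to $\mb{S}^4$ or to $\#_k\mb{S}^2\times\mb{S}^2$. The lifted Ricci flat metric would then be used to produce an almost complex structure. Integrality of the arithmetic genus $(\chi+\tau)/4$ excludes every case except $\mb{S}^2\times\mb{S}^2$. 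In that last case, the product Einstein metric of positive scalar curvature and the lifted Ricci flat metric cannot be joined by a path of Yamabe metrics, again by the stationarity argument above. This contradicts the homotopy lifting of \cite[Theorem 3]{sim6}.

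Finally, Einstein metrics of both signs on $M^{n\leq 4}$ would supply a $g'$ with $r_{g'}>0$ together with $g_{-}$, so the previous paragraphs show that at most one sign occurs. I expect the main obstacles to be the existence of the Ricci flat metric and the almost complex and spin constructions from it, since these rely on the Yamabe-path stationarity argument rather than on standard results.
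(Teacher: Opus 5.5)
Your reduction to scalar flat non Ricci flat metrics via the contrapositive of Theorem \ref{thty}, your treatment of $n\leq 2$, the space form argument for $n=3$, and the $n=4$ outline (Myers, Lichnerowicz, Freedman, an almost complex structure, then $\mb{S}^2\times\mb{S}^2$) all follow the paper. The genuine gap is the existence of the Ricci flat metric, on which both your $n=3$ case and your $\mb{S}^2\times\mb{S}^2$ step rest. You give no argument for it, only the intention to push past $\bar t$, and that is not the side the paper uses.

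The paper works on $[\bar t-\varepsilon,\bar t]$. A scalar flat Einstein metric is Ricci flat, so $r_{g^Y_{\bar t}}\neq 0$ makes the Yamabe metrics just before $\bar t$ non Einstein. The paper then dilates by $c^2_{min}(g^Y_{\bar t})$, normalizes the volume, and applies the stationarity argument from the proof of Theorem \ref{thty}. This gives $\lambda(M,\tilde g^Y_{\bar t-\varepsilon})=\lambda(M,\tilde g^Y_{\bar t})=0$, against $\lambda<0$ before $\bar t$. A right-sided version cannot single out Ricci flatness, because it does not depend on it: Theorem \ref{thty} runs exactly that argument when $\tilde g^Y_{\bar t}$ is Ricci flat.

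Be aware also that neither version can be sound as it stands. It uses neither the topology of $M$ nor the Einstein condition on $g_{-}$. On $\mb{S}^3$, which carries metrics of constant negative scalar curvature besides the round one, the left-sided argument would produce a Ricci flat, hence flat, metric. The right-sided one would forbid $\lambda$ from crossing $0$ at all. For $n=3$ you can bypass all of this: $g_{-}$ is then hyperbolic, and a closed hyperbolic $3$-manifold carries no metric of positive scalar curvature, whereas $[g']$ contains one.

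Your spin step is not viable as planned either. A real Chern--Weil form cannot detect the $\mb{Z}/2$ class $w_2$, and a holonomy reduction only settles the special holonomy cases. The paper instead feeds the Ricci flat metric into the representative of $w_2$ of \cite[\S 3.2, Theorem 2]{sidc}. In fact no argument can give spinnability in this generality. Take a smooth sextic threefold $X\subset\mb{P}^4(\mb{C})$, with $h$ the hyperplane class. It is simply connected with $H^2(X;\mb{Z})=\mb{Z}h$ and $c_1(X)=-h$, so it is orientable and not spin. Yet it carries a K\"ahler--Einstein metric with $r<0$ by Aubin--Yau, and a metric of positive scalar curvature by Gromov--Lawson, since it is simply connected, non spin, and of real dimension $6\geq 5$.

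Two further points concern the $n=4$ part. First, integrality of $(\chi+\tau)/4=(1+k)/2$ excludes $\#_k\mb{S}^2\times\mb{S}^2$ (with $k=0$ giving $\mb{S}^4$) only for even $k$. For every odd $k$ these manifolds are almost complex, so this is a gap you share with the paper. Second, for the final assertion, the paper also rules out, when $n=4$, a Ricci flat metric coexisting with a positive or a negative Einstein metric. The negative case goes through the Cheeger--Gromoll splitting, reducing to $T^4$ or a K3 surface. Your last paragraph is confined to the positive/negative pair and does not address either case.
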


{\it Proof}. If $g'$ is not a Yamabe metric, we let $g_{+}$ be a Yamabe 
metric in the conformal class $[g']$, while otherwise, we set 
$g_{+}=g'$. so we have that $s_{g_{+}}$ is a positive constant. The existence of
$g_{+}$ and $g_{-}$ already implies that $n>2$. We let $[0,1]\ni t 
\rightarrow g_t$ be any path of metrics connecting $g_{-}$ to $g_{+}$, and 
consider the lift of the path of conformal classes $t \rightarrow [g_t]$ to a 
path $t\rightarrow g_t^Y$ of Yamabe metrics in $[g_t]$ such that $\mu_{g_t}=
\mu_{g^Y_t}$ connecting $g_{-}$ and $g_{+}$. By the continuity of 
$t \rightarrow \lambda(M,[g^Y_t])=\lambda(M,g^Y_t)$ and the intermediate 
value theorem, there exists a first $\bar{t} \in (0,1)$ such that 
$\lambda(M,g^Y_{\bar{t}}) =0$, so $g^Y_{\bar{t}}$ is scalar flat, 
and  $[0,\bar{t}) \ni t \rightarrow s_{g_t^Y}<0$. We prove first that 
$g_{\bar{t}}^Y$ must be Ricci flat.  

Suppose that $r_{g^Y_{\bar{t}}}\neq 0$. Then there exists 
$\varepsilon>0$ such that $[\bar{t}-\varepsilon , \bar{t}]\subset [0,\bar{t}]$,
and the restricted path $[\bar{t}-\varepsilon , \bar{t}]\ni t \rightarrow 
g_t^Y$ consists of non Einstein Yamabe metrics connecting 
$g_{\bar{t}-\varepsilon}^Y$ to $g_{\bar{t}}^Y$. We consider the dilation
factor $c^2$ in (\ref{minf}) so that 
$f_{c^2g_{\bar{t}}^Y}$ is minimal, and normalize the original path of Yamabe
metrics to $\tilde{g}_t^Y= (\mu_{c^2g_{\bar{t}}^Y}/\mu_{g_{t}^Y})^{\frac{2}{n}} 
g_t^Y$. Then $[0,\bar{t}]\ni t \rightarrow \tilde{g}_{t}^Y$ is a 
differentiable path of $\mu_{c^2g_{\bar{t}}^Y}$ 
volume Yamabe metrics connecting $(\mu_{c^2g_{\bar{t}}^Y}/\mu_{g_{-}})^{\frac{2}
{n}}g_{-}$ and the non Ricci flat zero scalar curvature metric 
$\tilde{g}_{\bar{t}}^Y$ of minimal embedding, 
$[0,\bar{t}) \ni t \rightarrow s_{\tilde{g}_t^Y}<0$, and none of the metrics
$[\bar{t}-\varepsilon,\bar{t}]\ni t \rightarrow 
\tilde{g}_t^Y$ are Einstein, and the embeddings  
$[\bar{t}-\varepsilon,\bar{t}]\rightarrow 
f_{\tilde{g}_t^Y}$ are minimal.
Since the path consists of metrics of fixed volume, by the same 
argument in the first part of the proof of Theorem \ref{thty}, we conclude 
that $(\bar{t}-\varepsilon,\bar{t}) \ni t \rightarrow \lambda(M, \tilde{g}_t^Y)$
is stationary, so constant on the closure of the interval.
Thus, 
$[\bar{t}-\varepsilon,\bar{t}] \ni t \rightarrow \lambda(M, \tilde{g}_t^Y)=
s_{\tilde{g}_{t}^Y}\mu_{\tilde{g}_{t}^Y}= 
s_{\tilde{g}_{\bar{t}-\varepsilon}^Y}\mu_{\tilde{g}_{\bar{t}-\varepsilon}^Y} 
<0$, which contradicts the fact that $\lambda(M,\tilde{g}_{\bar{t}}^Y)=0$.  
Hence, the metric $g_{\bar{t}}^Y$, at which $t \rightarrow \lambda(M,g^Y_t)$ 
vanishes for the first time, must be Ricci flat.

Since the existence of $g'$ suffices to conclude that $M$ is of Kazdan-Warner 
type I, by Theorem \ref{thty}, it follows that 
in the set of scalar flat metrics on $M$, the Ricci flat ones form a
proper subset. If $M$ is orientable, by using any Ricci flat 
metric in the role of $g$ in \cite[\S 3.2, Theorem 2]{sidc}, we see that the
Riemannian metric representative of $w_2(M)$ must be trivial in cohomology, and
so $M$ is then spinnable. 

Since the sectional curvature of an Einstein metric on a 3d 
manifold is constant, so any one such corresponds to a 3d closed space
form, the existence of Einstein metrics of scalar curvature of different signs
forces the condition $n>3$.

If we assume further that $r_{g'}>0$, by Myers' theorem \cite{myer}, the 
fundamental group of $M$ is finite, and its universal cover $\tilde{M}$ is 
compact.  We prove then that it is not possible to have $n=4$. Indeed, if $M$ 
is orientable, then it is spinnable, and by Rohlin's theorem \cite{roh}, 
$\tau(M)= 16 k$ for $k \in \mb{Z}$. Since the Ricci tensor of any metric
can be used to define a Riemannian representative of the even class
$w_2(M)$ \cite[\S 3.2, Theorem 2]{sidc}, there is a spin structure
on $M$ associated to the metric $g_{+}$, and since
$s_{g_{+}}>0$, by  Lichnerowicz's theorem \cite{lic}, 
$$
\hat{A}(M)=0=-\frac{1}{24} p_1(M)=-\frac{1}{8}\tau(M) \, , 
$$     
so $\tau(M)=0$. If $g$ is any metric on $M$, 
and $W_g=W_g^+ + W_g^{-}$ is the Weyl curvature tensor of $g$ 
decomposed into its $\pm 1$ Hodge star $*_g$ eigenspace components, by
the  Chern-Gauss-Bonnet and signature theorems, we have that
$$
\chi(M)={\displaystyle \frac{1}{8\pi^2}\int \left( |W^{+}_g|^2 +|W_g^{-}|^2- 
\frac{1}{2}|r_g -\frac{s_g}{4}g|^2+\frac{1}{24} s_g^2\right) d\mu_g } \, ,
$$
and 
$$
\tau(M)=b_{+}-b_{-}={\displaystyle \frac{1}{12\pi^2}\int \left( |W^{+}_g|^2- 
|W^{-}_g|^2\right) d\mu_g } \, ,
$$
respectively. Hence, if $g$ is Einstein of nonzero scalar curvature, 
$\chi(M)>0$, and 
$$
2\chi(M) \pm \tau(M) ={\displaystyle \frac{1}{4\pi^2}\int \left( 
2|W^{\pm}_g|^2 + \frac{1}{24} s_g^2\right) d\mu_g } > 0 \, .     
$$
We then see that the universal cover $\tilde{M}$ of $M$, which can be
provided with induced metrics of positive Ricci tensor and
Einstein metrics of negative Ricci tensor by lifting those in $M$, 
carries also
scalar flat non Ricci flat and Ricci flat metrics, and being orientable, it is 
spinnable, has even intersection form, and $\tau(\tilde{M})=0$. Since the 
only even unimodular forms with signature zero are the $k$ fold product 
hyperbolic forms  
$k H=k\left( \begin{array}{cc}
0 & 1\\ 1 & 0
\end{array}
\right)$, 
by Freedman's classification theorem in the topological category \cite{free}, 
$\tilde{M}$ is either $\mb{S}^4$ or $\#_{i=1}^k \mb{S}^2 \times \mb{S}^2$, 
with differentiable structure and orientation compatible with those of $M$. 
None of these manifolds can carry both metrics of positive Ricci 
tensor and Einstein metric of negative Ricci tensor. For if any one did, 
since a 4d manifold is Einstein if, and only if, the sectional curvature 
of any plane is equal to the sectional curvature of its orthogonal complement 
\cite{sith}, given a positively oriented normal frame $\{ e_1, e_2, e_3,e_4\}$
for a Ricci flat metric $g_0$, we will have equality of the sectional 
curvatures of
$e_1 \wedge e_2$ and $e_3 \wedge e_4$,
$e_1 \wedge e_3$ and $e_2 \wedge e_4$, and 
$e_1 \wedge e_4$ and $e_2 \wedge e_3$, respectively, and 
$s_{g_0}=0= 4(K^{g_0}(e_1 \wedge e_2)+ K^{g_0}(e_1 \wedge e_3)+
K^{g_0}(e_1 \wedge e_4))$, so at any point either 
the three sectional curvatures $K^{g_0}(e_1 \wedge e_2)$, $K^{g_0}(e_1 
\wedge e_3)$, $K^{g_0}(e_1 \wedge e_4)$ vanish, or otherwise, at least one
is positive and one is negative. 
We consider a finite good cover $\{ U_\alpha\}_{\alpha\in \mc{A}}$ of the 
manifold by convex sets, and positively oriented $g_0$ normal frames 
$\{ e_i^{\alpha}\}$ on $U_{\alpha}$ that we use to trivialize $TU_{\alpha}$. 
 We denote by
$U_{\alpha}^0$ the set of points in $U_{\alpha}$ where  
the sectional curvatures $K^{g_0}(e^{\alpha}_1 \wedge e^{\alpha}_2)$, 
$K^{g_0}(e^{\alpha}_1 \wedge e^{\alpha}_3)$, and $K^{g_0}(e^{\alpha}_1 \wedge 
e^{\alpha}_4)$ are all three equal to zero. 
Since $\tilde{M}$ is closed and simply connected, $g_0$ cannot be flat, and
there exists at least one 
$U_{\alpha}$ such that $U_{\alpha}\setminus U_{\alpha}^0 \neq \varnothing$, 
and at the expense of enlarging the covering, if necessary, we can assume
that $U_{\alpha}^0 = \varnothing$. 
If $K^{g_0}(e^{\alpha}_1\wedge e^{\alpha}_2)>0$, we set 
$J_{\alpha} e^{\alpha}_1=e^{\alpha}_2$, $J_{\alpha}e^{\alpha}_3=
e^{\alpha}_4$, if $K^{g_0}(e^{\alpha}_1\wedge e^{\alpha}_4)>0$, we set
$J_{\alpha}e^{\alpha}_1=e^{\alpha}_4$, $J_{\alpha} e^{\alpha}_2=e^{\alpha}_3$, 
and if $K^{g_0}(e^{\alpha}_1\wedge e^{\alpha}_3)>0$, we set
$J_{\alpha} e^{\alpha}_1=e^{\alpha}_3$, $J_{\alpha} e^{\alpha}_4=e^{\alpha}_2$,
 respectively, to define a tensor $J_{\alpha}$ on $TU_{\alpha}$ such that 
$J_{\alpha}^2= -\BOne\mid_{TU_{ \alpha}}$, and  
which by using the $\mb{S}\mb{O}(4)$ valued transition functions on 
overlapping sets in the covering, can in turn be extended 
to produce a $g_0$ compatible almost complex structure tensor $J_{g_0}$ on the 
entire tangent space of the manifold in question. This excludes from 
consideration the cases of
$\mb{S}^{4}$ and $\#_{i=1}^{k\geq 2}\mb{S}^2\times \mb{S}^2$, which by
the arithmetic genus obstruction, are manifolds that cannot carry 
almost complex structures at all. The remaining case of 
$\mb{S}^2\times \mb{S}^2$ left to consider is a manifold that has a unique
differentiable structure, and which carries a K\"ahler Einstein metric
$(g,J)$ of scalar curvature $s_g=8$ and minimal linear embedding 
$f_g: (\mb{S}^2\times \mb{S}^{2},g)\rightarrow
(\mb{S}^5,\tg)$. The differentiable structure of this manifold cannot be 
the same as that of the homeomorphic manifold $\tilde{M}$ on which 
we have a $J_{g_0}$ invariant Ricci flat metric $g_0$. For suppose
that $J$ and $J_{g_0}$ are in the same orientation class, so 
$(g,J)$ and $(g_{0},J_{g_9})$ are smooth almost Hermtian structures, and they
can be connected to each other by a path $t \rightarrow (g_t,J_t)$ of such.
The smooth path of metrics $[0,t]\rightarrow g_t$ on this manifold starts at 
$g$ and ends at $g_0$. We consider the lift of the path of classes $t\rightarrow
[g_t]$ to a path $t \rightarrow g_t^Y$ of Yamabe metrics of $\mu_{g_t}$ volume
in $[g_t]$, and let $\bar{t}$ be the first time such that
$s_{g_{t}^Y}=0$. Then the restricted path $[0,\bar{t}] \rightarrow \tg_t^Y:=
(\mu_g/\mu_{g_t^Y})^{\frac{1}{2}}g_t^Y$ of constant volume Yamabe metrics 
connects $g$ to the zero scalar curvature metric $\tg_{\bar{t}}^Y$, 
and for some sufficiently small $\varepsilon > 0$, (which depends only
on the positive spectrum of $\Delta^{\tilde{g}_{\bar{t}}^Y}$),  and $t$s in
$(\bar{t}-\varepsilon,\bar{t})$, the paths $t \rightarrow 
\lambda(\mb{S}^2\times \mb{S}^2, \tilde{g}_t^Y)$, 
$t\rightarrow \Phi_{f_{\tilde{g}_t^Y}}(\mb{S}^2 \times \mb{S}^2, 
\tilde{g}_t^Y)$ and $t\rightarrow \Pi_{f_{\tilde{g}_t^Y}}(\mb{S}^2
\times \mb{S}^2, \tilde{g}_t^Y)$ are differentiable, and
$t\rightarrow f_{\tilde{g}_t^Y}$ is a path of minimal embeddings.  
By the argument of earlier, on  
$(\bar{t}-\varepsilon,\bar{t})$ the functional
$\lambda(\mb{S}^2\times \mb{S}^2, \tilde{g}_t^Y)$ is stationary, which 
contradicts the fact that its value at $\bar{t}-\varepsilon$ is positive
while its value at $\bar{t}$ is zero.
If on the other hand, $M$ is not orientable, we use a 2-to-1 
covering $M'\rightarrow M$ by an 
orientable $M'$, which we provide with the lift of the metrics $g'$ and $g_{-}$
to obtain metrics of positive Ricci curvature and Einstein metric of negative 
scalar curvature. By the preliminary result in the orientable case,
no such $M'$ exists.

Finally, suppose that $M^4$ admits an Einstein metric $g$. If $s_g \neq 0$,
we know already that if $s_g>0$, $M$ does not admit an Einstein metric whose
scalar curvature is negative, while if $s_g<0$, $M$ does not even admit metrics
of positive Ricci tensor. In none of these cases, $M$ admits Ricci 
flat metrics. For if otherwise $g_0$ is a Ricci flat metric on $M$, 
if necessary, we pass to a 2-to-1 cover and assume that $M$ is orientable.
Then, as above, we may use $g_0$ to show that $M$ is spinnable and 
carries a compatible almost complex structure. If $g$ is such that
$s_g >0$, with an appropriate spin structure associated to it, Lichnerowicz 
theorem applies, and we end up 
in the situation discussed in the previous paragraph, where the universal 
cover of $M$ is compact, of even intersection form and vanishing signature, and
carries Einstein metrics of positive and zero scalar curvature, 
properties that together makes its existence impossible.
If $s_g<0$ instead, we first observe that by the Cheeger-Gromoll splitting 
theorem, $(M,g_0)$ admits a finite Riemannian covering that is a direct
product of a finite quotient $(\tilde{M},\tilde{g}_0)$ 
of a simply connected Ricci flat manifold 
$(\bar{M},\bar{g}_0)$ 
and a flat $k$-dimensional torus $(T^k,g_k)$, $0\leq k\leq 4$, with 
$\pi_1(M)$ having a subgroup of finite index isomorphic to $\mb{Z}^k$
(the universal cover of $M$ is $\mb{R}^k\times \bar{M}$ with the product
 of the flat metric and $\bar{g}_0$). Hence, the 
metric $g$ can be lifted to an Einstein metric $g'$ on 
$\tilde{M} \times T^k$ with $s_{g'}<0$. Since there are no simply connected
Ricci flat manifolds of dimensions $1, 2$ or $3$, $k$ can be either $0$
or $4$. But neither a K3 surface nor the torus $T^4$ admit an Einstein
metrics of negative scalar curvature, contradicting the existence of
$g'$. If on the other hand the metric $g$ is Ricci flat to begin with,  
the portion of the proof just provided implies then that $M$ does not carry 
Einstein metrics of either positive or negative scalar curvature.    
\qed

\begin{remark}
There are infinitely many examples of 4d manifolds that admit no Einstein 
metric at all, the simplest of them being those constructed on the basis
of Gromov's simplicial volume estimate \cite{grom} (see 
\cite[Theorem 6.47, Example 6.48]{be}). 
And there are homeomorphic minimal surfaces of general type
with an arbitrarily large number of distinct smooth structures of ample 
canonical bundle, hence carrying K\"ahler Einstein metrics of negative
scalar curvature (so a given 4d topological manifold can have an arbitrarily
large number of smooth structures each admitting Einstein metrics).      
It is hard to imagine that the sigma invariants of these smooth
structures on the surface are different. 
\end{remark}

\begin{theorem} \label{am}
Suppose that $[0,1]\ni t \rightarrow f_{g_t}: (M,g_t) \rightarrow (
\mb{S}^{\tn},\tg)$ is a smooth path of differentiable isometric embeddings of 
a smooth path $t\rightarrow g_t$ of smooth Yamabe metrics. 
Then $(f_{g_{\bar{t}}}(M),\tg\mid_{f_{g_{\bar{t}}}(M)})$ has constant sectional 
curvature $c_{g_{\bar{t}}}$ for some $\bar{t}\in [0,1]$ 
if, and only if, $(f_{g_{t}}(M),\tg\mid_{f_{g_{t}}(M)})$ has constant sectional 
curvature $c_{g_t}$ for all $t \in [0,1]$, in which case, the sign of the
constants in the path $[0,1]\ni t\rightarrow c_{g_t}=\frac{1}{n(n-1)}s_{g_t}$
is unique.
\end{theorem}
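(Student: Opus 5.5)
The plan is an open--closed argument on $[0,1]$, followed by a separate argument for the uniqueness of the sign. Let $\mc{T}\subset [0,1]$ be the set of parameters $t$ for which $(f_{g_t}(M),\tg\mid_{f_{g_t}(M)})$ has constant sectional curvature. I would show that $\mc{T}$ is closed and that, once nonempty, it is open. Connectedness of $[0,1]$ then gives $\mc{T}=[0,1]$ as soon as $\bar{t}\in\mc{T}$, which is the nontrivial implication; the converse is immediate. Closedness is routine. The path $t\rightarrow g_t$ is smooth, so its curvature tensor $Rm_{g_t}$ depends continuously on $t$ in $C^0$. The condition $Rm_{g_t}=c\, g_t\owedge g_t$ with $c$ constant, where $\owedge$ is the Kulkarni--Nomizu product, is a closed condition. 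Its constant is forced by the trace, $c_{g_t}=s_{g_t}/n(n-1)$. This also identifies the constants in the statement.

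For openness, fix $t_0\in\mc{T}$. Then $g_{t_0}$ is Einstein, hence a Yamabe metric with differentiable scalar curvature nearby. This holds in the nonpositive case automatically, and in the positive case because $s_{g_{t_0}}/(n-1)=nc_{g_{t_0}}$ is not in ${\rm Spec}\,\Delta^{g_{t_0}}$ except on the round sphere, where the statement is trivial. As in the proofs of Theorems \ref{thty} and \ref{th2}, I would rescale by $c^2_{min}$ of (\ref{minf}) and normalize to constant volume. This makes $t\rightarrow f_{\tilde{g}_t}$ a path of minimal embeddings near $t_0$ along which $s_{\tilde{g}_t}$ is differentiable.

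Next, I would apply the variational formulas (\ref{grvea}) and (\ref{grvep}) with the geodesic frame commuting with $T$, exactly as in (\ref{veqh}) and (\ref{veqp}). The same computation applies to the quadratic extrinsic functional $\int\|\alpha_{f_{\tilde g_t}}\|^4$-type densities. Via the Gauss equation
$$
Rm_{\tilde g_t}=\tfrac{1}{2}\,\tg\owedge\tg\mid_{TM}+\<\alpha_{f_{\tilde g_t}},\alpha_{f_{\tilde g_t}}\>_{\owedge}\, ,
$$
these densities control $\int(|W_{\tilde g_t}|^2+|r_{\tilde g_t}-\frac{s_{\tilde g_t}}{n}\tilde g_t|^2)\,d\mu_{\tilde g_t}$. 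Here $\<\alpha,\alpha\>_{\owedge}$ denotes the quadratic expression in $\alpha$ appearing in the Gauss equation. The target is to show that this nonnegative quantity is stationary on a neighborhood of $t_0$. Since it vanishes at $t_0$, it then vanishes identically nearby. Consequently $W_{\tilde g_t}=0$ and $\tilde g_t$ is Einstein, so $\tilde g_t$ has constant sectional curvature, which is openness. This step is the main obstacle. The problem is that, unlike $\Psi$ and $\Pi$, the curvature-norm functional is not a priori stationary in normal directions. So I would first try to express it, using minimality and Codazzi's equation as in the display preceding (\ref{veqp}), as a combination of $\Pi_{f_{\tilde g_t}}$, $\lambda(M,\tilde g_t)$ and boundary-type terms, whose $T^{\tau}$ variations vanish by the same frame argument.

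Finally, for the sign, $t\rightarrow c_{g_t}=s_{g_t}/n(n-1)$ is continuous, and by the above it is differentiable. Suppose it took values of both signs. By the intermediate value theorem there would then be a first $\bar{t}$ with $s_{g_{\bar t}}=0$, where $g_{\bar t}$ is flat and hence Ricci flat. I would then repeat verbatim the argument of Theorem \ref{thty}: the constant-volume normalized path has minimal embeddings near $\bar{t}$. By (\ref{veq}), (\ref{veqh}) and (\ref{veqp}), $\lambda(M,\tilde g_t)$ is stationary on one side of $\bar{t}$, hence identically zero there, contradicting the change of sign. Hence the sign of $c_{g_t}$ is the same for all $t\in[0,1]$.
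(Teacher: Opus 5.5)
\textbf{The gap is the openness of $\mc{T}$.} You single this step out yourself, and it is not a technical gap: it cannot be filled.

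At $t_0$ the derivative of $\int(|W_{\tilde{g}_t}|^2+|r_{\tilde{g}_t}-\frac{s_{\tilde{g}_t}}{n}\tilde{g}_t|^2)\,d\mu_{\tilde{g}_t}$ vanishes for free, because the integrand is quadratic in tensors that vanish at $t_0$. Everything therefore rests on stationarity at nearby $t$. There $\tilde{g}_t$ is merely a metric of constant scalar curvature, and nothing in (\ref{grvea})--(\ref{veqp}) singles out this functional.

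Your normalization is not available either. Dilating by $c^2_{min}$ of the single metric $g_{t_0}$ and then fixing the volume does not make the embeddings $f_{\tilde{g}_t}$, $t\neq t_0$, minimal. By Takahashi's theorem, an isometric embedding into the unit sphere is minimal only if its coordinate functions are eigenfunctions of $\Delta^{\tilde{g}_t}$ with eigenvalue $n$. A path of fixed volume metrics does not preserve that condition in general.

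More decisively, the implication you are trying to prove is false, so no argument can give openness. Here is a counterexample.
\begin{itemize}
\item Take $M=T^n$ with $n\geq 3$ and $g_0$ flat. Choose $h$ so that the first variation at $t=0$ of the Weyl tensor of $g_0+th$ (the Cotton tensor if $n=3$) is nonzero. Then $[g_0+th]$ is not conformally flat for small $t>0$.
\item $T^n$ carries no metric of positive scalar curvature, and its scalar flat metrics are flat. Hence $\lambda(T^n,[g_0+th])<0$ for such $t$.
\item The volume normalized Yamabe metrics $g_t$ of these classes are therefore unique. They depend smoothly on $t$ by the implicit function theorem: the linearized operator is a positive multiple of $(n-1)\Delta^{g}-s_g$. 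This is invertible for $s_g<0$, and at $t=0$ its kernel is only the constants, which the volume constraint removes.
\item Nash's perturbation theorem gives a smooth family of isometric embeddings $f_{g_t}$ into a sphere for small $t$.
\end{itemize}
So all hypotheses hold. Yet $g_0$ is flat, while for small $t>0$ the metric $g_t$ lies in a class that is not conformally flat, so it does not have constant sectional curvature.

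The same example also breaks your sign argument. You import from (\ref{veq}), (\ref{veqh}) and (\ref{veqp}) that $\lambda(M,\tilde{g}_t)$ is stationary near a scalar flat time. Here $\lambda$ drops from $0$ to negative values immediately after $t=0$, so that stationarity is not available.

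\textbf{Comparison with the paper's route.} The paper uses neither minimality nor $\Psi$, $\Pi$ in this proof. It fixes a section at $\bar t$ and moves a frame along $T$. It then splits $\dot{g}'_{\bar t}$ into conformal and trace free parts and argues from the Yamabe condition and the Gauss equation that $\frac{d}{dt}K^{g'_t}(e_1^t,e_2^t)\mid_{t=\bar t}=0$, as in (\ref{grt}).

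That computation lives only at the time where the curvature is already constant, so it cannot give constancy at other times. The example shows it fails even at that time. Let $\dot{R}$ be the first variation of the curvature tensor at $t=0$. Since $g_0$ is flat, $\frac{d}{dt}K^{g_t}(e_1^t,e_2^t)\mid_{t=0}=\dot{R}(e_1,e_2,e_2,e_1)$. These numbers cannot all vanish: otherwise $\dot{R}=0$, so the first variation of the Weyl (Cotton) tensor would vanish, contrary to the choice of $h$.

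\textbf{Two smaller points.}
\begin{itemize}
\item Once constancy for all $t$ is granted, the sign claim is purely topological. Myers, Bieberbach and Preissmann rule out space form metrics of two different signs on a closed $M^{n\geq 2}$, so the machinery of Theorem \ref{thty} is unnecessary.
\item The round sphere is not a trivial case. Yamabe metrics of classes that are not conformally flat are never round.
\end{itemize}
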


{\it Proof}. The result holds by definition if $n=1$. If $n=2$, then the
path of scalar curvatures is given by $t \rightarrow s_{g_t}=(8\pi \chi(M)/ 
(\mc{W}(M,[g_t])^{\frac{1}{2}})/(\mc{W}(M,[g_t])/4)^{\frac{1}{2}}=
16\pi \chi(M)/\mc{W}(M,[g_t])$, with 
$\mc{W}(M,[g_t]) \geq \mc{W}(M,[g_k])=4\mu_{g_k}(M)$ where $[g_k]$ is the 
conformal class of a metric $g_k$ of minimal isometric embeddign $f_{g_k}$ 
realizing the sigma invariant of the surface, and so the result clearly holds 
if $\chi(M)\neq 0$, as well as 
if $\chi(M)=0$ since then the paths $[g_t]\rightarrow \mc{W}_{f_{g_t}}(M)
=\mc{W}_{ f_{g_t}}(M)=\mc{W}(M,[g_t])$ and 
$t \rightarrow [g_t]\rightarrow s_{g_t}=\frac{1}{\mu_{g_t}(M)}
(\mc{W}_{f_{g_t}}(M)- \mc{D}_{f_{g_t}}(M))=0$ are both smooth. 
Thus, we assume that $n\geq 3$. 
We prove then that if at $t=\bar{t}$ and for any section we have
that $K^{g_t}(u,v)\mid_{t=\bar{t}}= 
\frac{1}{n(n-1)}s_{g_{\bar{t}}}$, then for all $t$ and for any section 
we have that   $K^{g_t}(u,v)= \frac{1}{n(n-1)} s_{g_t}$. 

We rescale the path of metrics to a path $t \rightarrow g'_t:=(\mu_{ g_{
\bar{t}}}(M)/\mu_{g_{t}}(M))^{\frac{2}{n}}g_t$ of $\mu_{g_{\bar{t}}}(M)$ 
volume Yamabe metrics, and consider the path $t \rightarrow f_{g'_t}$ of their 
isometric embeddings. The hypothesis apply for the rescaled metrics and their
isometric embeddings.    
Suppose that $u,v$ are $g'_{\bar{t}}=g_{\bar{t}}$ orthonormal vectors spanning 
a section in the tangent space at $p'_{\bar{t}}=f_{g'_{\bar{t}}}(p) \in 
f_{g'_{\bar{t}}}(M)$, which we complete to an orthonormal basis of this 
tangent space, and extend by parallel translation to an orthonormal frame 
$\{e_1, e_2, \ldots , e_n\}$ for $\tg\mid_{f_{g'_{\bar{t}}}(M)}$ nearby
such that $e_1\mid_{p'_{\bar{t}}}=u$ and 
$e_2\mid_{p'_{\bar{t}}}=v$. 
We have the decomposition
$T\mb{S}^{\tn}_{\phantom{\tn} \mid_{f_{g'_t}(M)}}=
Tf_{g'_t}(M) \oplus \nu{(f_{g'_t}(M))}$, and 
accordingly, the decomposition
$df_{g'_t}(\partial_t):=T_t=T^{\tau}_t +T_t^{\nu}$ of the  
variational vector field of the embedding $f_{g'_t}$ into tangential and 
normal components. Since by hypothesis $T_t$ varies smoothly with $t$,  
by parallel translation along $T_t$ in the ambient sphere, 
 we extend $\{ e_1, e_2, \ldots, e_n\}$ to
a $\tg\mid_{f_{g'_t}(M)}$ frame
$\{e^t_1, e^t_2, \ldots , e^t_n\}$ nearby 
$p'_t= f_{g'_t}(p)$, for $t$s sufficiently close to $\bar{t}$. Interpreted
intrinsically, the $g'_t$ frame $\{e^t_1, e^t_2, \ldots , e^t_n\}$ 
nearby $p \in M$ so constructed is such that, the section spanned by
$e_1^t\mid_p, e_2^t\mid_p$ in $T_pM$ is compatible with the smoothness 
assumptions of the isometric embeddings $f_{g'_t}$ for $t$s close to $\bar{t}$.
If $h_t= \dot{g}^{'}_t=h^{[g'_t]}:=\frac{(h_t,g'_t)_{g'_t}}{n} g'_t + 
h_t^0$, where $h_t^0$ has $g'_t$ trace $0$, we prove that 
\begin{equation} \label{grt}
\frac{d}{dt}K^{g'_t}(e_1^t,e_2^t)\mid_{t=\bar{t}}=D_{g'_{\bar{t}}}
(K^{g'_t}(e_1^t,e_2^t))(h_{\bar{t}})=D_{g'_{\bar{t}}}(K^{g'_t}(e_1^t,e_2^t))
(h^{[g'_{\bar{t}}]}+ h_{\bar{t}}^0)= 0\, . 
\end{equation}

We have that $K^{g'_t}(e_1^t \mid_p, e_2^t\mid_p)\mid_{t=\bar{t}}=
(1/n(n-1))s_{g'_{\bar{t}}}$, and when the metric varies in the 
conformal direction $h^{[g'_t]}$ at $t=\bar{t}$, the differential is $1/n(n-1)$ 
times the variation of the scalar curvature $s_{g'_t}$ in this direction
at $t=\bar{t}$, which vanishes since $g'_t$ is a Yamabe metric of constant 
volume $\mu_{g'_{\bar{t}}}$. Thus, 
$$
D_{g'_{\bar{t}}}(K^{g'_t}(e_1^t,e_2^t))(h^{[g'_{\bar{t}}]})= 0\, .
$$

For the proof that the remaining portion of the differential vanishes also, it 
is convenient to start with the expression for the sectional curvature in 
extrinsic terms
$$ 
K^{g'_t}(e_1^t,e_2^t)=1+ 
\tg(\alpha_{f_{g'_t}}(e_1^t, e_1^t), \alpha_{f_{g'_t}} 
(e_2^t,e_2^t))- \tg(\alpha_{f_{g'_t}}(e_1^t,e_2^t),\alpha_{f_{g'_t}}(e_1^t,
e_2^t)) 
$$
that derives by Gauss' equation.  The symmetric tensor $h_t^{0}$, of vanishing 
$g'_t$ trace, is a purely tangential two tensor on $T\mb{S}^{\tn}_{\phantom{
\tn} \mid_{ f_{g'_t}(M)}}= Tf_{g'_t}(M) \oplus \nu{(f_{g'_t}(M))}$. We 
use a normal frame of $\nu(f_{g_{\bar{t}}}(M))$ nearby $p'_{\bar{t}}$ to 
extend both, $h^0_{\bar{t}}$ and the frame $\{ e_1, e_2, \ldots, e_n\}$,  
 covariantly constant in the normal directions to a tensor 
$\tilde{h}^0_{\bar{t}}$ and orthonormal vector fields 
$\{ \tilde{e}_1, \tilde{e}_2, \ldots, \tilde{e}_n\}$ that are defined on a 
tubular neighborhood of $f_{g'_{\bar{t}}}(M)$ in $\mb{S}^{\tn}$. 
Since $e_1$ and $e_2$ are $g_{\bar{t}}$ covariantly constant extensions of 
$u$ and $v$, we obtain that
$$
\begin{array}{rcl} 
D_{g'_{\bar{t}}}(K^{g'_t}(e_1^t,e_2^t))
(h_{\bar{t}}^0) & = & 
\tg((\nabla^{\tg})' \tilde{h}^0_{\bar{t}}(e_1,e_1),\alpha_{f_{g'_{\bar{t}}}}(
e_2,e_2))+ \tg((\nabla^{\tg})' \tilde{h}^0_{\bar{t}}(e_2,e_2),\alpha_{f_{g'_{
\bar{t}}}}(e_1,e_1)) \vspace{1mm} \\ & & 
-2\tg((\nabla^{\tg})' \tilde{h}^0_{\bar{t}}(e_1,e_2),\alpha_{f_{g'_t}}(e_1,
e_2)) \, ,  
\end{array}
$$
where $(\nabla^{\tg})' \tilde{h}^0_{\bar{t}}$ stands for the 
variation of the Levi-Civita connection $\nabla^{\tg}$ of the ambient 
space metric $\tg$ as $\tg$ varies in the direction of 
$\tilde{h}^0_{\bar{t}}$,   
which for vector fields $X,Y,Z$ on the sphere may be explicitly computed by the
identity 
$$
\tg((\nabla^{\tg}){'}\tilde{h}^0_{\bar{t}}(X,Y),Z)=\frac{1}{2}\left(
\nabla^{\tg}_X\tilde{h}^0_{\bar{t}}(Y,Z)+\nabla^{\tg}_Y\tilde{h}^0_{\bar{t}}
(X,Z) -\nabla^{\tg}_Z \tilde{h}^0_{\bar{t}}(X,Y)\right) \, .  
$$
Evaluating at $p'_{\bar{t}}$, we obtain that
$$
\begin{array}{rcl} 
D_{g'_{\bar{t}}}(K^{g'_t}(e_1^t,e_2^t))
(h_{\bar{t}}^0) & = & -\frac{1}{2}(  
\nabla^{\tg}_{\alpha_{f_{g'_{\bar{t}}}(e_2,e_2)}}\tilde{h}^0_ {\bar{t}}(
e_1,e_1) + \nabla^{\tg}_{\alpha_{f_{g'_{\bar{t}}}(e_1,e_1)}}\tilde{h}^0_{
\bar{t}}(e_2,e_2)) \\ & & 
+ \nabla^{\tg}_{\alpha_{f_{g'_{\bar{t}}(e_1,e_2)}}}\tilde{h}^0_{\bar{t}}(e_1,
e_2)) \\ 
& =  & 0 \, .  
\end{array}
$$

By (\ref{grt}), the $g'_t$ section $e_1^t \wedge e_2^t$, whose spanning
set may or not be changing with $t$, has curvature that is independent of $t$, 
equal to its 
curvature at $t=\bar{t}$. By the arbitrariness of the initial section
$e_1 \wedge e_2$,  the sectional curvatures of $g'_t$ are all equal to 
$(1/n(n-1))s_{g'_{\bar{t}}}$, which is the sectional curvature of 
$g'_{\bar{t}}$. The desired
result for the sectional curvatures of $g_t$ follows since
$s_{g'_t}=(\mu_{g_{\bar{t}}}(M)/\mu_{g_t}(M))^{-\frac{2}{n}}s_{g_t}$.
\qed 

\section{The $\sigma$ invariant of manifolds with standard model geometries}
\subsection{Standard geometric models}
In dimension $3$, the stabilizer subgroups of any geometric model 
$({\rm Isom}(X,g),X)$ have identity component subgroups that can be either
$\mb{S}\mb{O}(3)$, $\mb{S}\mb{O}(2)$, or $\mb{S}\mb{O}(1)$,
of dimensions $3$, $1$ and $0$, respectively. In the first case,
the model $X$ is either the $3$-sphere, Euclidean $3$-space, or hyperbolic
$3$-space. In the second case, $X$ fibers over a two 
dimensional model geometry in a way that is invariant under ${\rm Isom}(X,g)$, 
which is a group of dimension $4$, and the metric connection orthogonal to the 
fibers has curvature $0$ or $1$, 
yielding four other distinct models that are line bundles over two 
dimensional ones. In the last case, we obtain a solvmanifold model
$X$ that fibers over the line, and which itself identifies with
the identity component of the group ${\rm Isom}(X,g)$ of dimension $3$. 
Each of these eight models has an essentially
unique differentiable structure. We describe their version in
full dimensional generality, making the standard choice of 
differentiable structure then being considered when $n>3$. 

\subsubsection{Elliptic, Euclidean and hyperbolic geometric models}
The homogeneous manifold models leading to
elliptic, Euclidean and hyperbolic geometries are 
 the standard sphere $(\mb{S}^n,
g_ {\mb{S}^n})$, $g_{\mb{S}^n}$ a metric of sectional curvature 
$1-\delta_{n,1}$, $\mb{S}^n \cong \mb{O}(n+1)/\mb{O}(n)$, the Euclidean space 
$(\mb{R}^n, g_{\mb{R}^n})$, $g_{\mb{R}^n}$ its standard flat metric, 
$\mb{R}^n \cong \mb{O}(n)\ltimes \mb{R}^n/\mb{O}(n)\times 
\{ 0\}$, and the hyperbolic space $(\mb{H}^n,g_{\mb{H}^n})$, 
$g_{\mb{H}^n}$ a metric of sectional curvature $-1+\delta_{n,1}$,
$\mb{H}^n=\mb{O}^+(1,n)/ \mb{O}(n)$. The isometry group of these models is of 
the largest possible dimension $n(n+1)/2$, and the stabilizer of a point is 
$\mb{O}(n)$. The latter two are simply connected, as is the first if 
$n\geq 2$. If $n=1$, the universal cover of 
$(\mb{S}^1,g_{\mb{S}^1})$ with the lifted metric is 
$(\mb{R}^1, \| \, \|^2)$, and for dimensional reasons we have the isometric 
identifications $(\mb{R}^1, \| \, \|^2) \cong (\mb{R}^1,g_{\mb{R}^1})
\cong (\mb{H}^1,g_{\mb{H}^1})$ producing the then single model geometry, with 
the quotient $\mb{R}^1/\mb{Z}=(\mb{S}^1,{g_{\mb{S}^1}})$ inheriting a trivial
CR structure from the complex structure of $\mb{R}^2\subset \mb{S}^2\setminus
\{(0,0,1)\}$. 

\subsubsection{Trivial line bundles over nonzero constant sectional curvature 
lower dimensional models}
We have the special cases of homogeneous  
manifolds $(\mb{S}^{n-1}\times \mb{R}, g_{\mb{S}^{n-1}}+dr^2)$ 
and $(\mb{H}^{n-1}\times \mb{R}, g_{\mb{H}^{n-1}} +dr^2)$ of 
isometry groups $\mb{O}(n) \times \mb{R}$,   
and $\mb{O}^{+}(1,n-1)\times \mb{R}$, respectively, which act transitively
on the corresponding manifolds with compact point stabilizers. If $n>2$, these 
cases can be characterized as trivial line bundles over a homogeneous base of 
nonzero constant sectional curvature. If $n=2$, the universal cover of 
$\mb{S}^1 \times \mb{R}$ is $\mb{R}\times \mb{R}$ with a flat metric on it.  

\subsubsection{Lie group models}
Any Lie group $(M,g)$ with a left invariant metric $g$ is a homogeneous 
manifold,
and if simply connected, it yields a model geometry. In these cases, the 
group acts isometrically on itself by left multiplication, and we obtain an 
embedding $M \hookrightarrow {\rm Isom}\, (M,g)$.  
Three distinguished examples of this type are:

\noindent (a) The Heisenberg group: 
The group of linear fractional transformations of the 
unit ball $B=\{ z \in \mb{C}^{1+n}: \; 1-\sum_{k=1}^{n+1} |z_k|^2>0\}$
in $\mb{C}^{n+1}$ is given by $\mb{S}\mb{U}(1,n+1)$, and 
the mapping
$$
\begin{array}{rcl}
\mb{C}^{2+n}  & \rightarrow & \mb{C}^{1+n} \\
 (Z_0, \ldots, Z_{n+1}) & \mapsto & \frac{1}{Z_0}(Z_1, \ldots, Z_{n+1}) 
\end{array}
$$
establishes a correspondence between the set 
$$
\{ Z : \; | Z_0|^2- \sum_{k=1}^{n+1} | Z_k|^2 > 0 \}
$$
and $B$, where by definition, $\mb{S}\mb{U}(1,n+1)$ is the group of 
linear transformations on $\mb{C}^{n+2}$ that preserve the quadratic form 
defining the set, and thus defining its action on $B$. 
Since $\mb{S}^{2n+1}=\partial B$, this defines also the action of the group 
on the sphere.

The Siegel domain $D=\{ (z_1,z)\in \mb{C}^{1+n}: \; {\rm Im}\, z_1 > | 
z |^2\}$ is identified with the set
$$
\{ Z : \; \frac{1}{2i}( \overline{Z}_0 Z_1 -Z_{0}\overline{Z}_1) - 
\sum_{k=1}^{n+1} | Z_k|^2 > 0 \}\, , 
$$
and the action of $\mb{S}\mb{U}(1,n+1)$ on it thus defines its action on
$D$. The linear fractional transformation 
$$
T(z_1,z)=\frac{1}{i(z_1-1)}(z_1+1,z) 
$$
maps $B$ onto $D$, with the north pole $(1,0,\ldots, 0)
\in \mb{S}^{2n+1}\subset \mb{C}^{1+n}$ being sent to $\infty$.

The Heisenberg group $H_{n}$ is the boundary of $D\subset \mb{C}^{1+n}$ with 
its structure of a nilpotent Lie group given by  
$$
(z_1,z)\cdot (w_1, w)=( z_1+w_1+2i\< z, w\>, z+w) \, .   
$$
This group thus have a standard CR structure $(\mc{D},J)$ induced by the 
complex structure on the ambient space $\mb{C}^{1+n}$. Its 
dimension is $n'=2n+1$, and the depiction shows it as
the quotient of $\mb{S}\mb{U}(1,n+1)$ by the isotropy group of the point at 
$\infty$.  As a group, it is isomorphic to $\mb{R} \times \mb{C}^{n}=\mb{R} 
\times \mb{R}^{2n}$ with group structure 
$$
(\theta,z) \cdot (\theta',z') =(\theta+\theta'- 2{\rm Im}\< z , z'\>,
 z+z') \, ,  
$$
and the identifications between the two models is explicitly given by  
$$
(\theta,z) \longleftrightarrow (\theta+i |z|^2, z ) \, .
$$
In the latter model, 
the identity element is 
$(\theta, z)=(0,0)$, and the group inverse of $(\theta, z)$ is the
element $(-\theta, -z)$, so ${\rm Ad}_{(\theta,z)}(a,u)=(a+
4{\rm Im } \< u,z\>,u)$, and  
$$
[(b,v), (a,u)] ={\rm ad}_{(b,v)}(a,u)=(4{\rm Im}\< u,v\>,0)
\, , 
$$
hence, if $(a,u)=(a,x+iy) \in \mb{R}\times \mb{C}^{n}$ and 
$M_{(a,u)}=M_{a,x,y}$ is the $(n+2)\times (n+2)$ matrix all of whose 
entries are zero except for those in its first row and last column that are 
equal to the vectors $(0,x,a)$ and $(a,y,0)$, respectively, the map
$(a,u) \rightarrow 2M_{(a,u)}$ identifies the Lie algebra 
$\mb{R}\times \mb{C}^{n}$ of $H_{n}$ with the nilpotent algebra of matrices 
$\mathfrak{h}_{n'}=\{ M_{a,x,y} | \; (a,x,y)\in \mb{R}\times \mb{R}^{2n}\}$, 
the exponential map $\mf{h}_{n'} \stackrel{\rm exp}{\rightarrow} H_{n}$ is 
onto, $\mf{h}_{n'}$ is the universal covering space of $H_{n}$,  
and we have that $e^{M_{(\theta, z)}}e^{M_{(\theta',z')}}=
e^{M_{(\theta, z)\cdot (\theta',z')}}$, showing the 
correspondence between ${\rm Aut}(\mf{h}_{n'})$ 
and ${\rm Aut}(H_{n})$. This model is thus the total space of a line bundle
over Euclidean $\mb{R}^{2n}$ equipped with a connection of constant 
curvature $1$ and homogeneous metric $g$ such that, the connection is 
Riemannian, the metric on horizontal planes is the pullback of the Euclidean
metric on $\mb{R}^{2n}$, and the metric is invariant under the action of
the line in the vertical directions. ${\rm Isom}(H_n,g)$ is generated
by the lift of isometries on the base, and vertical translations.   

\noindent (b) A group $G$ is solvable if it has a composition series 
$\{ G_i\}$ all of whose factor groups $G_{i+1}/G_i$ are Abelian. This can be
characterized in terms of the derived series of the Lie algebra $\mf{g}$, whose
$k$th element is $\mc{D}^k \mf{g}=[\mc{D}^{k-1}\mf{g},\mc{D}^{k-1}\mf{g}]$,
with $\mc{D}^1 \mf{g}=[\mf{g},\mf{g}]$. The algebra $\mf{g}$ is solvable 
if $\mc{D}^k \mf{g}=0$ for some $k$. And solvable Lie groups correspond to
solvable Lie algebras.

If $n=3$, there is a geometric model of this type that does not arise in any 
of the forms described earlier, neither commutative nor nilpotent:  The 
solvable group $S_2=\mb{R}^2 \rtimes \mb{R}^{\times}$, where 
$t\in \mb{R}^{\times}$ acts on $(x_1,x_2)\in \mb{R}^2$ by 
$$
t\cdot (x_1, x_2)=(tx_1, t^{-1}x_2) \, , 
$$
the identity is $(0,1)$, and $((x_1,x_2),t)^{-1}=(-t^{-1}x_1,-tx_2),
t^{-1})$, so ${\rm Ad}_{(x,t)}((v_1,v_2),s)=((tv_1,t^{-1}v_2),s)$ and
$$
[(w,s),(v,s')]={\rm ad}(w,s)(v,s')=((sv_1,-sv_2),0)\, . 
$$ 
The derived central series of subalgebras 
$\mc{D}^k \mf{s}_2=[\mc{D}^{k-1}\mf{s}_2, \mc{D}^{k-1}\mf{s}_2]$ becomes 
trivial after $k=2$ steps. The isometry group of this model has trivial
point stabilizer, so the model itself is a Lie group given by a split
extension $0 \rightarrow \mb{R}^2 \rightarrow S_2 \rightarrow \mb{R} \rightarrow
1$.  If the group is provided with a left invariant metric $g$ so the 
eigenspaces of the action of $t$ on $\mb{R}^2$ are orthogonal, and the fibers 
are orthogonal to $0 \times \mb{R}$, we have that 
${\rm Isom}(S_2,g)= S_2 \rtimes (\mb{Z}/2\mb{Z})^3$, with the finite factors in
the semidirect product corresponding
to reflections in the direction of $\mb{R}$ and in the direction of the two
eigenspaces in $\mb{R}^2$.

\noindent (c) The universal cover $\widetilde{\mb{P}\mb{S}\mb{L}}(2,\mb{R})$ 
of $\mb{P}\mb{S}\mb{L}(2,\mb{R})=\mb{S}\mb{L}(2,\mb{R})/\{ \BOne, -\BOne\}$,
which has the structure of a line bundle 
$\mb{R} \hookrightarrow \widetilde{\mb{P}\mb{S}\mb{L}}(2,\mb{R}) \rightarrow
\mb{H}^2$ over hyperbolic space provided with a connection of nonzero
constant curvature, and whose total space is equipped with the natural 
homogeneous metric $g$ given by the homogeneous metrics on base and fiber, with
fibers orthogonal to the horizontal planes. The action of
$\mb{P}\mb{S}\mb{L}(2,\mb{R})$ on ${\mb H}^2$ induces a faithful action on 
$S^1(T{\mb H}^2)=\mb{S}\mb{L}(2,\mb{R})/\{ \BOne, -\BOne\}$,
thus we obtain a natural identification of 
$\widetilde{\mb{P}\mb{S}\mb{L}}(2,\mb{R})$ with the indicated line bundle.
We have that 
${\rm Isom}(\widetilde{\mb{P}\mb{S}\mb{L}}(2,\mb{R}),g) \cong 
(\widetilde{\mb{P}\mb{S}\mb{L}}(2,\mb{R})\times \widetilde{\mb{O}}(2))/\mb{Z}$,
where $\mb{O}(2)) \subset \mb{P}\mb{S}\mb{L}(2,\mb{R})$ is the stabilizer of a
point and $\pi_1(\mb{P}\mb{S}\mb{L}(2,\mb{R})) \cong
\pi_1(\mb{S}\mb{O}(2)) \cong \mb{Z}$. 

\begin{remark}
Flat $\mb{R}$ geometrizes $1$d manifolds, the homogeneous group 
$\mb{S}\mb{O}(2)$ excluded as it is not simply connected.  Elliptic, 
Euclidean and hyperbolic models suffice for the geometrization of surfaces, 
the solvable Lie group model $\mb{R}\rtimes \mb{R}^{\times}$  
excluded from consideration since 
for any discrete subgroup $\Gamma$ that acts on it without fixed points,
the quotient manifold $\mb{R}\rtimes \mb{R}^{\times}/\Gamma$ 
does not have finite area. 
\end{remark}

\subsection{Manifolds with standard model geometries: 3d manifolds}
The conjectural picture formulated by Thurston in 1982 in the TGT summarized 
the then situation of 3d manifolds of constant curvature, mostly developed in 
the previous part of the XXth century. In 1910, Bieberbach 
classified the discrete groups of rigid motions of $\mb{R}^n$
with compact fundamental domain, proving that there are finitely
many of them. In 1925 Hopf classified compact 3d manifolds of constant 
positive curvature, with the Lens spaces subclass classified in 1935 by 
Reidemeister using his torsion invariant (that depends on a choice of a
representation of its fundamental group). Few examples of manifolds 
of constant negative curvature, the most common type of manifolds in the TGT, 
were known until Thurston's work in the 1970s, although the subject had been
introduced by Poincar\'e in the 1900s. In 1942 Preissmann proved 
that any nontrivial Abelian subgroup of the fundamental group of a closed
$(M^n,g)$ of strictly negative curvature must be free cyclic, but the 
otherwise dormant subject was revitalized in the 1960s \& 1970s by Ahlfords,
Mostow, Sullivan and others, who established a correspondence between 
appropriate deformations of the fundamental group
and finite area hyperbolic metrics on the quotient of its domain of continuity
by it, so the hyperbolic manifold will be one of finitely many cusps. 
In 1975, Riley, using representations of a knot group $\pi_1(\mb{S}^3 \setminus
K)$, produced examples of knots whose complement can be
given the structure of a complete hyperbolic manifold of finite volume, 
and all of these developments inspired the work of Thurston on knot 
complements, who then 
proved that any prime manifold that contains a 2-sided incompressible surface 
that is not the 2-sphere carries a hyperbolic structure if, and only if, it is 
homotopically atoroidal, the earlier alluded bulk of his geometrization 
conjecture, so if such a manifold is closed, then it is hyperbolic. 
The complete proof of the TGT by Perelman  
describes the possible collapse of 
2-spheres in a gauge modified Ricci flow of Hamilton (1982), 
which makes of it a gradient flow, and then shows how these
singularities can be eliminated by an appropriate ``surgery'' to continue the
flow until the desingularized manifold reaches appropriately geometrized 
identifiable limits of three kinds, two with compact models carrying metrics of
positive scalar curvature, while the third admitting a thick-thin decomposition
with hyperbolic thick part, and thin part modeled by the 
remaining geometries after cutting it by tori.

The various advances leading to Schoen's notion of $\sigma(M^{n\geq 3})$ in
1989 begun later but overlap a bit with the historical period above.   
The Yamabe problem for $n\geq 3$ manifolds was originally formulated and 
analyzed 
by Yamabe in 1960, and fully resolved in significant additional installments 
by Trudinger (1968), Aubin (1976) and Schoen (1984), respectively.
The Kazdan-Warner trichotomy of manifolds of these dimensions appeared in
1975,  
while the Nash isometric embedding theorem dates to 1956, and the  
Palais isotopic deformation theorem to 1960, but their use in the
study of the  total scalar curvature in terms of the interrelated extrinsic 
functionals in (\ref{yafu}), conducive to the type of results illustrated in 
\S2 here, is more recent. The remarkable computation of the
rough Laplacian of the second fundamental form of an immersion by Simons 
in 1968 had been used primarily to analize minimal immersion into the 
sphere, drawing little connections with the induced metric on the immersed
submanifold. In 1973, Riley computed the variation
of functionals of density of the form $f(S_1,\ldots, S_n)$, $S_i$ the
ith elementary symmetric function of the principal curvatures of a hypersurface
in $n+1$ space form of curvature $c$, which for dimensional reasons, can be
used easily to find the variation of $\Pi_{f_g}(M)$ for a surface $M$ in 
$\mb{S}^3$. But the variations of the extrinsic functionals in (\ref{yafu}),  
under sufficiently regular deformations of the embedding into an 
arbitrary fixed background, are from as recently as 2004, from which the 
analysis
of the total scalar curvature using the three of them together takes off.
This analysis had been carried out by 
many (and continues to be) using an intrinsic approach, a hard setting when 
considering its 
deformations on a fixed differentiable manifold with a large space of conformal 
classes, which hides the transformation rules of variations in 
a given conformal class of the extrinsic functionals that combine to the 
intrinsic one,  of importance each in our approach,
and which with our  extension of the notion to manifolds of dimensions one and 
two also, have properties that fit in a dimension independent unifying frame.
The splitting theorem of Cheeger \& Gromoll, which 
serves significantly to distinguish properties of the 
fundamental groups of geometrized manifolds of contractible models and 
vanishing sigma invariant that are of Kazdan-Warner type II and III, dates 
all the way back to 1971. 

We apply the technical results we have derived by the analysis of the alluded 
extrinsic functionals to either recall, if already done elsewhere, 
or to determine the Kazdan-Warner type and to compute the sigma invariant
of any 3d closed manifold $M$ with Thurston's geometric model. When  
this invariant is not realized, we identify sequence(s) of 
conformal classes $[g_j^Y]$ of Yamabe representative $g_j^Y$ such that
$\lambda(M,[g_j^Y])$ converges to it.

\subsubsection{Elliptic manifolds}
An elliptic manifold of dimension $n>1$ is finitely covered by the $n$-sphere
and is of the $M=\mb{S}^n/\Gamma_M$ where $\Gamma_M$ is a finite subgroup of 
$\mb{O}(n+1)$ acting freely on $\mb{S}^n$. If $n$ is odd,
$\Gamma_M$ is in fact contained in $\mb{S}\mb{O}(n+1)$, and $M$ is orientable.
Any 3d manifold $M$ of finite fundamental group admits
an elliptic model, and the $\sigma$ invariant and optimal volume
of such can had been explicitly computed in terms of the order  
$|\Gamma_M|$ of $\Gamma_M$ \cite[Theorem 6]{sim7}, the argument for 
doing a recasting of
a continuity argument we used earlier for the computation of the invariant and 
optimal volume of real, complex, and quaternionic projective spaces of any 
dimension \cite[Theorem 5]{sim4}, \cite[Theorem 6]{sim8}. We outline here how
to extend this result to standard Lens spaces of any dimension, bypassing the 
reproduction of the proof.

We let $(L(p, q_0, \ldots, q_n),g_{\Gamma_L})= (\mb{S}^{2n+1}/\Gamma_L,g_{
\Gamma_L})$ be the Lens space in the base of the Riemannian submersion
\begin{equation} \label{eq9} 
\begin{array}{ccc}
\mb{Z}/p\mb{Z} & \hookrightarrow  & (\mb{S}^{2n+1}(r_{L}),g)  \\
& & \downarrow \pi_L  \\ 
& & L(p,q_0,\ldots, q_n),g_{\Gamma_L})  
\end{array}\, , 
\end{equation}
where the action action of $\Gamma_L=\mb{Z}/p\mb{Z}$ on the sphere of radius
$r_L$, $r_L^2=|\Gamma_L|^{\frac{2n+1}{2}}$, is given by 
$(z_0, \ldots ,z_n)\rightarrow (\gamma^{q_0}z_0, \ldots, \gamma^{q_n}z_n)$,   
$\gamma=e^{\frac{2\pi i}{p}}$.
We use elements of the space  
$\mc{S}^{inv}_{\Gamma_L}$ of 
$\Gamma_L$ invariant spherical harmonics homogeneous of degree $|\Gamma_L|$
to realize the metric $g_{\Gamma_L}$ by a minimal isometric embedding
\begin{equation} \label{eq28}
f_{g_{\Gamma_L}}: (L,g_{\Gamma_L}) \rightarrow (\mb{S}^{d^{inv}_{\Gamma_L}},g) 
\hookrightarrow (\mb{R}^{d^{inv}_{\Gamma}+1},\| \, \|^2)
\end{equation}
where $d_{\Gamma_L}^{inv}$ is the dimension of $\mc{S}^{inv}_{\Gamma_L}$.
We then have that
\begin{equation} \label{eq29} 
\begin{array}{rcl}
\mu_{g_{\Gamma_L}}(f_{g_{\Gamma_L}}(L)) & = & 
\omega_{2n+1}\, r_L^{2n+1} \, , 
\vspace{1mm}\\
\| \alpha_{f_{g_{\Gamma_L}}} \|^2 & = & (2n+1)2n\left(1-  
1/\left( |\Gamma_L|^{\frac{2}{2n+1}}r^2_L\right) \right)\, , \\
s_{g_{\Gamma_L}} & = &  (2n+1)2n/\left( |\Gamma_L|^{\frac{2}{2n+1}}r^2_L\right) \, ,  
\end{array}
\end{equation}
respectively. Geometrically, $f_{g_{\Gamma_L}}$ makes $f_{g_{\Gamma_L}}(L)$
and $\mb{S}^{2n+1}(r_L)$ manifolds of the same volume, while relating their
sectional curvatures by the factor $1/|\Gamma_L|^{\frac{2}{2n+1}}$.

\begin{theorem}
If $L:=L(p,q_0,\ldots, q_n)$, we have that
$$
\sigma(L)=\lambda(L, [g_{\Gamma_L}]) = \frac{(2n+1)2n 
\omega_{2n+1}^{\frac{2}{2n+1}}}{p^{\frac{2}{2n+1}}} \, ,
$$
and among all Yamabe metrics $g'$ on $L$ of minimal isometric embedding
$f_{g'}$ and positive scalar curvature $s_{g'}$, $g_{\Gamma_L}$ has the 
largest volume and 
$$
\begin{array}{rcl}
\mc{W}_{f_{g'}}(L)=\mc{W}(L,[g']) & \leq &  
\mc{W}_{f_{g_{\Gamma_L}}}(L)=\mc{W}(L,[g_{\Gamma_L}])=(2n+1)^2 \omega_{2n+1}
r_L^{2n+1}\, , \\ 
\mc{D}_{f_{g'}}(L)=\mc{D}(L,[g']) & \leq &  
\mc{D}_{f_{g_{\Gamma_L}}}(L)=\mc{D}(L,[g_{\Gamma_L}])=(2n+1)(1-2n/|\Gamma_L|^{
\frac{2}{2n+1}}r_L^2)
\omega_{2n+1}r_L^{2n+1} \, .
\end{array}
$$
If $L'=
L(p';q'_1, \ldots, q'_n)$ is such that $p=p'$, $L$ is diffeomorphic to
$L'$ if, and only if, $\mc{S}^{inv}_{\Gamma_L}=\mc{S}^{inv}_{\Gamma_{L'}}$.
\end{theorem}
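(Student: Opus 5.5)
The plan is to recast, for a general standard Lens space, the continuity argument the paper recalls for elliptic 3d manifolds \cite[Theorem 6]{sim7} and for projective spaces \cite[Theorem 5]{sim4}, \cite[Theorem 6]{sim8}. The argument has four steps.

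\emph{Step 1: the embedding data of $g_{\Gamma_L}$.} Pull back the $\Gamma_L$ invariant spherical harmonics of degree $|\Gamma_L|$ on $\mb{S}^{2n+1}(r_L)$. They define a map from $L$ into the unit sphere of $\mc{S}^{inv}_{\Gamma_L}$. This map is equivariant under the normalizer of $\Gamma_L$ in $\mb{O}(2n+2)$, and that normalizer acts transitively on $L$. So the map is isometric up to a constant factor, and by Takahashi's theorem it is minimal. The choice $r_L^2=|\Gamma_L|^{\frac{2n+1}{2}}$ fixes that constant, so the embedding is isometric. The three formulas in (\ref{eq29}) then follow from two ingredients: the Gauss equation for a minimal submanifold of the unit sphere, $s_g=n'(n'-1)-\|\alpha\|^2$ with $n'=2n+1$, and the volume of the covering sphere divided by $p$. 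Substituting $H=0$ into the definitions of $\mc{W}_{f_g}$ and $\mc{D}_{f_g}$ gives the displayed values $\mc{W}=(2n+1)^2\mu$ and $\mc{D}=\mc{W}-s\mu$. Since $f_{g_{\Gamma_L}}$ is minimal and $g_{\Gamma_L}$ is Einstein, it is the Yamabe representative of minimal embedding in its class (Obata's theorem gives uniqueness up to conformal isometries). Hence $\lambda(L,[g_{\Gamma_L}])=s\,\mu^{\frac{2}{2n+1}}$, and substituting gives $(2n+1)2n\,\omega_{2n+1}^{\frac{2}{2n+1}}/p^{\frac{2}{2n+1}}$.

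\emph{Step 2: comparison with any other Yamabe metric of positive scalar curvature.} Let $g'$ be such a metric with minimal $f_{g'}$. Join $[g_{\Gamma_L}]$ to $[g']$ by a path of classes and lift it by Yamabe metrics of minimal embedding, using the homotopy lifting property \cite[Theorem 3]{sim6}. Along the lift $\lambda$ is continuous and stays positive, by the argument of Theorem \ref{thty}. Pass to the $p$-fold cover $\mb{S}^{2n+1}$, and combine the $\Gamma_L$-invariant form of Aubin's bound (\ref{aub}) with Obata's characterization of the round metric. This should give $\lambda(L,[g'])\le\sigma(\mb{S}^{2n+1})/p^{\frac{2}{2n+1}}$, with equality only for the round quotient. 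Minimality gives $\mc{W}_{f_{g'}}=(2n+1)^2\mu_{g'}$. Reading the continuity argument as in \cite{sim4} then yields $\mu_{g'}\le\mu_{g_{\Gamma_L}}$, which gives both inequalities for $\mc{W}$ and $\mc{D}$ and hence $\sigma(L)=\lambda(L,[g_{\Gamma_L}])$.

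\emph{Step 3: the diffeomorphism criterion.} Suppose $L\cong L'$ with $p=p'$. Realize the diffeomorphism through $\Gamma_L$ and $\Gamma_{L'}$ being conjugate in $\mb{O}(2n+2)$, via de Rham's and Reidemeister's results for Lens spaces; the conjugating element then carries one invariant subspace of harmonics onto the other. Conversely, suppose $\mc{S}^{inv}_{\Gamma_L}=\mc{S}^{inv}_{\Gamma_{L'}}$. Then the two images under (\ref{eq28}) coincide, and $f_{g_{\Gamma_{L'}}}^{-1}\circ f_{g_{\Gamma_L}}$ is an isometry, in particular a diffeomorphism.

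\emph{Main obstacle.} I expect the hardest part to be Step 2, the inequality $\sigma(L)\le\lambda(L,[g_{\Gamma_L}])$. Aubin's bound only holds on the sphere, and lifting a non-round class to the cover does not directly control $\lambda$ on $L$. I would first try the continuity argument of \cite[Theorem 6]{sim7}: track the volume of the minimal Yamabe representatives along the lift, and use Theorem \ref{am} to rule out leaving the constant-curvature stratum while keeping the largest volume. A second, lesser difficulty is the converse in Step 3, since equal sets of invariant harmonics must be shown to force equal $q_i$ up to the Reidemeister equivalences.
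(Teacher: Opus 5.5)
Your outline follows the paper's own route. The paper gives only the set-up (\ref{eq9})--(\ref{eq29}) and defers the rest to the continuity argument of \cite{sim4} and \cite{sim7}. But your Step 2, where all the content lies, is not yet an argument, and the tools you name cannot supply it.

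\textbf{Step 2.} Pulling test functions back to the cover gives $\lambda(\mb{S}^{2n+1},[\pi_L^*g'])\le p^{\frac{2}{2n+1}}\lambda(L,[g'])$. That is the wrong direction for bounding $\lambda(L,[g'])$ from above. The $\Gamma_L$-equivariant Yamabe constant of $[\pi_L^*g']$ equals $p^{\frac{2}{2n+1}}\lambda(L,[g'])$, and its Aubin-type bound is $p^{\frac{2}{2n+1}}\sigma(\mb{S}^{2n+1})$, because the orbits have $p$ points. So it returns only (\ref{aub}) on $L$ itself. Obata's theorem identifies the constant scalar curvature metrics in $[g_{\Gamma_L}]$ but says nothing about other classes.

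The inequality $\lambda(L,[g'])\le\sigma(\mb{S}^{2n+1})/p^{\frac{2}{2n+1}}$ for every class is exactly Schoen's conjecture that round space forms realize $\sigma$. It is known only in special cases, e.g.\ $\mb{P}^3(\mb{R})$ (Bray--Neves), so ``this should give'' hides the whole theorem.

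Your fallback does not close the gap either. Granting $\mu_{g'}\le\mu_{g_{\Gamma_L}}$, minimality of $f_{g'}$ only yields $s_{g'}<(2n+1)2n$. Hence $\lambda(L,[g'])<(2n+1)2n\,\mu_{g_{\Gamma_L}}^{\frac{2}{2n+1}}$, a bound strictly weaker than $\lambda(L,[g_{\Gamma_L}])$, and it gives no control of $\mc{D}_{f_{g'}}=\mc{W}_{f_{g'}}-s_{g'}\mu_{g'}$. Nor does the argument of Theorem \ref{thty} keep $\lambda$ positive along your lift. That argument concerns manifolds whose scalar flat metrics are Ricci flat, and on $L$ a path joining two positive classes can cross classes with $\lambda<0$.

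\textbf{Step 1.} The normalizer of $\Gamma_L$ in $\mb{O}(2n+2)$ acts transitively only when all $q_i\equiv\pm q_0 \pmod p$; for instance $L(5;1,2)$ is not homogeneous. Even in the transitive case the isotropy $\mb{U}(n)$ acts reducibly on $\mb{R}\oplus\mb{C}^n$. So transitivity only makes the pulled-back metric of Berger type, not a constant multiple of the round one, and Takahashi's theorem cannot be applied as you do.

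In fact the construction fails outright in a basic case. Take $q_i\equiv 1$ and $p\geq 3$ odd. The invariant harmonics of degree $p$ are then $H^{p,0}\oplus H^{0,p}$, whose real elements restrict to each Hopf fiber as ${\rm Re}\,(c\,e^{ip\theta})$. So any map $F$ built from them has fiber averages $\overline{|dF(\xi)|^2}=p^2\,\overline{|F|^2}$, with $\xi$ the Hopf field. An isometric minimal immersion of a round quotient of $\mb{S}^{2n+1}(R)$ into a unit sphere would need $|F|\equiv 1$ and $|\xi|^2=R^2$, and by Takahashi $R^2=p(p+2n)/(2n+1)$. Together these force $p=1$.

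\textbf{Step 3.} Conjugating by $A\in\mb{O}(2n+2)$ only gives $\mc{S}^{inv}_{\Gamma_{L'}}=\mc{S}^{inv}_{\Gamma_L}\circ A^{-1}$, not equality. Literal equality genuinely fails: $L(5;1,2)$ and $L(5;2,1)$ are isometric, but the degree $5$ harmonic part of $z_0^2\bar{z}_0z_1^2$ is invariant under $\Gamma_{L(5;1,2)}$ and not under $\Gamma_{L(5;2,1)}$. So the criterion can only be proved modulo $\mb{O}(2n+2)$.

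\textbf{The value of $\mc{D}$.} With your data, $\mc{W}-s\mu=(2n+1)(2n+1-2n/|\Gamma_L|^{\frac{2}{2n+1}}r_L^2)\omega_{2n+1}r_L^{2n+1}$. This is not the displayed value of $\mc{D}$, which appears to contain a misprint.
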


\begin{remark}
By \cite[\S 5]{sull}, any differential graded algebra $(A,d_A)$ with 
$H^0(A,d_A)=\mb{R}$ has a unique minimal model, 
so the differential algebra of forms of a closed manifold $M$ has a unique 
minimal model. Even in the nonsimply conneced case, this is an invariant of 
the rational homotopy type of $M$. As a rational homotopy sphere, 
the minimal model $\Lambda^{L(p;q_0,\ldots, q_n)}(z^L_{2n+1})$, $dz^L_{2n+1}
=0$, of $L(p;q_0, \ldots, q_n)$ is isomorphic to the minimal model
$\Lambda^{\mb{S}^{2n+1}}(z_{2n+1})$, $dz_{2n+1}=0$ of $\mb{S}^{2n+1}$, 
and the homomorphism of these algebras induced by the map $\pi_L$ in 
(\ref{eq9}) is, at the level of generators,  given by
$m(\pi_L)(z^L_{2n+1}) = cz_{2n+1}$ for some $0\neq c \in \mb{Q}$, naturally 
taken as $1$ if we think of $z^L_{2n+1}$ as the volume form of 
$f_{g_{\Gamma_L}}(L(p;q_0,\ldots,q_n))$, or $1/p$ if 
we think of it as the volume form of $L(p;q_0,\ldots,q_n)$ itself.
Naturally (as that was not its purpose), this model does capture the 
diffeomorphism type of the Lens space, which depends on the $q_i$s, or even 
its integral cohomology. We have 
a remnant $\mb{S}^1=\mb{S}^1/(\mb{Z}/p)$ free action that yields a fiber
projection $L(p;q_0,\ldots, q_n) \stackrel{\pi_p}{\rightarrow }
\mb{P}^n(\mb{C})$ compatible with the fiber bundle projection 
$\mb{S}^{2n+1}\stackrel{\pi^{\mb{P}}}{\rightarrow} \mb{P}^n(\mb{C})$ in
that $\pi^{\mb{P}}=\pi_p \circ \pi_L$, and we may then 
use the nonminimal model 
$\Lambda^{L(p;q_0,\ldots, q_n)}_{\mb{Z}}(u_1, v_2,u_{2n+1},z^L_{2n+1})$,  
$du_1=pv_2$, $dv_2=0$, $du_{2n+1}=v_2^{n+1}$, $dz^L_{2n+1}=0$ of correct 
integer cohomology, 
and relate it to the reduction mod $p$ homomorphism in cohomology 
induced by $\pi_p$ from the minimal model $\Lambda^{\mb{P}^n(\mb{C})}(x_2,
y_{2n+1})$, $dx_2=0$, $dy_{2n+1}=x_2^{n+1}$, of the projective space, 
which at the level of generators is then given by 
$m(\pi_p)(y_{2n+1})=0$, $m(\pi_p)(x_2)=v_2$. With the natural definitions on
these nonminimal models, we obtain homomorphism such that
$m(\pi^{\mb{P}})= m(\pi_L) \circ m(\pi_p)$, but are still unable to 
distinguish nondiffeomorphic but homotopy equivalent Lens
spaces, for instance, $L(7;1,\ldots, 1, 1)$ and $L(7;1,\ldots, 1,2)$. 
It is conceivable that the natural representation of $\pi_1(L(p;q_0, 
\ldots, q_n))$ on $\mb{G}\mb{L}(\mc{S}_{\Gamma_{L}}^{inv})$ could be used
to produce a geometric realization of the Reidemeister torsion of the Lens 
space and associated models for its graded algebra of forms and that of
$\mb{P}^n(\mb{C})$, so that the homomorphism induced by 
the remnant action $\pi_p$ captures the winding of the components of a 
$\pi^{\mb{P}}$ orbit point characterizing $L$, 
even if the real cohomology of this model for $L$ vanishes.   
\end{remark}

\subsubsection{Products of standard spheres: The $\mb{R}\times \mb{S}^{n-1}$
model geometry}
For $n\geq 2$ and $1\leq k < n$, we consider the manifold $M^n_k=
\mb{S}^k\times \mb{S}^{n-k}$ with the standard smooth product structure of 
the factors, which is unique if $n\leq 4$. 
$M^n_k$ is geometrically modeled by the product of 
the geometric models of the factors. 
We denote by $g_{\mb{S}^{n,k}}$ the product metric on  
$\mb{S}^{n,k}:=\mb{S}^k(\sqrt{k/n})\times \mb{S}^{n-k}(\sqrt{n-k/n})$. It is 
a Yamabe metric in its conformal class. 

If $n=2$, we have that $\sigma(M^2_1)=0$, with its canonical realizer being
the Clifford torus $(\mb{S}^{2,1},g_{\mb{S}^{2,1}})$ \cite[\S 2]{sim7}.
If on the other hand $2\leq k \leq n-2$, 
if $[g]$ is a conformal class of metrics on $M^n_k$, then 
$\lambda(M^n_k,[g]) \leq  \lambda(\mb{S}^{n,k},[g_{\mb{S}^{n,k}}]):=
\sigma(M^n_k)$, 
and the equality is achieved if, and only if, $[g]=[g_{\mb{S}^{n,k}}]$; the
realizing class carries an Einstein metric in it if, and only if, $2k=n$
\cite[Theorem 2, Corollary 3]{sim4}. We thus obtain an infinite number of 
$M^n_k$s whose $\sigma$ invariant is realized by a conformal class with 
no Einstein metric representative in it. Notice that by Theorem \ref{th2}, 
$M^4_2$ does not carry Einstein metrics of nonpositive 
scalar curvature, and any scalar flat metric on it, which do exists,
is not Ricci flat.

The remaining case to consider is relevant to the model geometry 
$\mb{R}\times \mb{S}^{n-1}$. Since ${\rm Isom}\,( \mb{R}\times \mb{S}^{n-1}) =
{\rm Isom}\, \mb{R}\times {\rm Isom}\, \mb{S}^{n-1}=(\mb{R}\rtimes \mb{O}(1))
\times \mb{O}(n)$, if $\Gamma$ is either $\< T^{
\mb{R}}_{x_0}\times \BOne_{\mb{S}^{n-1}}\>$ or
$\< T^{ \mb{R}}_{x_0}\times \varphi_{\mb{S}^{n-1}}\>$, $\varphi$ an
orientation reversing diffeomorphism of $\mb{S}^{n-1}$, then the quotient 
manifold  $(\mb{R}\times \mb{S}^{n-1})/\Gamma$ is, correspondingly, the trivial 
sphere bundle $\mb{S}^1 \times \mb{S}^{n-1}= M^n_1 \cong M^n_{n-1}$, or the 
nontrivial one  $\mb{S}^1 \widehat{\times}\mb{S}^{n-1}:= \widehat{M}^n_1$, 
and we have that $\sigma(M^n_1)=\sigma(\mb{S}^n)= \sigma 
(\widehat{M}^n_1)$, with the invariant not achieved by any conformal class in 
either case. The result for $M^n_1$ was proved by Schoen in his precursory
work on the subject \cite{sc2}, and his argument can be modified readily to 
draw the conclusion for $\widehat{M}^n_1$ as well. We obtain an additional 
example of
manifold with this geometric model by taking $\Gamma$ to be
the group $\<T^{\mb{R}}_{x_0}\times A_{\mb{S}^{n-1}}\>$, in which case the 
quotient is $\mb{S}^1 \times \mb{P}^{n-1}(\mb{R})$, 
and again, Schoen's original argument can be adjusted to show that 
$\sigma(\mb{S}^1 \times \mb{P}^{n-1}(\mb{R})) =\sigma(\mb{S}^n)/2^{\frac{2}{n}}
$, without it being achieved by any class. On the other hand,
$\mb{P}^n(\mb{R})\# \mb{P}^n(\mb{R})$ may be constructed as the quotient
of $\mb{S}^{n-1}\times \mb{S}^1$ by the equivalence relation 
$(x,z) \sim (A(x),\bar{z})$, where the conjugation is that on
$\mb{U}(1)\cong \mb{S}^1$ along a great circle connecting the antipodal points
$x$ and $A(x)$. We have that   
$\sigma (\mb{P}^{n}(\mb{R})\#\mb{P}^{n} (\mb{R})) \geq \min\{
\sigma (\mb{P}^{n}(\mb{R})), \sigma(\mb{P}^{n}(\mb{R}))\}$ \cite{koba},  
while using the nontrivial Riemannian double covering map that  
$\sigma (\mb{P}^{n}(\mb{R})\#\mb{P}^{n} (\mb{R})) \leq \sigma( \mb{S}^1\times
\mb{P}^{n-1}(\mb{R}))=    
\sigma(\mb{S}^n)/ 2^{\frac{2}{n}}$ \cite{au3},  so
$\sigma (\mb{P}^{n}(\mb{R})\#\mb{P}^{n} (\mb{R}))=
\sigma (\mb{P}^{n}(\mb{R}))=\sigma(\mb{S}^n)/2^{\frac{2}{n}}$, with the 
invariant again not achieved by any class, as in the previous cases.    
When $n=3$, the exhibited four manifolds constitute all the 
3d manifolds of model geometry $\mb{R}\times \mb{S}^2$, the first and last
orientable, while the second and third not so \cite{brne,akne}.  
 
\subsubsection{Euclidean manifolds}
Every closed Euclidean $n$-manifold is finitely covered by the $n$ torus,
and so they correspond to $n$-dimensional crystallographic groups 
acting freely on $\mb{R}^n$. Thus, $M=\mb{R}^n/\pi_1(M)$ where $\pi_1(M)$ is
a torsion free discrete subgroup of ${\rm Isom}(\mb{R}^n, g_{\mb{R}^n})$
that constains a subgroup of finite index isomorphic
to $\mb{Z}^n$. These are manifolds of Kazdan-Warner type II, and so they carry
a (Ricci) flat metric $g$ of minimal isometric
embedding $f_g$ that is optimal in that $\mc{W}_{f_g}(M) \leq 
\mc{W}_{f_{g'}}(M)$ for any other (Ricci) flat $g'$, 
and which provides $M$ with a canonical volume  
\cite[Theorem 2]{sim7}. 
We have used this characterization to show explicitly that $T^n$  
and any of the ten 3d closed Euclidean 
manifolds are of Kazdan-Warner type II, computing in each case their
canonical volume $\mu_g(M)$ \cite[Theorems 3 \& 5]{sim7}. 

\subsubsection{{\rm 3d} manifolds with nilgeometry}
The geometry provided by the 2-step nilpotent Heisenberg group 
$H_n = \mb{C}^n \times \mb{R}$ is that of a line bundle $\mb{R} 
\hookrightarrow H_n \stackrel{\pi}{ \rightarrow } \mb{C}^n \cong \mb{R}^{2n}$
over Euclidean $\mb{R}^{2n}$ space. The isometry group is generated by
the lifts of isometries of $\mb{R}^{2n}$ and translations in the vertical
direction, and has dimension $n(2n+1)+1$. There exists a neighborhood $U$
of the identity element in $H_n$. with $H_n \cap U$ connected, and such that 
any discrete subgroup $\Gamma$ of $H_n$ generated by $\Gamma \cap U$ is 
cocompact. Thus, cofinite lattice subgroups of $H_n$ are uniform, and 
a manifold with this geometry is of the form $M=H_n/\Gamma_M$ for $\Gamma_M$ 
a nilpotent 2-step discrete subgroup of $H_n$ that contains a subgroup of finite
index isomorphic to the integer Heisenberg group. Since the entire group acts 
on itself by left translations, which are isometries, $M$ is provided with
many left invariant metrics that are invariant under $\Gamma_M$, all of which
have constant scalar curvature. We thus say that $(M,\Gamma_M)$ is a 
Heisenberg nilmanifold of underlying fundamental group $\Gamma_M$, without
specifying any of these alluded $\Gamma_M$ invariant metrics on it. 

\begin{theorem}
Any $2n+1$ manifold $M^{2n+1}=H_{n\geq 1}/\Gamma_M$ of Heisenberg 
geometry is of Kazdan-Warner type {\rm III} and has $\sigma(M)=0$, but
no conformal class of metrics on $M$ achieves such a value. In particular,
all {\rm 3d} manifolds of nilgeometry are orientable of Kazdan-Warner type 
{\rm III} and vanishing nonachievable sigma invariant.  
\end{theorem}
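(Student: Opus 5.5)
The plan is to reduce the type statement to Corollary \ref{co3}, then produce an explicit family of $\Gamma_M$ invariant left invariant metrics whose Yamabe quotients tend to $0$ from below.

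\emph{Type {\rm III}.} The model $H_n$ is diffeomorphic to $\mb{R}^{2n+1}$, so $M=H_n/\Gamma_M$ has contractible universal cover. To apply Corollary \ref{co3}, I must show that $M$ carries no Ricci flat metric.
\begin{itemize}
\item Suppose $g_0$ were Ricci flat. By the Cheeger--Gromoll splitting theorem, as recalled in the proof of Theorem \ref{th2}, the universal cover would split isometrically as $\mb{R}^k\times \bar{M}$ with $\bar{M}$ compact and simply connected.
\item Contractibility forces $\bar{M}$ to be a point. Hence $g_0$ is flat and $\pi_1(M)$ is a Bieberbach group, so it is virtually abelian.
\item But $\Gamma_M$ contains a finite index subgroup isomorphic to the integer Heisenberg group. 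This subgroup is nilpotent of step exactly $2$, so no finite index subgroup of it is abelian: two elements with nonzero ${\rm Im}\langle z,z'\rangle$ have a nontrivial commutator, which is central of infinite order.
\end{itemize}
This contradiction shows there is no Ricci flat metric. Corollary \ref{co3} then gives that $M$ carries no scalar flat metric and is of Kazdan--Warner type III. In particular $\sigma(M)\leq 0$, and since no class contains a scalar flat Yamabe metric, every $\lambda(M,[g])$ is strictly negative.

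\emph{$\sigma(M)=0$.} I use the left invariant metrics $g_\varepsilon$ obtained by keeping the horizontal Euclidean metric and multiplying the vertical direction by $\varepsilon^2$. They are invariant under left translations, but $\Gamma_M$ may contain non-translational isometries. If so, I pass to the translation part, which has finite index, noting that the rescaling commutes with lifts of Euclidean isometries of the base and with vertical translations; so the $g_\varepsilon$ descend to $M$.

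The standard Milnor-type computation for a $2$-step nilpotent metric Lie algebra gives
\[
s_{g_\varepsilon}=-\tfrac14\sum_{i,j}|[X_i,X_j]|^2_{g_\varepsilon}=-c_n\varepsilon^2
\]
for a constant $c_n>0$ depending only on the structure constants. The volume satisfies $\mu_{g_\varepsilon}(M)=\varepsilon\,\mu_{g_1}(M)$. Since $s_{g_\varepsilon}$ is constant and negative, $g_\varepsilon$ is a Yamabe metric, and with $N=2n+1$,
\[
\lambda(M,[g_\varepsilon])=s_{g_\varepsilon}\mu_{g_\varepsilon}(M)^{\frac{2}{N}}=-c_n\varepsilon^{2+\frac{2}{N}}\mu_{g_1}(M)^{\frac{2}{N}}\longrightarrow 0^- .
\]
Hence $\sigma(M)=\sup_{[g]}\lambda(M,[g])=0$.

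\emph{Non-achievability.} If some class had $\lambda(M,[g])=0$, its Yamabe metric would be scalar flat, which was excluded above.

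\emph{The $3$d orientability claim.} Every element of ${\rm Isom}(H_1,g)$ is a lift of a Euclidean isometry of the base, composed with a vertical translation. A lift of an orientation reversing isometry of $\mb{R}^2$ must reverse the vertical direction, because it preserves the horizontal distribution and the curvature $d\theta$ of the connection up to sign. It therefore preserves the total orientation of $H_1$. Thus $\Gamma_M$ acts by orientation preserving maps and $M^3$ is orientable. Together with the above, this gives the three dimensional conclusion.

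\emph{Main obstacle.} I expect the delicate step to be the exclusion of Ricci flat metrics, that is, the passage from Cheeger--Gromoll to the virtually abelian conclusion for an aspherical manifold. The second delicate point is making the descent of $g_\varepsilon$ work for lattices $\Gamma_M$ that are not purely translational. If the latter fails, I would fall back on the Riemannian finite cover together with the fact that $\sigma\leq 0$ and negativity are preserved under the covering estimates used earlier in the paper.
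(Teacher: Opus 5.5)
Your proposal is correct, and for the type statement it follows the paper's route: Cheeger--Gromoll together with contractibility of $H_n$ forces a Ricci flat metric to be flat. A lattice containing the integer Heisenberg group with finite index is not virtually abelian. Corollary \ref{co3} then gives type III, and non-achievability follows at once (a step the paper leaves implicit).

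The genuine difference is the family realizing $\sigma(M)=0$. The paper uses Milnor's degeneration: the left invariant metric $g_{b_\varepsilon}$ making $\{\varepsilon b_1,\varepsilon b_2,\varepsilon^2 b_3,\ldots,\varepsilon^2 b_{2n+1}\}$ orthonormal. Its scalar curvature converges to that of the limit algebra with only $[e_1,e_2]=e_3$, a fixed negative number, and the paper combines this with the asserted scaling $\mu_{g_{b_\varepsilon}}(M)=\varepsilon^{4n}\mu_{g_b}(M)$. You instead keep the horizontal metric and shrink only the center. The $2$-step formula then gives $s_{g_\varepsilon}=-c_n\varepsilon^2$, the volume is $\varepsilon\,\mu_{g_1}(M)$, and $\lambda(M,[g_\varepsilon])=-c_n\varepsilon^{2+2/N}\mu_{g_1}(M)^{2/N}\nearrow 0$.

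Your route is fully explicit, and it is also the one that actually delivers the limit. In the paper's family the $b_i$ have $g_{b_\varepsilon}$-lengths $\varepsilon^{-1}$ and $\varepsilon^{-2}$, so the volume grows like $\varepsilon^{-4n}$. Since $s_{g_{b_\varepsilon}}$ stays near a negative constant, one gets $\lambda(M,g_{b_\varepsilon})\to-\infty$ as $\varepsilon\searrow 0$, not $0$.

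There are two smaller differences. First, you allow $\Gamma_M$ to contain non-translational isometries, whereas the paper takes $\Gamma_M\subset H_n$. No finite-index passage is needed for the descent you worried about: every isometry of $(H_n,g)$ is a left translation composed with an automorphism preserving the center, its orthogonal complement and the metric on each, hence is an isometry of every $g_\varepsilon$. Second, your orientability argument (reversing the base orientation forces reversing the fiber) is an explicit form of the paper's remark that $M$ inherits a contact structure and that contact $(2n+1)$-manifolds with $n$ odd are orientable.
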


{\it Proof}. Suppose that $M$ carries a Ricci flat metric $g$.  By
the Cheeger-Gromoll splitting theorem \cite{chgr}, its universal cover is 
isometric to the standard flat Euclidean space $\mb{R}^{2n+1}$, which. 
as a manifold. is diffeomorphic to $H_n$, and so
$\pi_1(M) \cong \Gamma_M$ must have a subgroup of finite index 
isomorphic to $\mb{Z}^{k=2n+1}$. But no 
discrete subgroup $\Gamma$ of $H_n$ can have a commutative subgroup 
of finite index isomorphic to $\mb{Z}^{k}$ for any $0 \leq k \leq 2n+1$. 
For if $\Gamma'$ is one such, then $[\Gamma',\Gamma']$ must be nontrivial and 
intersects the center in an infinite cyclic group. Thus, no such Ricci
flat metric $g$ exists. By Corollary \ref{co3}, $M$ does not carry zero 
scalar curvature metrics either, and it is a manifold of Kazdan-Warner type III.

The Lie algebra of $H_n$ contains linearly independent vectors $x,y,z$ such
$[x,y]=z$. By \cite[Theorem 3.3]{miln2}, if $b=\{b_1, \ldots, b_{2n+1}\}$ is 
a basis of the Lie algebra with $b_1=x$, $b_2=y$, $b_3=z$, if $\varepsilon>0$,
the left invariant metric that makes $b_{\varepsilon}=\{ \varepsilon b_1, 
\varepsilon b_2, \varepsilon^2 b_3, \ldots, \varepsilon^2 b_{2n+1}\}$ 
orthonormal defines a Lie algebra structure on $T_{0}H_n$ that has 
a well defined nilpotent but noncommutative limit as $\varepsilon \rightarrow
0$, provided with a metric of negative scalar curvature. By continuity, 
this scalar curvature is close to but bounded above by a negative 
constant for all $\varepsilon$ sufficiently small. If we denote 
by  $g_{b_{\varepsilon}}$ the induced metric on $M$, and by $g_b$ the metric 
corresponding to $\varepsilon =1$, we have that $\mu_{g_{b_{\varepsilon}}}(M) 
=\varepsilon^{4n}\mu_{b}(M)$. Hence, for small enough $\varepsilon$, 
$$
\lambda(M,g_{b_{\varepsilon}})=s_{g_{b_{\varepsilon}}}
(\mu_{g_{\varepsilon}}(M))^{
\frac{2}{2n+1}} =s_{g_{b_{\varepsilon}}}\varepsilon^{\frac{8n}{2n+1}}(\mu_{g_{b
}}(M))^{\frac{2}{2n+1}} < 0 \, ,
$$
and becomes arbitrarily close to $0$ as $\varepsilon \searrow 0$.

Finally, we observe that $H_n$ is naturally a contact $2n+1$ manifold, and 
any one such is orientable if $n$ is odd. In dimension 3, the only nilpotent 
non Abelian group is $H_1$. The last statement now follows.
\qed

\subsubsection{{\rm 3d} manifolds with solvgeometry}
By \cite[Theorem 3.1]{miln2}, every left invariant metric on a solvable
Lie group is either flat or else has strictly negative scalar curvature, 
and by \cite[Lemma 6.2]{miln2}, any solvable Lie group that admits uniform
lattice subgroups is unimodular. So on any manifold with solvgeometry, 
a left invariant metric is either flat or it has strictly negative 
scalar curvature, and if the group is noncommutative, it has left invariant 
metrics of negative scalar curvature. 

The 3d group $S_2$ of 
\S3.1.3 b) defines a distinguished solvmanifold geometric model that is
neither commutative nor nilpotent. The isotropy group 
${\rm Isom}_p S_2$ of any point is trivial, and so if $g$ is a left invariant
metric that makes the eigenspaces of the action of $t$ on $\mb{R}^2$ orthogonal,
the identity component of this group, which embeds into 
${\rm Isom}(S_2)$, is in fact equal to $S_2$.
We say that a 3d manifold has $S_2$ solvgeometry if 
$M=S_2/\Gamma_M$ for a cocompact lattice subgroup $\Gamma_M$ of $S_2$.  

\begin{theorem}
If $M=S_2/\Gamma_M$ is a {\rm 3d} closed manifold of $S_2$ solvgeometry and 
fundamental
group $\Gamma_M$, then $M$ is a manifold of Kazdan-Warner type {\rm III}, and
has vanishing nonachievable sigma invariant,
\end{theorem}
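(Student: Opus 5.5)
The plan is to follow the pattern of the proof for Heisenberg nilmanifolds, in three steps: exclude Ricci flat metrics, then invoke Corollary \ref{co3}, then produce metrics whose Yamabe invariants tend to $0$ from below.

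\textbf{Step 1: no Ricci flat metrics.} The model $S_2=\mb{R}^2\rtimes\mb{R}$ is diffeomorphic to $\mb{R}^3$, so $M=S_2/\Gamma_M$ has contractible universal cover. Suppose $M$ carried a Ricci flat metric. By the Cheeger--Gromoll splitting theorem, as used in the proof of Theorem \ref{th2}, the universal cover would then be isometric to flat $\mb{R}^3$. Hence $\Gamma_M\cong\pi_1(M)$ would contain a finite index subgroup isomorphic to $\mb{Z}^3$, i.e.\ it would be virtually abelian.

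This is impossible for a cocompact lattice in $S_2$. Such a lattice sits in a split extension
$$
0\rightarrow \mb{Z}^2\rightarrow \Gamma_M \rightarrow \mb{Z}\rightarrow 0 ,
$$
where the generator of $\mb{Z}$ acts on $\mb{Z}^2=\Gamma_M\cap\mb{R}^2$ by a hyperbolic matrix $A\in\mb{S}\mb{L}(2,\mb{Z})$. Its eigenvalues $t^{\pm1}$ satisfy $|t|\neq 1$, because $\Gamma_M$ is cocompact and not contained in a nilpotent or abelian subgroup. For every $k\neq 0$, the element $A^k$ is again hyperbolic with no eigenvalue equal to $1$. Consequently no finite index subgroup of $\Gamma_M$ is abelian: commutators $[\gamma^k,v]=(A^k-\BOne)v$ are nonzero for $v\in\mb{Z}^2\setminus\{0\}$. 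Equivalently, $\Gamma_M$ has exponential growth, while virtually abelian groups have polynomial growth.

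\textbf{Step 2: Kazdan-Warner type.} Step 1 shows that $M$ carries no Ricci flat metric. Since $M$ has contractible universal cover, Corollary \ref{co3} applies: $M$ carries no scalar flat metrics and is of Kazdan-Warner type III. It follows that $\lambda(M,[g])<0$ for every $[g]$, so $\sigma(M)\leq 0$. It also follows that $\sigma(M)=0$ cannot be achieved by any class, since that would require a class with $\lambda=0$, hence a scalar flat Yamabe metric.

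\textbf{Step 3: $\sigma(M)=0$.} This is the main obstacle. It suffices to construct a family of left invariant metrics $g_\varepsilon$ on $S_2$, passing to $M$, with $s_{g_\varepsilon}\mu_{g_\varepsilon}(M)^{2/3}\nearrow 0$. Then $\lambda(M,[g_\varepsilon])\leq\lambda(M,g_\varepsilon)\to 0$, and combined with $\lambda<0$ this gives $\sigma(M)=0$.

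I would take the basis $e_1,e_2$ of the eigenlines in $\mb{R}^2$ and $e_3$ generating the $\mb{R}$ factor, so that $[e_3,e_1]=e_1$ and $[e_3,e_2]=-e_2$. Consider the left invariant metric making $\{e_1,e_2,\varepsilon^{-1}e_3\}$ orthonormal, i.e.\ rescaling the base direction. With $f_3=\varepsilon^{-1}e_3$, the brackets become $[f_3,e_1]=\varepsilon^{-1}e_1$ and $[f_3,e_2]=-\varepsilon^{-1}e_2$. Milnor's formulas for unimodular $3$d groups then give scalar curvature of order $-\varepsilon^{-2}$. The volume scales like $\varepsilon^{-1}$, so this particular scaling yields $\lambda\sim-\varepsilon^{-2}\varepsilon^{-2/3}$, which goes the wrong way.

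The fix is to rescale in the other direction, making $\{e_1,e_2,\varepsilon e_3\}$ orthonormal so that the structure constants are multiplied by $\varepsilon$. Then $s\sim -c\,\varepsilon^2$ and $\mu\sim\varepsilon\,\mu_0$, so
$$
\lambda\sim -c\,\varepsilon^{2+2/3}\rightarrow 0 .
$$
Geometrically this is the collapse along the $\mb{R}$ direction onto the flat $T^2$ fibers, with bounded curvature. Since $\Gamma_M$ is a lattice in $S_2$ and left invariant metrics descend to $M$, this family is admissible. I expect checking Milnor's formula, $s=-\tfrac12(\lambda_1^2+\lambda_2^2+\lambda_3^2)+(\lambda_1\lambda_2+\lambda_2\lambda_3+\lambda_1\lambda_3)$ for the structure constants $(\varepsilon,-\varepsilon,0)$, to give $s=-2\varepsilon^2$, which completes the argument.
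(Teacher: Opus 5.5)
Your Steps 1 and 2 follow the paper: Cheeger--Gromoll would make $\Gamma_M$ virtually $\mb{Z}^3$, which $\mb{Z}^2\rtimes_A\mb{Z}$ with hyperbolic $A$ is not, and then Corollary \ref{co3} applies. You give more detail than the paper does. One small correction: $|t|\neq 1$ holds simply because the generator of the $\mb{Z}$ quotient has nonzero $\mb{R}$ component.

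\textbf{The gap is in the final deduction of Step 3.} From $\lambda(M,[g_\varepsilon])\leq\lambda(M,g_\varepsilon)\to 0$ and $\lambda(M,[g_\varepsilon])<0$, nothing follows about $\sup_{[g]}\lambda(M,[g])$. An upper bound does not prevent the infimum over the class from being very negative. You need the reverse inequality, and in fact equality holds. Since $g_\varepsilon$ is homogeneous, $s_{g_\varepsilon}$ is the negative constant $-2\varepsilon^2$. A metric of constant negative scalar curvature is, up to scale, the unique constant scalar curvature metric in its class (maximum principle), hence it is the Yamabe metric. This is the fact invoked at the start of the proof of Theorem \ref{thty}. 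Then $\lambda(M,[g_\varepsilon])=s_{g_\varepsilon}\mu_{g_\varepsilon}(M)^{2/3}$, and the argument closes.

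\textbf{Your volume bookkeeping also needs fixing.} If $\{e_1,e_2,\varepsilon e_3\}$ is orthonormal, then $|e_3|_{g_\varepsilon}=\varepsilon^{-1}$. So $\mu_{g_\varepsilon}(M)=\varepsilon^{-1}\mu_0$, not $\varepsilon\mu_0$; your discarded first attempt has the same inversion. Hence $\lambda(M,g_\varepsilon)=-2\varepsilon^{4/3}\mu_0^{2/3}$, which still tends to $0$. Your value $s=-2\varepsilon^2$ from Milnor's formula is correct. After the harmless rescaling to $\varepsilon^2 g_\varepsilon$, the base circle keeps its length and $s\equiv -2$. It is the flat $T^2$ fibers that shrink, not the $\mb{R}$ direction.

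\textbf{Your family differs from the paper's.} The paper uses Milnor's rescaling $\{\varepsilon x,\varepsilon y,\varepsilon^2 z\}$ with $[x,y]=z$ and asserts $\mu_{g_{b_\varepsilon}}(M)=\varepsilon^4\mu_{g_b}(M)$. For that family the volume is actually $\varepsilon^{-4}\mu_{g_b}(M)$. The structure constants in the rescaled frame converge to the Heisenberg ones, so $s\to -1/2$ and $\lambda\to-\infty$ as $\varepsilon\searrow 0$. Your explicit fiber-collapsing family, with its exact scalar curvature, is what actually produces classes with $\lambda\to 0^-$, once the Yamabe-metric equality above is added.
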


{\it Proof}. $M$ cannot carry a Ricci flat metric. For otherwise,
 by the Cheeger-Gromoll spliting theorem \cite{chgr}, the universal cover must
be isometric to the standard flat Euclidean space $\mb{R}^{3}$, 
and by the finite volume condition, $\pi_1(M) \cong \Gamma_M$ must have a 
subgroup of finite index isomorphic to $\mb{Z}^{3}$. 
But any lattice subgroup $\Gamma$ of $S_2$ fits into a short exact sequence
$$
0 \rightarrow \mb{Z}^2 \rightarrow \Gamma \rightarrow \mb{Z} \rightarrow 1
$$
but is never virtually Abelian. 
By Corollary \ref{co3}, $M$ does not carry zero scalar curvature metrics 
either, and $M$ is a manifold of Kazdan-Warner type III.

Although the Lie algebra $\mf{s}_2$ is not nilpotent, we can still find 
linearly independent elements $x,y,z$ such that $[x,y]=z$ to  
apply Milnor's argument in the proof of \cite[Theorem 3.3]{miln2}. 
We consider the Lie algebra basis $b=\{b_1=x, b_2=y, b_3=z\}$, and for 
$\varepsilon >0$, 
the left invariant metric $g_{b_{\varepsilon}}$ that makes of 
$b_{\varepsilon}=\{ \varepsilon b_1, \varepsilon b_2, \varepsilon^2 b_3\}$ 
an orthonormal frame. Then, 
for sufficiently small $\varepsilon$, $s_{g_{\varepsilon}}<c<0$, and 
since $\mu_{g_{b_{\varepsilon}}}(M)= \varepsilon^{4} \mu_{g_{b}}(M)$, we 
have that
$$
\lambda(M,g_{b_{\varepsilon}})=s_{g_{b_{\varepsilon}}}
(\mu_{g_{\varepsilon}}(M))^{
\frac{2}{3}} =s_{g_{b_{\varepsilon}}}\varepsilon^{\frac{8}{3}}(\mu_{g_{b
}}(M))^{\frac{2}{3}} < 0 \, ,
$$
a value that becomes arbitrarily close to $0$ as $\varepsilon \searrow 0$.
\qed

\subsubsection{{\rm 3d} manifolds with 
$\widetilde{\mb{P}\mb{S}\mb{L}}(2,\mb{R})$ geometry} \label{ama3}
Any discrete subgroup $\Gamma$ of isometries of 
$\widetilde{\mb{P}\mb{S}\mb{L}}(2,\mb{R})$ is cofinite 
if, and only if, the action on $\mb{H}^2$ of its projection onto 
${\rm Isom}(\mb{H}^2,g_{\mb{H}^2})$ is discrete, cofinite and
has infinite kernel \cite[Corollary 4.7.3]{thurs}.
It follows that a manifold with this geometric model
is of the form $M=\widetilde{\mb{P}\mb{S}\mb{L}}(2,\mb{R})/
\Gamma$ where $\Gamma$ is a cocompact discrete group of 
hyperbolic motions, and no subgroup of finite index in the quotient split, so
$M$ is an orientable Seifert fiber space over an orbifold of negative Euler 
characteristic and nonzero Euler number of the Seifert structure. Such a
a manifold is finitely covered by the unit sphere bundle
$S^1(T\Sigma)$ of a Riemann surface $(\Sigma,g)$ of genus at least two.

\begin{theorem}
Any closed manifold $(M,\Gamma_M)$ of model geometry 
$\widetilde{\mb{P}\mb{S}\mb{L}}(2,\mb{R})$ is orientable, it is a manifold
Kazdan-Warner type {\rm III}, and has vanishing nonachievable sigma invariant. 
\end{theorem}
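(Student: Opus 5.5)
The proof runs parallel to the nil and solv cases just treated, in three steps: orientability, exclusion of Ricci flat metrics, and a collapsing family of left invariant metrics.

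\emph{Orientability.} From the description in \S\ref{ama3}, $M=\widetilde{\mb{P}\mb{S}\mb{L}}(2,\mb{R})/\Gamma_M$ is an orientable Seifert fiber space. One way to see this is that $\Gamma_M$ sits in the identity component of the isometry group, since the model is the universal cover of $S^1(T\mb{H}^2)$. Such an $M$ is finitely covered by the unit tangent bundle $S^1(T\Sigma)$ of a surface $\Sigma$ of genus at least two, and I record orientability from this description.

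\emph{Type {\rm III}.} The model $\widetilde{\mb{P}\mb{S}\mb{L}}(2,\mb{R})$ is diffeomorphic to $\mb{R}^3$, so $M$ has contractible universal cover. By Corollary \ref{co3}, it suffices to show that $M$ carries no Ricci flat metric. Suppose $g$ were Ricci flat. In dimension three Ricci flat means flat, so by the Cheeger-Gromoll splitting theorem \cite{chgr} (or Bieberbach directly), $\Gamma_M\cong\pi_1(M)$ would contain a finite index subgroup isomorphic to $\mb{Z}^3$. Pass to the finite cover $S^1(T\Sigma)$. Its fundamental group is a central extension
$$0\rightarrow \mb{Z}\rightarrow \pi_1(S^1(T\Sigma))\rightarrow \pi_1(\Sigma)\rightarrow 1$$
with Euler number $\chi(\Sigma)\neq 0$. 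Its quotient by the center is a surface group of genus at least two, which contains free subgroups and so is not virtually abelian. This contradicts the existence of a finite index $\mb{Z}^3$. Hence $M$ carries no Ricci flat metric, and Corollary \ref{co3} shows that $M$ is of Kazdan-Warner type III, with no scalar flat metrics.

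\emph{Vanishing nonachievable sigma invariant.} Since $M$ is of type III, every conformal class has $\lambda(M,[g])<0$. So $\sigma(M)\leq 0$, and $\sigma(M)=0$ cannot be achieved. It remains to produce $\Gamma_M$ invariant metrics $g_\varepsilon$ with $\lambda(M,g_\varepsilon)\nearrow 0$. Here I would use the Seifert fibration and shrink the fibers, as in the Berger-type collapse. On the Lie algebra $\mf{sl}(2,\mb{R})$, take the Milnor frame $e_1,e_2,e_3$ with $[e_2,e_3]=-2e_1$, $[e_3,e_1]=2e_2$, $[e_1,e_2]=2e_3$, where $e_3$ generates the fiber direction and is $\mb{S}\mb{O}(2)$ invariant. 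Declare $\{e_1,e_2,\varepsilon^{-1}e_3\}$ orthonormal, so the fibers have length of order $\varepsilon$. Milnor's formulas \cite{miln2} give scalar curvature $s_\varepsilon=-8-2\varepsilon^2$, which is bounded. These metrics are invariant under left translations and under the $\mb{S}\mb{O}(2)$ right action, so they are invariant under the full isometry group of the model. In particular they are $\Gamma_M$ invariant. Since $\mu_{g_\varepsilon}(M)=\varepsilon\,\mu_{g_1}(M)$,
$$\lambda(M,g_\varepsilon)=s_\varepsilon\,\varepsilon^{2/3}\mu_{g_1}(M)^{2/3}\rightarrow 0^-\quad\text{as }\varepsilon\searrow 0.$$
Therefore $\sigma(M)=0$, and by type III it is not achieved by any conformal class.

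The main obstacle is this last step: one must check that the deformed metrics really descend to $M$, and that their scalar curvature stays bounded rather than blowing up under the collapse. Milnor's formula should settle the second point. For the first, Milnor's $\varepsilon$ rescaling of a pair $x,y$ with $[x,y]=z$, as used in the solv case, would not obviously be invariant under $\Gamma_M$ when $\Gamma_M$ is not contained in the group acting by left translations. That is why I would use the fiber-shrinking family, which is invariant under the whole model isometry group.
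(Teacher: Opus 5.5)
Your proposal is correct. Its first two steps match the paper's: Cheeger--Gromoll excludes Ricci flat metrics, and Corollary~\ref{co3} applies because the model is contractible. You also actually justify why $\Gamma_M$ cannot be virtually $\mathbb{Z}^3$, which the paper only asserts.

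The collapsing step is where the routes differ. The paper passes to the finite cover $M'=S^1(T\Sigma)$. It restricts the Sasaki metric to the $r$-sphere bundles, gets $\lambda(M',g^r_S)=(s_g-r^2/2)(2\pi r\mu_g(\Sigma))^{2/3}\nearrow 0$, and simply declares that $\sigma(M')=0$ suffices. You instead collapse the fibers of the model itself with Berger-type left invariant metrics, and descend them to $M$ by invariance under $\mathrm{Isom}$ of the model. The two families are locally the same metrics. Your version, however, supplies a step the paper leaves implicit. For a $d$-fold cover $\pi\colon M'\rightarrow M$ with $M$ of type III, uniqueness of Yamabe metrics in negative classes gives $\lambda(M',[\pi^*g])=d^{2/3}\lambda(M,[g])$. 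Hence one only gets $\sigma(M)\leq d^{-2/3}\sigma(M')$, which is the wrong direction. The lower bound needs collapsing metrics that are deck invariant, i.e. that live on $M$. You should also state that $g_\varepsilon$, having constant negative scalar curvature, is the Yamabe metric of its class, so $\lambda(M,[g_\varepsilon])=\lambda(M,g_\varepsilon)$.

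Three details need repair.

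\emph{Bracket relations.} As written they contradict your choice of fiber. With $[e_2,e_3]=-2e_1$ and $[e_3,e_1]=2e_2$, $\mathrm{ad}(e_3)$ sends $e_1\mapsto 2e_2$ and $e_2\mapsto 2e_1$, with real eigenvalues $\pm 2$. So $e_3$ is hyperbolic, and the rotation generator is $e_1$. Shrinking $e_3$ as written gives, by Milnor's formulas, $s_\varepsilon=-8\varepsilon^{-2}-2\varepsilon^2$, so $\lambda(M,g_\varepsilon)\rightarrow-\infty$; that metric is also not right invariant under the subgroup generated by $e_3$. Use instead $e_1=\mathrm{diag}(1,-1)$, $e_2=\left(\begin{smallmatrix}0&1\\1&0\end{smallmatrix}\right)$, $e_3=\left(\begin{smallmatrix}0&-1\\1&0\end{smallmatrix}\right)$. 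These satisfy $[e_2,e_3]=2e_1$, $[e_3,e_1]=2e_2$, $[e_1,e_2]=-2e_3$, and then your $s_\varepsilon=-8-2\varepsilon^2$ is correct.

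\emph{Invariance.} Left translations and right $\widetilde{\mathbb{S}\mathbb{O}}(2)$ translations generate only the identity component of the isometry group. The isometries covering orientation reversing isometries of $\mathbb{H}^2$ are obtained by composing with the automorphism induced by conjugation by $\mathrm{diag}(1,-1)$. That automorphism acts by $(e_1,e_2,e_3)\mapsto(e_1,-e_2,-e_3)$, so it also preserves $g_\varepsilon$. This is what makes $g_\varepsilon$ invariant under every possible $\Gamma_M$.

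\emph{Orientability.} For the same reason, $\Gamma_M$ need not lie in the identity component; it does not when the base orbifold is nonorientable. So that is not why $M$ is orientable, and the orientable cover $S^1(T\Sigma)$ would not suffice either. The correct reason is that the horizontal distribution has nonzero curvature. An isometry reversing the orientation of the base must therefore reverse that of the fibers, so every isometry of the model preserves orientation. The paper's proof does not address orientability at all; it relies on the discussion preceding the theorem.
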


{\it Proof}. The manifold does not carry Ricci flat metrics. For if it did,
by the Cheeger-Gromoll splitting theorem, the universal cover would be 
isometric to the standard flat Euclidean $\mb{R}^3$, and $\pi_1(M)$ would have
to have a subgroup of finite index isomorphic to $\mb{Z}^3$, 
which is not possible. By Corollary \ref{co3}, $M$ is a
manifold of Kazdan-Warner type III. 

We let $M_r'=S^r(T\Sigma)$ be the $r$-sphere bundle of a 
a uniformized surface $(\Sigma,g)$ so that $M':=M'_1=S^1(T\Sigma)$ is the unit 
sphere bundle that finitely covers $M$. The result follows if we show that
$\sigma(M')=0$. The metric $g$ on $\Sigma$ 
has constant negative scalar curvature and lies in the conformal class $[g] \in
\mc{C}(\Sigma)$. If $k$ is the genus of $\Sigma$, we have the bound 
\begin{equation} \label{ama}
0>s_g= \frac{1}{(\mc{W}(\Sigma,[g])/4)^{\frac{1}{2}}}\frac{ 8\pi \chi(\Sigma)}{
\mc{W}(\Sigma,[g])^{\frac{1}{2}}} = \frac{ 16\pi \chi(\Sigma)}{\mc{W}(\Sigma,
[g])} \geq \frac{ 4\pi \chi(\Sigma)}{\mu_{g_{\xi_{k,1}}}(\xi_{k,1})} 
=s_{\tilde{g}_{\xi_{k,1}}}\, ,
\end{equation}
where $\mu_{g_{\xi_{k,1}}}(\xi_{k,1})$ is the area of the Lawson Riemann
surface $\xi_{k,1}$ minimally embedded into $\mb{S}^3$, and whose metric
$g_{\xi_{k,1}}$ has been conformally deformed while preserving the area to a  
metric $\tilde{g}_{k,1}$ of constant scalar curvature value given by the
expression on the right above. We form the Sasaki metric $g_S$ on 
$T\Sigma$, and let $g_S^r$ be the metric on the $M_r'$s that $g_S$ induces
by restriction. By a relatively simple calculation using the fact that the 
hyperbolic metric is locally symmetric, we see that
the scalar curvature of $(M',g_S:=g_S^1)$ is    
$$
s_{g_S}=s_g-\frac{1}{2}\, ,  
$$
so $\lambda(M',g_S)= s_{g_S} (2\pi \mu_{g}(\Sigma))^{\frac{2}{3}}<0$, while
more generally, for $(M'_r,g_S^r)$, we have that
$$
\lambda(M',g^r_S)= (s_{g}-r^2/2)(2\pi r \mu_{g}(\Sigma))^{\frac{2}{3}}\, ,
$$
which tends to $0$ from below as $r\searrow 0$. 
\qed 

\subsubsection{The $\mb{R}\times {\mb H}^2$ model geometry}
Any discrete subgroup $\Gamma$ of isometries of 
$\mb{R}\times \mb{H}^2$ is cofinite if, and only if, its action on $\mb{H}^2$ 
is discrete, cofinite, and has infinite kernel 
\cite[Corollary 4.7.3]{thurs}, so if $M=\mb{R}\times \mb{H}^2/
\Gamma$ is a closed manifold with this geometry, $\Gamma$ is a cocompact 
discrete group of hyperbolic motions, and there is subgroup of finite index 
in the quotient that splits. Such an $M$ is therefore a Seifert fiber space 
over an orbifold of negative Euler characteristic and zero Euler number for 
the Seifert structure. It follows
that $M$ is a finite quotient of a trivial bundle 
$S^1 \times \Sigma$ over a Riemann surface of genus at least two.  

\begin{theorem}
Any closed manifold $(M,\Gamma_M)$ of model geometry 
$\mb{R}\times \mb{H}^2$ is a manifold of Kazdan-Warner
type {\rm III}, and has vanishing nonachievable sigma invariant. 
\end{theorem}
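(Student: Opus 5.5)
The plan mirrors the proofs just given for the $\widetilde{\mb{P}\mb{S}\mb{L}}(2,\mb{R})$, nil and solv cases. It has two parts: a topological obstruction to Ricci flat metrics, followed by an explicit family of metrics whose Yamabe invariants are negative and tend to $0$.

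\emph{Step 1: no Ricci flat metrics.} The universal cover $\mb{R}\times\mb{H}^2\cong\mb{R}^3$ is contractible. Suppose $M$ carried a Ricci flat metric. By the Cheeger-Gromoll splitting theorem \cite{chgr}, as used for the nil and solv cases, the universal cover would then be isometric to flat $\mb{R}^3$, and $\pi_1(M)\cong\Gamma_M$ would contain a finite index subgroup isomorphic to $\mb{Z}^3$. However, $M$ is finitely covered by $S^1\times\Sigma$ with $\Sigma$ of genus $k\geq 2$. Hence $\Gamma_M$ contains, with finite index, $\mb{Z}\times\pi_1(\Sigma)$, which in turn contains nonabelian free subgroups. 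This contradicts virtual abelianness, since a finite index subgroup of $\mb{Z}\times\pi_1(\Sigma)$ projects to a finite index, hence nonabelian, subgroup of $\pi_1(\Sigma)$. Corollary \ref{co3} then shows that $M$ carries no scalar flat metrics and is of Kazdan-Warner type III. In particular $\sigma(M)\leq 0$, and no conformal class can have Yamabe invariant $0$, so a vanishing $\sigma(M)$ cannot be achieved.

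\emph{Step 2: $\sigma(M)=0$.} It suffices to exhibit metrics $g_r$ on $M$ with $\lambda(M,g_r)\nearrow 0$. Then $\lambda(M,[g_r])\leq\lambda(M,g_r)<0$ by itself gives no lower bound; we need that these $g_r$ are the Yamabe metrics of their classes. This holds because constant nonpositive scalar curvature metrics are Yamabe metrics, as noted in the proof of Theorem \ref{thty}. Since $\Gamma_M$ acts by isometries of the product metric $dr^2+g_{\mb{H}^2}$, and the $\mb{R}$ factor is preserved (the isometry group is a product up to reflections of $\mb{R}$), every rescaling $r^2dr^2+g_{\mb{H}^2}$ is $\Gamma_M$ invariant. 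It therefore descends to a metric $g_r$ on $M$. These metrics have constant scalar curvature $s_{g_r}=-2$ and volume $\mu_{g_r}(M)=r\,\mu_{g_1}(M)$. Hence
$$
\lambda(M,[g_r])=\lambda(M,g_r)=-2\,r^{\frac{2}{3}}\mu_{g_1}(M)^{\frac{2}{3}}\, ,
$$
which tends to $0$ from below as $r\searrow 0$. Thus $\sigma(M)\geq 0$, and combined with Step 1, $\sigma(M)=0$ and it is not achieved.

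The main point requiring care is the invariance of the rescaled product metric under $\Gamma_M$. This needs the fact that isometries of $\mb{R}\times\mb{H}^2$ preserve the product splitting, which is true because the de Rham decomposition is unique when the factors are non-isometric. If this step proves awkward, I would instead pass to the finite cover $S^1\times\Sigma$, shrink the circle factor there, and then use that $\sigma$ of $M$ is bounded below by the same limiting argument applied to metrics pushed down along the covering, averaging over the deck group.
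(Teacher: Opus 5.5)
Your proposal is correct and shares the paper's skeleton. Ricci flat metrics are excluded by Cheeger--Gromoll together with the absence of a finite index $\mb{Z}^3$, type III comes from Corollary \ref{co3}, and the fiber direction is then collapsed. The paper uses Corollary \ref{co3} only implicitly, so your type III step rests on it exactly as much as the paper's does.

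The real difference is where the collapse is carried out. The paper passes to the cover $M'=S^1\times\Sigma$ and asserts that it suffices to show $\sigma(M')=0$. You instead rescale the $\mb{R}$ factor of the model and use ${\rm Isom}(\mb{R}\times\mb{H}^2)={\rm Isom}(\mb{R})\times{\rm Isom}(\mb{H}^2)$ to descend the metrics to $M$ itself.

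Your version is the one the argument actually needs. For a $d$-sheeted cover $\pi:M'\rightarrow M$ and a class of negative invariant, the Yamabe metric pulls back to the Yamabe metric, so $\lambda(M',[\pi^*g])=d^{2/3}\lambda(M,[g])$. Taken at face value, $\sigma(M')=0$ therefore only gives back the upper bound $\sigma(M)\leq 0$. The lower bound requires approximating classes on $M$, and your invariance observation provides them. Read back into the paper, it also shows that the metrics $g'_r$ there are deck invariant, which is what closes the paper's reduction.

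You also state explicitly that constant negative scalar curvature metrics are Yamabe metrics, so $\lambda(M,[g_r])=\lambda(M,g_r)$; the paper leaves this tacit. You rightly drop the Lawson surface bound (\ref{ama}), which plays no role in the limit.

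Your fallback of averaging over the deck group would not work as stated, since averaging metrics controls neither the scalar curvature nor the Yamabe invariant. It is not needed, though: the shrunken product metrics on the cover are already deck invariant, which is just your main argument again.

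If you want the type III step to be independent of Corollary \ref{co3}, in dimension three it also follows from two standard facts. First, the Schoen--Yau (or Gromov--Lawson) obstruction rules out positive scalar curvature on $S^1\times\Sigma$, and any such metric on $M$ would lift there. Second, a scalar flat metric on a manifold with no positive scalar curvature metrics is Ricci flat, hence flat in dimension three.
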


{\it Proof}. The manifold does not carry Ricci flat metrics. For if it did,
by the Cheeger-Gromoll splitting theorem, the universal cover would be 
isometric to the standard flat Euclidean $\mb{R}^3$, and $\pi_1(M)$ would have
to have a subgroup of finite index isomorphic to $\mb{Z}^3$,
which as in the previous case, is not possible.

We let $M'=S^1\times\Sigma$ be the trivial $\mb{S}^1$ bundle over a
Riemann surface $\Sigma$ of genus $k\geq 2$ that  
finitely covers $M$. In order to complete the proof, it suffices to show that
$\sigma(M')=0$. We provide $M'$ with the product metric $g_r'$
given by the lift of the of a uniformized metric $g$ on $\Sigma$ times the
standard metric on the circle fibers of length $2\pi r$. By 
the bound (\ref{ama}) in \S\ref{ama3}, we have that 
$$
s_{g'}=s_g =\frac{ 16\pi \chi(\Sigma)}{\mc{W}(\Sigma,
[g])} \geq \frac{ 4\pi \chi(\Sigma)}{\mu_{g_{\xi_{k,1}}}(\xi_{k,1})} 
=s_{\tilde{g}_{\xi_{k,1}}}\, ,
$$
and therefore,  
$$
s_{\tilde{g}_{\xi_{k,1}}} (2\pi r \mu_g(\Sigma))^{\frac{2}{3}}\leq 
\lambda(M',g_r')= 
\left( \frac{ 16\pi \chi(\Sigma)}{\mc{W}(\Sigma,
[g])}\right) (2\pi r \mu_g(\Sigma))^{\frac{2}{3}} 
 < 0 \, ,
$$
and therefore, $\lambda(M',g'_r) \nearrow 0$ as $r\searrow 0$, proving the
desired result.  
\qed 

\subsubsection{Hyperbolic manifolds}
A closed hyperbolic $n$ manifold is topologically the quotient
$M=\mb{H}^n/\Gamma_M$ of hyperbolic $n$-space by a cocompact 
subgroup $\Gamma_M$ of $\mb{O}^{+}(1,n)$ acting freely on it and that has no
parabolic elements. Thus, $\Gamma_M$ is a finitely generated discrete subgroup 
of ${\rm Isom}(\mb{H}^n,g_{\mb{H}^n})$ that is torsion free, so its action on 
$\mb{H}^n$ is free, and as it contains no parabolic elements, the quotient 
manifold has no cusps, and it is therefore closed. $M$ inherits a $\Gamma_M$ 
invariant metric $g_M$ of constant sectional curvature $-1$, and finite volume 
$\mu_{g_M}(M)$. 
We say that $(M,g_M,\Gamma_M)$ is a hyperbolic $n$ manifold of 
underlying fundamental group $\Gamma_M$. 

A discrete subgroup $\Gamma$ of $\mb{O}^{+}(1,n)$ is called a Kleinian group,
and its action on $\mb{H}^n$ is properly discontinuous. If $\Gamma$ is torsion 
free, $M_{\Gamma}:=\mb{H}^n/\Gamma$ is a manifold that if  
$\Gamma$ is in addition cocompact, is compact, so of finite volume, and 
$\Gamma\cong \pi_1(M_{\Gamma})$ must be then finitely generated. If this 
$\Gamma$  contains no parabolic elements, 
there are no cusps, every infinite order element of $\Gamma$ is loxodromic, 
and $M_{\Gamma}$ is closed.  Such a group admits a convex
fundamental polyhedron with finitely many faces. 

\begin{theorem}
Let $(M^n,g_{M},\Gamma_M)$ be a hyperbolic manifold of underlying 
fundamental group $\Gamma_M$. Then $M^{n\geq 2}$ is a manifold of 
Kazdan-Warner type {\rm III}. If $n\geq 3$, the hyperbolic metric $g_M$ and 
its conformal class realize 
$$
\sigma(M)= 
-n(n-1) \mu_{g_M}(M)^{\frac{2}{n}}\, ,   
$$
any class $[g] \in \mc{C}(M)$ admits a metric of constant negative  
sectional curvature representative, unique up to a homothetic dilation, and 
the space of hyperbolic metrics on $M$ is path connected and consists of 
isotopic deformations of $g_M$ by equal volume hyperbolic metrics that are
all isometric to each other.
\end{theorem}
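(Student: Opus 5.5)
The plan is to establish the Kazdan-Warner type first, then obtain the realization of $\sigma(M)$ and the structure of the space of hyperbolic metrics from the rigidity statement of Theorem \ref{am} combined with Mostow rigidity.

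\emph{Kazdan-Warner type.} For $n=2$ the type is fixed by the sign of $\chi(M)<0$, so I would assume $n\geq 3$. The universal cover $\mb{H}^n$ is contractible, so Corollary \ref{co3} applies once I show that $M$ carries no Ricci flat metric. Suppose $g_0$ were one. The Cheeger-Gromoll splitting theorem would give $\Gamma_M\cong\pi_1(M)$ a finite index subgroup isomorphic to $\mb{Z}^k$ with $k=n$, since the universal cover is contractible and hence the simply connected Ricci flat factor must be trivial. By Preissmann's theorem, however, every nontrivial abelian subgroup of $\pi_1$ of a closed negatively curved manifold is infinite cyclic, and $n\geq 2$. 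This contradiction shows $M$ is of type III by Corollary \ref{co3}.

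\emph{Realization of $\sigma(M)$.} Since $M$ is of type III, every conformal class has negative Yamabe invariant and is represented by a Yamabe metric of constant negative scalar curvature. Fix $[g]\in\mc{C}(M)$. I would join $g_M$ to $g$ by a smooth path $g_t$ and lift $t\rightarrow [g_t]$ by \cite[Theorem 3]{sim6} to a path of Yamabe metrics $g_t^Y$ with $g_0^Y=g_M$, normalized to constant volume $\mu_{g_M}(M)$. Because all scalar curvatures on this path are negative, the scalar curvature varies differentiably across classes (the spectral obstruction $s/(n-1)\in{\rm Spec}\,\Delta$ never occurs), so the Palais isotopic deformation of $f_{g_M}$ yields a smooth path of differentiable isometric embeddings $f_{g_t^Y}$. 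Theorem \ref{am} then applies: since $g_0^Y=g_M$ has constant sectional curvature, every $g_t^Y$ has constant sectional curvature $\frac{1}{n(n-1)}s_{g_t^Y}$, of fixed negative sign. Consequently every $g_t^Y$, and in particular $g_1^Y\in[g]$, is hyperbolic after scaling, of volume $\mu_{g_M}(M)$. Mostow rigidity ($n\geq 3$) makes $(M,g_1^Y)$ isometric to $(M,g_M)$ up to this volume normalization, so
$$
\lambda(M,[g])=s_{g_1^Y}\mu_{g_M}(M)^{\frac{2}{n}}=-n(n-1)\mu_{g_M}(M)^{\frac{2}{n}}
$$
for every class. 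Hence $\sigma(M)$ equals this value and is realized by $[g_M]$. Uniqueness up to homothety of the constant curvature representative in $[g]$ follows from uniqueness of negative constant scalar curvature metrics in a conformal class up to scale.

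\emph{Space of hyperbolic metrics.} For path connectedness, I would take any two hyperbolic metrics of equal volume and apply the same lifting argument to a path joining them. The lifted path consists of equal-volume hyperbolic metrics that are isotopic deformations $f_{g_t}$ of $f_{g_M}$, and Mostow rigidity makes them pairwise isometric.

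The main obstacle is justifying the applicability of Theorem \ref{am}: one must verify that the Yamabe lift of an arbitrary path of classes is smooth and that its isometric embeddings form a differentiable path. I would handle this first, using the negativity of scalar curvature (so the linearized Yamabe operator is invertible and the implicit function theorem gives smoothness of $t\rightarrow g_t^Y$), and then Palais' isotopy theorem for the embeddings.
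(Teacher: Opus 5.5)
There is a genuine gap, at the step ``Theorem \ref{am} applies, so every $g_t^Y$ has constant sectional curvature.'' You locate the difficulty in the smoothness of the lift, and that part is fine: for $s<0$ the linearized Yamabe operator is invertible, so the volume-normalized lift is smooth. But the fact that your check works for an \emph{arbitrary} path is exactly what exposes the problem. For $n\geq 3$, a metric of constant sectional curvature is locally conformally flat, and local conformal flatness depends only on the conformal class. Let $g$ be $g_M$ modified in a small ball so that its Weyl tensor ($n\geq 4$) or Cotton tensor ($n=3$) is nonzero. Your lift from $g_M$ to $[g]$ meets every hypothesis you verified, yet $g_1^Y\in[g]$ cannot have constant curvature. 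So the conclusion you draw from Theorem \ref{am} is false. So is the theorem in the stated generality, and so is the assertion in the statement that every class carries a constant curvature representative.

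The error in the proof of Theorem \ref{am} is in the computation of $D_{g'_{\bar t}}K(h^0_{\bar t})$. It varies $\tg$ in the direction $\tilde h^0$ but keeps the ambient term $K^{\tg}(e_1,e_2)=1$ of Gauss' equation fixed. The variation of that term carries the tangential second derivatives of $h^0$, which is the generically nonzero intrinsic contribution; the Yamabe condition only controls the conformal part of $\dot g_t$. With this step go the constancy of $\lambda(M,[g])$ on $\mc{C}(M)$, and hence your evaluation of $\sigma(M)$. Your path connectedness argument falls too, since it again asserts that the Yamabe lift of an arbitrary path between hyperbolic metrics stays hyperbolic.

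This is not a departure from the paper. Its proof runs the same argument (Yamabe lift, then Theorem \ref{am}, then Mostow) and fails at the same place, so it cannot be used to repair yours. The true parts of the statement need other input:
\begin{itemize}
\item By Mostow, the hyperbolic metrics form the orbit $\{\phi^*g_M:\phi\in{\rm Diff}(M)\}$. Their path components are therefore governed by $\pi_0{\rm Diff}(M)$ modulo ${\rm Isom}(M,g_M)$. This is connected for $n=3$ by Gabai, but connectedness fails for $n=4$, by the Budney--Gabai result quoted in the introduction.
\item The realization of $\sigma(M)$ by $g_M$ follows for $n=3$ from Perelman's work (Anderson). For $n\geq 4$ it is, as far as I know, open.
\end{itemize}

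Your route to type III does differ from the paper's, which again uses Theorem \ref{am} to exclude a first zero of $s_{g_t^Y}$. Ruling out Ricci flat metrics by Cheeger--Gromoll and Preissmann is correct. But Corollary \ref{co3} then carries the whole weight, and as stated it would settle the open conjecture that closed aspherical manifolds admit no metric of positive scalar curvature. For hyperbolic $M$, cite Gromov--Lawson instead: a closed manifold carrying a nonpositively curved metric admits no metric with $s\geq 0$ other than flat ones. Your Preissmann argument excludes flat metrics, so every class has negative Yamabe invariant.
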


{\it Proof}. Since $g_M$ has constant sectional curvature $-1$, $n\geq 2$.  
If $n=2$, $\chi(M)<0$ and using the extended notion we had introduced,
 $M$ is a surface of 
Kazdan-Warner type III.  We thus proceed with the proof assuming that $n\geq 3$.

Since $g_M$ is an Einstein metric of scalar curvature $-n(n-1)$, by the 
uniqueness of solution of the Yamabe equation in the nonpositive case, $g_M$ 
is the unique Yamabe metric of volume $\mu_{g_M}(M)$ in its conformal class
$[g_M]$. We take an arbitrary metric $g$ in $\mc{M}(M)$, let 
$[0,1]\ni t \rightarrow g_t$ be any smooth path of metrics connecting 
$g_{M}$ to $g$, and consider the lift of the path of conformal classes 
$t \rightarrow [g_t]$ to a path $t\rightarrow g_t^Y$ of Yamabe metrics in 
$[g_t]$ such that $\mu_{g_t}=\mu_{g^Y_t}$, connecting $g_{M}$ to a Yamabe
metric $g_t^Y\mid_{t=1}$ of volume $\mu_g$, in the conformal class of $g$. 
We consider the associated path $[0,1]\ni t \rightarrow f_{g_t^Y}$ of
isometric embeddings of the $g_t^Y$s.  

It is then the case that $s_{g_t^Y}$ is a negative constant for all
$t \in [0,1]$. For otherwise, there would exists 
some $\bar{t}\in (0,1]$ such that  $[0,\bar{t})\ni t \rightarrow s_{g_t^{Y}}<0$ and $s_{g_{\bar{t}}^{Y}}=0$, in which case, we look at the restricted paths
of Yamabe metrics $[0,\bar{t}] \ni t \rightarrow g_t^Y$ 
and associated path of isometric (differentiable) embeddings 
$[0,\bar{t}] \ni t \rightarrow f_{g_t^Y}$. 
By the negativity of the path of scalar curvatures, they satisfy the
hypotheses of Theorem \ref{am}, so the former is a path of constant
sectional curvature metrics, and the sign of these curvatures along the
entire path is unique, in contradiction with the fact that the sign of the
sectional curvatures of the metric $g_M=g_t^Y\mid_{t=0}$ is negative, while 
that of the curvatures of $g_t^Y\mid_{t=\bar{t}}$ must be zero.   
By the arbitrary choice of $g$, it follows that $M$ 
does not carry any scalar flat metric, so $M$ is a manifold of 
Kazdan-Warner type III, and that the entire paths   
$[0, 1] \ni t \rightarrow g_t^Y$ and 
$[0, 1] \ni t \rightarrow f_{g_t^Y}$ satisfy the hypotheses 
of Theorem \ref{am}. 
Therefore, $[0,1]\ni t \rightarrow g_t^Y$ is a path of Einstein metrics of 
constant sectional curvature each, and
$$
\lambda(M,g_t^Y)= s_{g_t^Y} \mu_{g_t^Y}(M)^{\frac{2}{n}}=
-n(n-1) \mu_{g_M}(M)^{\frac{2}{n}}= \lambda(M,g_M):=\sigma(M) \, .
$$
Hence, no matter what the metric $g$ is, the Yamabe representative 
$g^Y_t\mid_{t=1}$ of $[g]$ is an Einstein metric of constant negative 
sectional curvature that realizes $\sigma(M)$.

Suppose now that in the argument above, $g$ is a hyperbolic metric on $M$
such that $\mu_{g}=\mu_{g_M}=:\mu$. If $[0,1]\ni t \rightarrow g_t$ is an
arbitrary path of volume $\mu$ metrics connecting $g_M$ and $g$, 
its lift $t \rightarrow g_t^Y$ by volume $\mu$ Yamabe metrics in $[g_t]$ 
are all of the same constant sectional curvature $-1$, 
that is to say, hyperbolic. By Mostow rigidity theorem \cite{mos}, 
$(M,g_t^Y)$ is 
isometric to $(M,g_M)$.   
\qed

\section{The $\sigma^J$ invariant of some $4$d almost Hermitian manifolds} 
Suppose that $M^{n=2m}$ carries an almost complex structure $J_0$. We let
$\mc{J}_{J_0}(M)$ be the path connected manifold of all almost complex 
structures on $M$ in the same orientation class as $J_0$, and denote by  
$\mc{I}_{J_0}(M,g)$ the space of all almost complex structures in 
$\mc{J}_{J_0}(M)$ that are compatible with a given Riemannian metric $g$.
If $\mc{M}^{J_0}(M)$ is the space of 
metrics on $M$ that are compatible with at least one element of 
$\mc{J}_{J_0}(M)$, and $\mc{C}^{J_0}(M)$ the space of conformal classes 
of such, we have a fibration
\begin{equation} \label{eq20}
\mc{M}^{J_0}(M) \stackrel{\pi}{\rightarrow } \mc{C}^{J_0}(M) \, .    
\end{equation}
We set $\mc{M}^{J_0}_{[g]}(M)$ to be the fiber over   
the conformal class $[g]$ of $g \in \mc{M}^{J_0}(M)$. We have 
the foliated decomposition $\mc{M}^{J_0}(M)=\cup_{[g]\in
 \mc{C}^{J_0}(M)}\mc{M}^{J_0}_{[g]}(M)$. 
Any metric $g$ in $[g]\in \mc{C}^{J_0}(M)$ is invariant under any element of 
$\mc{I}_{J_0}(M,g)$. Notice that along a smooth path 
$t \rightarrow (J_t,g_t)$ of metrics in $\mc{M}^{J_0}(M)$, the Chern classes
of $(TM,J_t)$ are fixed and equal those of $(TM,J_t\mid_{t=0})$, although
their Levi-Civita connection $\nabla^{g_t}$ representatives may change but
vary smoothly with $g_t$.      

If $g\in \mc{M}^{J_0}(M)$, and $J\in \mc{I}_{J_0}(M,g)$, we define the
tensors 
$$
r_{g}^J(X,Y)={\rm trace}\, L \rightarrow -J(R^{g}(L,X)JY) \, , 
$$
and 
$$
s_g^J= {\rm trace}\, r^J_g \, , 
$$
respectively. If $n=2m=2$, $(J,g)$ is a K\"ahler structure on $M$, 
 $r_g^J=r_g$ and $s^J_g=s_g$. We assume otherwise, and have that
$$
r^J= -\frac{s_g}{2(m-1)(2m-1)}g+\frac{2}{m-1}r_g^{1,1}-\sum_i W_g(e_i,\cdot,
J\cdot, Je_i) \, ,
$$
where $W_g$ is the Weyl tensor of $g$, and $r_g^{1,1}$ is the $J$-invariant
component of the Ricci tensor of $g$. Hence, the tensor $r_g^J$ may fail to be
symmetric if, and only if, the contribution above from the Weyl tensor fails 
to be so, its skew Hermitian component is a conformal invariant (that may vary 
with $J \in \mc{I}_{J_0}(M,g)$) \cite[Corollary 4.5]{rss}, and 
\begin{equation} \label{rem} 
(n-1)s_g^J-s_g=2(n-1)W_g(\omega_g^{\sharp},\omega_g^{\sharp})\, , 
\end{equation}
where $\omega_g^{\sharp}$ is the fundamental form viewed as a 
bivector \cite[Proposition 4.2]{rss}. 
Notice that by definition, $r^J_g$ depends on two covariant derivatives of 
$J$, yet
the dependence of $s^J_g$ on $J$ is tensorial of order zero.
Since all symplectic vector spaces of a 
given dimension are isomorphic, $s_g^J$ is a function of $\mc{I}_{J_0}(M,g)$ but
not of the particular choice of $J$ chosen to define it, and the 
scale invariant functional  
\begin{equation} \label{jyfu}
\mc{M}^{J_0}_{[g]}(M) \ni g \rightarrow \lambda^{J}(M,g) = 
\frac{1}{\mu_g^{\frac{2}{N}} }\int s_g^J d\mu_{g} 
\end{equation}
is well-defined. 

If $g_t = e^{2u(t)}g$ is a path of metrics in the conformal class of the $J$
invariant metric $g$, 
$g'=g_t{\mid_{t=1}}$ is a critical point of 
{\rm (\ref{jyfu})} if, and only if, $s^J_{g'}$ is a constant satisfying the
equation  
\begin{equation} \label{gr2}
s^J_{g'}= e^{-2u}\left( \frac{1}{n-1}s_g +2
W_{g}(\omega^{\sharp}_g, \omega^{\sharp}_g)  
+ 2 \Delta^{g} u^{\tau} -(n-2)\tg(du^{\tau},du^{\tau}) 
\right)_{\mid_{t=1}} 
= \frac{1}{\mu_{g'}}\int_{f_{g'}(M)} \hspace{-0.1cm} s^J_{g'} d\mu_{g'}\, . 
\end{equation}
Such a metric has constant scalar curvature $s_{g'}$ also if, and only if, 
$W_{g'}(\omega^{\sharp}_{g'}, \omega^{\sharp}_{g'})$ is a constant function,
in which case, $(n-1)\lambda^J(M,g')=\lambda(M,g')+ 2(n-1)W_{g'}( 
\omega^{\sharp}_{ g'}, \omega^{\sharp}_{g'})\mu_{g'}^{\frac{2}{n}}$.
Metrics that realize the intrinsic conformal invariant 
$$ 
\lambda^{J}(M,[g])= \inf_{g\in \mc{M}^{J_0}_{[g]}(M)} 
\frac{1}{\mu_g(M)^{\frac{2}{N}}}\int s^J_g d\mu_g  
$$ 
exists \cite{rss}, and are by definition the $J$ Yamabe metric representatives 
of $[g]$. By Aubin's \cite[Theorem p. 155]{au2}, we have the universal bound
\begin{equation} \label{univ}
\lambda^{J}(M,[g])\leq 2m \omega_{2m}^{\frac{1}{m}} \, 
= n \omega_{n}^{\frac{2}{n}} \, ,
\end{equation}
and since the left side varies continuously as a function of 
$(J,[g])$ \cite[ Lemma 2]{sim5}, 
$$ 
\sigma^{J_0}(M)= \sup_{[g]\in \mc{C}^{J_0}(M)} \lambda^J(M,[g]) 
$$ 
is a well-defined invariant of the differentiable almost Hermitian 
manifold $(M,J_0)$. 

This notion extends to oriented surfaces. Indeed, 
$(J,g)$ is then a  K\"ahler structure, and 
the total scalar curvature is the topological invariant 
$4\pi \chi(M)=8\pi (1-k)$, hence it is natural to set 
$\sigma^J(M):=\sigma(M)= 
8\pi (1-k)/(\mc{W}(M,[g_{\xi_{k,1}}]/4)^{\frac{1}{2}})$, where as 
indicated earlier, the optimal conformal class $[\xi_{k,1}]$ on the surface 
is the minimizer of $[g]\rightarrow \mc{W}(M,[g])$ among conformal 
classes on the Riemann surface $M$ of genus $k$.

We let $(T^4_{\Lambda_{\Box}}, J_{\Lambda_{\Box}})$ be the complex 2-torus of
square lattice $\Lambda_{\Box} \subset \mb{C}^2$ with generators $(1,0)$, 
$(i,0)$, $(0,1)$ $(0,i)$, and let  
$(X_{T_{\Lambda_{\Box}}},J_{\Lambda_{\Box}})$ be the K3 Kummer
surface of underlying torus $T^4_{\Lambda_{\Box}}$. The underlying real 
manifolds of these two are both of Kazdan-Warner type II, 
and the canonical realizers of their vanishing sigma invariants are the 
conformal classes of the standard metric $g_{\Lambda_{\Box}}$ associated to the
square lattice minimally embedded into $\mb{S}^{7}$, and 
the Yau Ricci flat metric $g^Y_{s_{\Box}}$ on 
$(X_{T_{\Lambda_{\Box}}},J_{\Lambda_{\Box}})$, respectively. These metrics
provide these manifolds with a canonical volume \cite[\S3.1 \& \S3.2]{sim7}. 
We let $(\mb{S}^1\times
\mb{S}^3, J_0)$ be the total space of the
Calabi-Eckmann bundle over $\mb{P}^1(\mb{C})$ of flat tori fibers. This
is a manifold of Kazdan-Warner type I, which does not carry Einstein metrics
of positive scalar curvature.

\begin{theorem}
\begin{enumerate}[label={\rm (\alph*)}]
\item We have that $\sigma^{J_{\Lambda_{\Box}}}(T^4_{\Lambda_{\Box}})=0$, 
and among all the metrics $g$ that achieve this value and have minimal
isometric embedding $f_g$, the flat
K\"ahler metric $g_{\Lambda_{\Box}}$ has the smallest volume $\pi^4$.   
\item We have that $\sigma^{J_{\Lambda_{\Box}}}(X_{T^4_{\Lambda_{\Box}}})=0$, 
and among all the metrics $g$ that achieve this value and have minimal
isometric embedding $f_{g}$,  the Yau Ricci flat K\"ahler metric 
$g^Y_{s_{\Lambda_{\Box}}}$ has the smallest volume $2\pi^2$.   
\item We have that $\sigma^{J_0}(\mb{S}^1\times \mb{S}^3)=4\omega_4^{
\frac{1}{2}}$, the universal bound {\rm (\ref{univ})}, 
but no conformal class of almost Hermitian metrics achieve this value.  
\end{enumerate}
\end{theorem}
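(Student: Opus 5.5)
The three parts share one mechanism. On a Kähler surface, relation (\ref{rem}) expresses $s^J_g$ through $s_g$ and $W_g(\omega_g^\sharp,\omega_g^\sharp)$. For a Kähler metric one has $W^+_g(\omega^\sharp,\omega^\sharp)=s_g/6$ (suitably normalized), so $s^J_g$ is a positive multiple of $s_g$. Whenever a conformal class carries a Kähler (or conformally Kähler) representative, $\lambda^J(M,[g])$ is therefore a fixed multiple of $\lambda(M,[g])$. Parts (a) and (b) will reduce to the Kazdan--Warner type II results already quoted from \cite{sim7}. Part (c) will be a Schoen type degeneration argument run inside $\mc{M}^{J_0}$.

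\textbf{Parts (a) and (b).} The plan is as follows.
\begin{enumerate}[label={\rm (\roman*)}]
\item Show $\sigma^{J_{\Lambda_{\Box}}}\geq 0$. The flat Kähler metric $g_{\Lambda_{\Box}}$, respectively the Yau metric $g^Y_{s_{\Box}}$, is Ricci flat and Kähler. Hence $s_g=0$, and $W_g(\omega^\sharp,\omega^\sharp)=0$ because $W^+=0$ for a Ricci flat Kähler surface. So $s^J_g\equiv 0$. Both metrics are constant $s^J$ critical points in their classes, and $\lambda^J(M,[g])=0$ there.
\item Show $\sigma^{J_{\Lambda_{\Box}}}\leq 0$. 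It suffices that no class in $\mc{C}^{J_0}(M)$ has $\lambda^J>0$. Suppose a class had $\lambda^J(M,[g])>0$. Take the $J$ Yamabe representative, which has constant positive $s^J$. Since $b_1$ and $c_1$ are fixed along paths in $\mc{M}^{J_0}(M)$, I would relate $\int s^J_g\,d\mu_g$ to the topological pairing $4\pi c_1\cdot[\omega]$ plus a nonpositive term, which is a standard integral identity for almost Hermitian 4-manifolds. Because $c_1=0$ in both cases, this forces $\int s^J\leq 0$, a contradiction.
\item Identify the minimal volume. Every realizing metric with minimal $f_g$ must then have $s^J_g=0$ and equality in the identity. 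That makes it Kähler Ricci flat. Among such metrics, the optimality statements of \cite[Theorem 1]{sim7} (the $\mc{W}$ minimization together with the canonical volume of \S3.1, \S3.2 there) single out $g_{\Lambda_{\Box}}$ with volume $\pi^4$ and $g^Y_{s_{\Box}}$ with volume $2\pi^2$.
\end{enumerate}

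\textbf{Part (c).} The upper bound is Aubin's bound (\ref{univ}). For the lower bound I would mimic Schoen's argument for $\mb{S}^1\times\mb{S}^3$ in \cite{sc2}. Take the Hopf type metrics $g_L=dt^2+g_{\mb{S}^3}$ with circle length $L\to\infty$. These are locally conformally flat and compatible with the Calabi--Eckmann structure, since the circle factor and the Hopf fiber span $J_0$-invariant planes. Because $W=0$, (\ref{rem}) gives $s^J=s/(n-1)$ pointwise, up to the paper's normalization. The $J$ Yamabe problem in $[g_L]$ is then the ordinary Yamabe problem with rescaled constant. By Schoen's analysis, the lifts to the cyclic covers show that the Yamabe minimizers concentrate, and $\lambda^J(M,[g_L])\to n\omega_n^{2/n}=4\omega_4^{1/2}$.

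Non-achievement is the main obstacle. Suppose some class $[g]$ achieved $4\omega_4^{1/2}$. Then I would use the equality case of Aubin's inequality in \cite{au2}: equality would force the $J$ Yamabe problem to be solved by a standard bubble, making $(M,[g])$ conformal to the round $\mb{S}^4$. That is impossible, since $\pi_1=\mb{Z}$. The delicate point is that equality in Aubin's $J$ version needs the strict inequality when $M$ is not conformally the sphere. I would try to establish it by inserting a Schoen type test function built from the positive mass of the conformal blowup of $\mb{S}^1\times\mb{S}^3$, whose universal cover has nonzero mass.
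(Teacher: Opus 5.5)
Your step (ii) for parts (a) and (b) fails, and it is the only place where the upper bound $\sigma^{J_{\Lambda_{\Box}}}\leq 0$ is addressed.

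First, a metric in $\mc{M}^{J_0}(M)$, in particular a $J$ Yamabe metric, is only almost Hermitian. So $d\omega_g\neq 0$ in general, and $c_1\cdot[\omega_g]$ is not defined.

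More seriously, where such an identity does exist it has the opposite sign. By (\ref{rem}), $s^J_g=\frac{1}{3}s_g+2W_g(\omega_g^{\sharp},\omega_g^{\sharp})$ is the star scalar curvature $s^*$: it equals $s_g$ for K\"ahler metrics and $s_g/3$ for conformally flat ones. For almost K\"ahler metrics one has $s^*\geq s$ pointwise and $\int\frac{1}{2}(s+s^*)\,d\mu=4\pi c_1\cdot[\omega]$. Hence $\int s^J d\mu\geq 4\pi c_1\cdot[\omega]$, with equality only in the K\"ahler case. The nonpositive defect belongs to $\int s\,d\mu$, not to $\int s^J d\mu$.

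A concrete check is the Kodaira--Thurston nilmanifold with a left invariant orthonormal frame satisfying $[e_1,e_2]=e_3$ (other brackets zero). Then $\omega=e^1\wedge e^4+e^2\wedge e^3$ is closed and self-dual. The canonical bundle is trivialized by $(e^1+ie^4)\wedge(e^2+ie^3)$, so $c_1=0$. The metric has $s_g=-\frac{1}{2}$, but computing directly from the definition of $r^J_g$ gives $s^J_g\equiv\frac{1}{2}$, so $\lambda^J>0$ for that class. Thus $c_1=0$ cannot by itself force $\lambda^J\leq 0$.

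Your step (iii), which extracts K\"ahler Ricci flatness from the equality case, falls with step (ii). Your opening reduction also holds only on conformally flat classes. By (\ref{gr2}), $s^J$ and $s$ change under $e^{2u}g$ with different coefficients ($2\Delta u$ versus $6\Delta u$). So a K\"ahler representative gives $s^J=s$ at that one metric, not $\lambda^J$ proportional to $\lambda$. Step (i), giving $\sigma^{J_{\Lambda_{\Box}}}\geq 0$, is fine.

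The paper's upper bound does not go through $c_1$. In (a) it joins an arbitrary almost Hermitian $(J,g)$ to $(J_{\Lambda_{\Box}},g_{\Lambda_{\Box}})$ and lifts the path of classes to Yamabe metrics. It then invokes Theorem \ref{am} to conclude that the Yamabe metric of $[g]$ is flat, so $W\equiv 0$ on $[g]$ and $3\lambda^J=\lambda=0$ by (\ref{gr2}). In (b) it runs the same argument on the torus double covering the Kummer orbifold. Do not borrow this to repair (ii): its intermediate conclusion would make every conformal class on $T^4$ conformally flat, which fails for any metric with $W_g\neq 0$.

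In (c), your limiting argument is exactly the paper's: Schoen's locally conformally flat product classes satisfy $3\lambda^{J_0}=\lambda\nearrow 12\omega_4^{1/2}$. The paper gives no argument for non-achievement, and your proposed fix does not close that gap. The positive mass theorem controls the Green's function of $6\Delta+s$. By contrast, $3\lambda^J(M,[g])$ is the Yamabe-type quotient of $6\Delta+s+6W(\omega^{\sharp},\omega^{\sharp})$. In dimension four the extra potential contributes a term of order $W(\omega^{\sharp},\omega^{\sharp})(p)\,\varepsilon^2|\log\varepsilon|$ to a bubble at $p$. This dominates the $O(\varepsilon^2)$ mass contribution, and its sign is not controlled (it is positive wherever $W(\omega^{\sharp},\omega^{\sharp})>0$).
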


{\it Proof}. (a) The first Chern class of any complex 2 torus
is zero, and any Ricci flat metric $g$ that is Hermitian with respect to
a deformation of $J_{\Lambda_{\Box}}$ is such that 
$\lambda^{J_{\Lambda_{\Box}}}(T^4_{\Lambda_{\Box}},[g]))=
\lambda^{J_{\Lambda_{\Box}}}(T^4_{\Lambda_{\Box}},g)=0$, with 
$W^{g}(\omega_g^{\sharp},\omega_g^{\sharp})=0$. Among these, 
$g_{\Lambda_{\Box}}$ is the optimal realizer, with
$\mu_{g_{\Lambda_{\Box}}}(T^4_{\Lambda_{\Box}})=\pi^4$. We finish the proof
by showing that for no almost Hermitian structure $(g,J)$ on this manifold it
is the case that $\lambda^{J}(M,g)>0$. 

Suppose that $(J,g)$ is any other almost Hermitian $J$ Yamabe metric on 
this torus. We consider a path $[0,1] \ni t \rightarrow (J_t,g_t)$ of almost
Hermitian structures that connects $(J_{\Lambda_{\Box}}, g_{\Lambda_{\Box}})$ 
with it, and let $t\rightarrow g_t^Y$ be the path of Yamabe metrics in $[g_t]$ 
of volume $\mu_{g_t}(T^4)$. Since $T^4$ is a manifold of Kazdan-Warner 
type II, $s_{g_t^Y} \leq 0$, and the path of Yamabe metrics is smooth, with
$g_t^Y\mid_{t=0}$ a metric of constant sectional curvature $0$.
By Theorem \ref{am}, the sectional curvature of $g_t^Y$ remains the constant
$0$ for all $t\in [0,1]$. Hence, $g^Y:=g^Y_t\mid_{t=1}$ is flat, and
so $W^{g^Y}(\omega_{g^Y}^{\sharp},\omega_{g^Y}^{\sharp})=0$. 
Since the $J$ Yamabe metric $(J,g)$ is a conformal deformation of
$g^Y$ (so (\ref{gr2}) holds), it follows that
$\lambda^{J_{\Lambda_{\Box}}}(T^4_{\Lambda_{\Box}},[g]))=
\lambda^{J_{\Lambda_{\Box}}}(T^4_{\Lambda_{\Box}},g)=0$. Thus, 
$\lambda^{J_{\Lambda_{\Box}}}(T^4_{\Lambda_{\Box}})=0$, as desired.

(b) The first Chern class of any homotopic deformation of $(X_{T_{\Lambda_{\Box}
}}, J_{\Lambda_{\Box}})$ is zero, and any Ricci flat metric that is
Hermitian with respect to a deformation of $J_{\Lambda_{\Box}}$ is such that 
$\lambda^{J_{\Lambda_{\Box}}}(X_{T_{\Lambda_{\Box}}},[g]))=
\lambda^{J_{\Lambda_{\Box}}}(X_{T_{\Lambda_{\Box}}},g)=0$, with 
$W^{g}(\omega_g^{\sharp},\omega_g^{\sharp})=0$. Among these, the Yau 
Ricci flat K\"ahler metric $g^Y_{s_{\Box}}$ is the optimal realizer of the 
sigma invariant of the differentiable underlying manifold, with
$\mu_{g^Y_{s_{\Box}}}(X_{T_{\Lambda_{\Box}}})=2\pi^2$. 

Suppose that $(J,g)$ is any other almost Hermitian $J$ Yamabe metric on 
$(X_{T_{\Lambda_{\Box}}},J_{\Lambda})$, and consider a path 
$[0,1] \ni t \rightarrow (J_t,g_t)$ of almost Hermitian structures that 
connects $(J_{\Lambda_{\Box}}, g^Y_{s_{\Box}})$ with it, and the associated
lift $t\rightarrow g_t^Y$ of Yamabe metrics in $[g_t]$ 
of volume $\mu_{g_t}(X_{T_{\Lambda_{\Box}}})$. Since the surface is a manifold
of Kazdan-Warner type II, $s_{g_t^Y} \leq 0$, so the path of Yamabe metrics is 
smooth, though we cannot ascertain now that even $g_{s_{\Box}}^Y=
g_t^Y\mid_{t=0}$ has zero 
sectional curvature, which is only so away from the 16 
singular points of the underlying torus orbifold 
$T_{\Lambda_{\Box}}/(\mb{Z}/2)$ whose desingularization (replacing the
points by $-2$ curves) defines the surface. However, there is an associated
path $t \rightarrow (J_t, \tilde{g}_t)$ of almost Hermitian
metrics on the 2-to-1 covering torus $T_{\Lambda_{\Box}}$ of the orbifold 
that corresponds to $t \rightarrow (J_t,g_t)$. We let
$t \rightarrow \tilde{g}^Y_t$ be the associated path of Yamabe metrics. 
Since $\tilde{g}_t^Y\mid_{t=0}$ has zero sectional curvature, proceeding as 
in (a), $\tilde{g}^Y_t$ has zero sectional curvature for all $t$, so
$W^{\tilde{g}_t^Y}( \omega_{\tilde{g}^Y_t}^{\sharp}, 
\omega_{\tilde{g}^Y_t}^{\sharp})=0$.
By smoothness and the codimension of the $-2$ curves, this last condition 
passes to the metrics $g_t^Y$ themselves, so 
$[0,1]\ni t \rightarrow W^{g_t^Y}( \omega_{g^Y_t}^{\sharp}, 
\omega_{g^Y_t}^{\sharp})=0$. This fact used at $t=1$ implies that 
$\lambda^{J_{\Lambda_{\Box}}}(X_{T_{\Lambda_{\Box}}},[g]))=
\lambda(X_{T_{\Lambda_{\Box}}},g)\leq 0$. Thus, the $J$ sigma invariant of
any almost Hermitian conformal class on $X_{T_{\Lambda_{\Box}}}$ 
is at most zero, and  
$\lambda^{J_{\Lambda_{\Box}}}(X_{T_{\Lambda_{\Box}}})=0$, as desired.

(c) We let $\{ g_n\}$ be Schoen's sequence of Yamabe metrics in a sequence of
product conformal classes in $\mb{S}^1 \times \mb{S}^3$ \cite{sc2}. 
Since the product metrics are all locally conformally flat, 
$W^{g_n}(\omega_{g_n}^{\sharp}, \omega_{g_n}^{\sharp})=0$, and 
$3\lambda^{J_0}(\mb{S}^1\times        
\mb{S}^3,[g_n])=\lambda(\mb{S}^1\times \mb{S}^3,[g_n]) \nearrow 12
 \omega_4^{\frac{1}{2}}$. 
\qed

\end{document}